\tikzset{mynode/.style={draw,text width=4cm,align=center}
}
\tikzset{arrow coord style/.style={dotted, opacity=.8, thin}}
\tikzset{xcoord style/.style={
		font=\footnotesize,text height=1ex,
		inner sep = 0pt,
		outer sep = 0pt,
		text=black}}
\tikzset{ycoord style/.style={
		font=\footnotesize,text height=1ex,
		inner sep = 0pt,
		outer sep = 0pt,
		text=black}}
\newtheorem{theorem}{Theorem}[section]
\newtheorem{lemma}[theorem]{Lemma}
\theoremstyle{definition}
\newtheorem{definition}[theorem]{Definition}
\newtheorem{proposition}[theorem]{Proposition}
\newtheorem{corollary}[theorem]{Corollary}
\newtheorem{problem}[theorem]{Cauchy Problem}
\newtheorem{notation}[theorem]{Notation}
\newtheorem{assumption}[theorem]{Assumption}
\theoremstyle{remark}
\newtheorem{remark}[theorem]{Remark}
\numberwithin{equation}{section}
\newcommand{\BR}{{\mathbb R}}
\newcommand{\BC}{{\mathbb C}}
\newcommand{\BN}{{\mathbb N}}
\begin{document}

\title[Structurally damped $\sigma-$evolution equations with power-law memory]
{Structurally damped $\sigma-$evolution equations with power-law memory}

\author{{N.~Faustino}}
\address[\textsc{N.~Faustino}]{Department of Mathematics, University of Aveiro, Campus Universit\'ario de Santiago, 3810-193 Aveiro, Portugal}
\email{\href{mailto:nfaust@ua.pt}{nfaust@ua.pt}}
\address[\textsc{N.~Faustino}]{Center for R\&D in Mathematics and Applications (CIDMA), Department of Mathematics, University of Aveiro, Campus Universit\'ario de Santiago, 3810-193 Aveiro, Portugal}
\email{\href{mailto:nelson.faustino@ymail.com}{nelson.faustino@ymail.com}}
\thanks{\href{http://orcid.org/0000-0002-9117-2021}{N.~Faustino} was supported by The Center for Research and Development in Mathematics and Applications (CIDMA) through the Portuguese Foundation for Science and Technology (FCT), references UIDB/04106/2020 and UIDP/04106/2020.}

\author{{J.~Marques}}
\address[\textsc{J. Marques}]{Faculty of Economics and CeBER, University of Coimbra,
	Av. Dias da Silva, 165,
	3004-512 Coimbra,
	Portugal}
\email{\href{mailto:jmarques@fe.uc.pt}{jmarques@fe.uc.pt}}
\thanks{\href{https://orcid.org/0000-0003-3825-6389}{J.~Marques} was supported by Centre for Business and Economics Research (CeBER) through the Portuguese Foundation for Science and Technology (FCT), reference UIDB/05037/2020}

\date{\today}

\begin{abstract}
We
consider an integro-differential counterpart of the $\sigma-$evolution equation of the type
\[
\partial_t^2 u(t,x)+\mu (-\Delta)^{\frac{\sigma}{2}} \partial_t u(t,x)+(-\Delta)^\sigma u(t,x)=f(t,x),
\]
with $\sigma>0$ and $\mu>0$, that encodes memory of \textit{power-law} type. To do so, we replace the time derivatives $\partial_t$ and $\partial_t^2$ by the so-called Caputo-Djrbashian derivatives $\partial_t^\gamma$ of order $\gamma=\alpha$ and $\gamma=2\alpha$, respectively, and the inhomogeneous term $f(t,x)$ by the Riemann-Liouville integral $I^{\beta-2\alpha}_{0^+}f(t,x)$, whereby $0<\alpha\leq 1$ and $2\alpha\leq \beta<2\alpha+1$.
For the solution representation of the underlying Cauchy problems on the space-time $[0,T]\times \mathbb{R}^n$ we then consider a wide class of pseudo-differential operators $\displaystyle (-\Delta)^{\frac{\eta}{2}}E_{\alpha,\beta}\left(~-\lambda(-\Delta)^{\frac{\sigma}{2}} t^\alpha~\right)$, endowed by the  fractional Laplacian $-(-\Delta)^{\frac{\sigma}{2}}$ and the two-parameter Mittag-Leffler functions $E_{\alpha,\beta}$. 
On our approach we are also able to provide dispersive and Strichartz estimates for the solutions with the aid of decay properties of $E_{\alpha,\beta}(-z)$ ($z\in \mathbb{C}$) and the boundedness properties of the Hankel transform.

\end{abstract}

\subjclass[2020]{26A33, 33C10, 33E12, 35R11, 42B37, 44A20}

\keywords{Fractional differential equations, Mittag-Leffler functions, structural damping}

\maketitle

\tableofcontents

\section{Introduction}

Let us begin with a few facts concerning the theory of {\it structurally damped equations}.
After Russell's fundamental paper \cite{Russell86} on mathematical models of structural mechanics describing elastic beams much more attention has been paid rightly to the strongly damped plate/wave problems and to its further applications on the crossroads of control theory (cf.~\cite{AN10}), dynamical systems (cf.~\cite{Wang14}) and transmition problems (cf.
\cite{MDenkMK19}).
The current trends on {\it struturally damped equations}, exhibited on the contribution of Pham et al \cite{PMR15}, and more recently on D'Abbicco-Ebert's series of papers \cite{AE17,AE_21,AE22}, paved the way for considering space-fractional model problems such as the {inhomogeneous} $\sigma-$evolution equation
\begin{equation} 
	\label{StructuralDamping_integer}
	\partial_t^2 (t,x)+\mu (-\Delta)^{\frac{\sigma}{2}} \partial_t u(t,x)+(-\Delta)^\sigma u(t,x)=f(t,x).
\end{equation}

We refer the reader to the works of Carvalho et al (cf.~\cite{CCD08}) and Denk-Schnaubelt (cf.~\cite{DS15}) just to mention a few papers on analysis of PDEs towards struturally damped plate equations (case of $\sigma=2$) and damped wave equations (case of $\sigma=1$). Also, we refer the seminal work of Karch (cf.~\cite{karch2000selfsimilar}) for a general account on the asymptotic behaviour of structurally damped models.

We notice that the $\sigma-$evolution equation (\ref{StructuralDamping_integer}) describes the scenario where the higher frequencies exhibit a strongly damped behavior in comparison with the lower frequencies. Here, the fractional differential operator $(-\Delta)^{\frac{\gamma}{2}}$ (that will be defined later on Subsection \ref{FourierAnalysisSetup}) is used to model the damping term $(\gamma=\sigma)$ and the elastic term as well $(\gamma=2\sigma)$. Noteworthy, in the view of Carvalho et al (cf.~\cite{CCD08}) and Chill-Srivastava (cf.~\cite{CS05}), the model problem (\ref{StructuralDamping_integer}) brings also the possibility to study the well-posedness of problems carrying $L^p$ data ($1<p<\infty$), in stark constrast with the {\it weaker damping case} so that $\mu (-\Delta)^{\frac{\sigma}{2}} \partial_t u(t,x)$ $(\mu>0)$ resembles to a regularization of the weak damping term $\mu \partial_t u(t,x)$, in the limit $\sigma \rightarrow 0^+$.

Our aim here is to investigate a space-time-fractional counterpart of (\ref{StructuralDamping_integer}) that encodes memory effects, ubiquitous in natural phenomena (cf.~\cite{DWH13,Stinga22}). In our setting it will be replaced the time derivatives $\partial_t$ resp. $\partial_t^2$ by the so-called Caputo-Djrbashian derivatives $ \partial^{\gamma}_t$ of order $\gamma$, defined as
\begin{eqnarray}
	\label{CaputoDerivative} \partial^\gamma_t u(t,x):=\left\{\begin{array}{lll} \displaystyle
		\int_0^t\dfrac{(t-\tau)^{m-\gamma-1}}{\Gamma(m-\gamma)}\frac{\partial^m u(\tau,x)}{\partial \tau^m }d\tau &, m-1<\gamma <m
		\\ \ \\
		\displaystyle \frac{\partial^mu(t,x)}{\partial t^m} &, \gamma=m,
	\end{array}\right.
\end{eqnarray}
where $\Gamma(\cdot)$ denotes the Euler's Gamma function (see eq.~(\ref{GammaFunction}) of Subsection \ref{MittagLefflerFRAC}) and $m=\lfloor \gamma \rfloor+1$ ($\lfloor \gamma \rfloor$ stands for the integer part of $\gamma$). 

For the time-fractional counterpart of the inhomogeneous term $f(t,x)$, it will be adopted the Riemann-Liouville integral
\begin{eqnarray}
	\label{RiemannLiouville}  I^{\gamma}_{0^+} f(t,x):=\left\{\begin{array}{lll} \displaystyle
		\int_0^t\dfrac{(t-\tau)^{\gamma-1}}{\Gamma(\gamma)}f(\tau,x)d\tau &, 0<\gamma <1
		\\ \ \\
		\displaystyle f(t,x) &, \gamma=0.
	\end{array}\right.
\end{eqnarray}

We observe that for values of $\gamma\not \in \BN_0$ resp. $\gamma\neq 0$, the eqs. (\ref{CaputoDerivative}) and (\ref{RiemannLiouville}) always define operators with power-law memory. The underlying memory function, given by 
\begin{eqnarray}
	\label{MemoryFunction}
	\displaystyle g_{\nu}(t):=\dfrac{t^{\nu-1}}{\Gamma(\nu)}~~(0<\nu<1),
\end{eqnarray} is a probability density function over the interval $(0,\infty)$, converging to the delta function $\delta(t)$ in the limit $\nu\rightarrow 0^+$.

Some of the physical reasons for the choice of the Caputo-Djrbashian derivative (\ref{CaputoDerivative}) instead of the Riemann-Liouville derivative ${~}^{RL}\partial_t^\gamma:=\partial_t^mI^{m-\gamma}_{0^+}$ stems on its ubiquity in the generalized Langevin equations (cf.~\cite{LLL17}) and on certain limiting processes (cf. \cite{HairerEtAl18}). Despite their usefulness on real-world applications (cf.~\cite{ABM16,Almeida17}), the Caputo-Djrbashian derivative is suitable for modelling Cauchy problems for the two amongst many reasons:  
\begin{enumerate}
	\item they have similar properties to the time-derivatives;
	\item they remove the singularities at $t=0$.
\end{enumerate}

We refer to the book of Samko et al. (cf.~\cite{samko1993fractional}) for further details on the theory of Caputo-Djrbashian derivatives and Riemann-Liouville operators and Stinga's preprint \cite{Stinga22} for an abridged overview of it. For its applications on Cauchy problems, we refer to the books of Podlubny (cf.~\cite{Podlubny99}) and Kilbas et al. (cf.~\cite{KST06}). We also refer to \cite{KSVZ16,KSZ17,AEP19,LVaz20,DAbbiccoGirardi22} for recent applications on fractional diffusion equations, closely related to our approach.

\section{Discussion}\label{Discussion}

\subsection{Model Problems}

The main focus in this paper are the following two Cauchy problems, carrying the initial condition(s) $u_0(x)$ [and $u_1(x)$] and the inhomogeneous term $I^{\beta-2\alpha}_{0^+}f(t,x)$, fulfilling certain regularity conditions to be presented a posteriori. Here and elsewhere $I^{\beta-2\alpha}_{0^+}$ stands for the Riemann-Liouville integral, defined through eq. (\ref{RiemannLiouville}) for $\gamma=\beta-2\alpha$.

\begin{problem}\label{CP_parabolic}
	Given $0<\alpha\leq \frac{1}{2}$, $2\alpha\leq \beta< 2\alpha+1$, $\sigma>0$ and $\mu>0$, the function $u:[0,T]\times \BR^n\rightarrow \BR$ is a (weak) solution of
	\begin{equation*}
		\begin{cases}
			\displaystyle \partial^{2\alpha}_t u(t,x)+\mu (-\Delta)^{\frac{\sigma}{2}}\partial^\alpha_t u(t,x)+(-\Delta)^\sigma u(t,x)=I^{\beta-2\alpha}_{0^+}f(t,x)&,\mbox{in}~(0,T]\times \mathbb{R}^n\\ \ \\
			u(0,x)=u_0(x)~&,\mbox{in}~\mathbb{R}^n.
		\end{cases}
	\end{equation*}
\end{problem}

\begin{problem}\label{CP_hyperbolic}
	Given $\frac{1}{2}<\alpha\leq 1$, $2\alpha\leq\beta< 2\alpha+1$, $\sigma>0$ and $\mu>0$, the function $u:[0,T]\times \BR^n\rightarrow \BR$ is a (weak) solution of
	\begin{equation*}
		\begin{cases}
			\displaystyle 	\partial^{2\alpha}_t u(t,x)+\mu (-\Delta)^{\frac{\sigma}{2}}\partial^\alpha_t u(t,x)+(-\Delta)^\sigma u(t,x)=I^{\beta-2\alpha}_{0^+}f(t,x) &,\mbox{in}~(0,T] \times \mathbb{R}^n\\ \ \\
			u(0,x)=u_0(x)&,\mbox{in}~\mathbb{R}^n\\ \ \\
			\partial_t u(0,x)=u_1(x)&,\mbox{in}~\mathbb{R}^n.
		\end{cases}
	\end{equation*}
\end{problem}

The programme of studying Cauchy problems similar to {\bf Cauchy Problem \ref{CP_parabolic}} \& {\bf Cauchy Problem \ref{CP_hyperbolic}} has been started by Fujita in the former papers
\cite{Fujita90,FujitaII90}, where it has been shown that fractional diffusion and wave equations can be reformulated as integro-differential equations of D'Alembert type. In our case, the proposed model problems amalgamate the following properties, tactically investigated on several works (see e.g.  \cite{CCD08,DS15,PMR15,KSVZ16,KSZ17}):
\begin{enumerate}
	\item The damping part has ‘half of the order’ of the leading elastic term, which is in accordance with Russell's formulation \cite{Russell86};
	\item For values of $0<\alpha<1$, the time-fractional derivatives $\partial^{2\alpha}_t$ and $\partial^{\alpha}_t$ endow a fractional diffusion process;
	\item In the limit $\mu\rightarrow 0^+$, the {\bf Cauchy Problem \ref{CP_parabolic}} \& {\bf Cauchy Problem \ref{CP_hyperbolic}} approaches the {\it undamped case};
	\item For $\alpha=1$ and $1<\sigma<2$, our model problem possess mixed properties of
	the plate equation (case of $\sigma=2$) resp. the wave equation (case of $\sigma=1$) with structural damping;
	\item For $\alpha=1$, the asymptotic profile of the homogeneous part of the {\bf Cauchy Problem \ref{CP_hyperbolic}} changes accordingly to the size of the damping parameter $\mu>0$.
\end{enumerate}

From the point of view of potential applications, our model problem brings also the possibility to model anomalous diffusions and memory effects in a twofold way. Namely, the power law proportional to $t^{2\alpha}$ measures if our diffusive process is subdiffusive $(0<2\alpha<1)$ or superdiffusive ($1<2\alpha<2$). On both cases, the fractional derivatives $\partial_t^{\alpha}$ and $\partial_t^{2\alpha}$ are described in terms of the memory functions $g_{1-\alpha}(t)$ and $g_{2-2\alpha}(t)$, respectively (see eq. (\ref{MemoryFunction})).  

On the other hand, the choice of the parameter $\beta$, appearing on the Riemann-Liouville integral $I^{\beta-2\alpha}_{0^+}f(t,x)$, reflects if our {\it inhomogeneous term} exhibits a {\it memoryless pattern} or not: The limit case $\beta\rightarrow (2\alpha)^+$ guarantees that the shape of the memory function $g_{\beta-2\alpha}(t)$ is close to the shape of the delta function $\delta(t)$ so that the {\it inhomogeneous term} approaches 'lack of memory' for values of $\beta$ close to $2\alpha$. For values of $\beta$ a bit far from $2\alpha$, the ubiquity of memory effects is manifested by the polynomial shape of the graph of $g_{\beta-2\alpha}(t)$. Interesting to see, the particular choice $\beta=1+\alpha$ ($\beta-2\alpha=1-\alpha$), with $0<\alpha<1$, lead us always to diffusive processes with memory. 

Such subtle details encoded on our model may be seen as surprising and substantial from the vast majority of papers available on the literature. Up to our knowledge, the overlap between the memory effects of the fractional derivatives and the {\it inhomogeneous term} are not so often considered. 

\subsection{Solution Representation}

From an operational calculus perpective, the {\bf Cauchy Problem \ref{CP_parabolic}} \& {\bf Cauchy Problem \ref{CP_hyperbolic}} share the same features of Cauchy problems underlying to the time-fractional telegraph equations so that one can exploit, as in \cite{LVaz20}, Orsingher-Beghin's approach \cite{OrsingherBeghin04} to obtain the underlying solution representations (see also ref.~\cite{LLPMeersch19}). Namely, the 
closed-form representation for the solutions of both Cauchy problems -- to be obtained in Subsection \ref{SolutionCP_parabolic} and Subsection \ref{SolutionCP_hyperbolic}, respectively -- follows from the fact that Fourier-Laplace multiplier of $\displaystyle \partial^{2\alpha}_t +\mu (-\Delta)^{\frac{\sigma}{2}}\partial^\alpha_t+(-\Delta)^\sigma$, $$(s^\alpha,|\xi|^\sigma)\mapsto s^{2\alpha}+\mu |\xi|^{\sigma}s^\alpha+|\xi|^{2\sigma},$$
is a quadratic form on the variables $u=s^\alpha$ and $v=|\xi|^\sigma$.

Then, with the aid operational properties of the generalized Mittag-Leffler functions $E_{\alpha,\beta}^\gamma$ and $E_{\alpha,\beta}=E_{\alpha,\beta}^1$, highlighted on Section \ref{DispersiveSection} -- see, for instance, eq.~(\ref{LaplaceIdentityMittagLeffler}) and Theorem \ref{Properties_ML} -- we are able to recast closed representation formulae in terms of the wide class of pseudo-differential operators $\displaystyle (-\Delta)^{\frac{\eta}{2}}E_{\alpha,\beta}\left(~-\lambda (-\Delta)^{\frac{\sigma}{2}}t^\alpha~\right)$. From a semigroup perspective,
such class provides us fractional analogues of the resolvent operator considered e.g. on \cite{CCD08,CS05,DS15} for the case where the damping operator is a multiple square root of the elastic operator (cf.~\cite[Chapter VI]{engel2000one}).  

By the properties of the generalized Mittag-Leffler functions, it will be also shown in Subsection \ref{SolutionCP_parabolic} and Subsection \ref{SolutionCP_hyperbolic} that the choice of $I^{\beta-2\alpha}_{0^+}f(t,x)$ as {\it inhomogeneous term} turns out to be linked with the Fourier multipliers $\displaystyle t^{\beta-1}E_{\alpha,\beta}^{\gamma}(-\lambda|\xi|^\sigma t^{\alpha})$ ($\gamma=1,2$). From feasible physical descriptions of relaxation processes, such class of functions is somewhat expected, as highlighted on the seminal paper \cite{CapelasMainardiVaz11}; for an overview, we refer to \cite[Subsection 8.1.]{GKMaiR14}.

As will be noticed by the reader, the results enclosed on this paper can certainly be rewritten in terms of the language of fractional resolvent families (see e.g. \cite{LLPMeersch19} and the references therein) in a way that the Banach space framework, considered e.g. in the papers \cite{CS05,CCD08,DS15} and on the book \cite{engel2000one} can be faithfully generalized. But we have decided to use mostly pseudo-differential calculus, rather than operator theory, to keep the approach as self-contained as possible.

\subsection{Dispersive and Strichartz estimates}

Apart from the solution representation of these Cauchy problems being interesting from the point of view of spectral analysis and stochastic processes, we want to push forward the investigation of dispersive and Strichartz estimates, with the aim of measuring the size and the decay of the solutions of {\bf Cauchy Problem \ref{CP_parabolic}} \& {\bf Cauchy Problem \ref{CP_hyperbolic}}. The study of such type of estimates have become a cornerstone tool for many years now, mainly due to the Keel-Tao's breakthrough contribution to the topic (cf.~\cite{KTao98}); see also~\cite[Chapter 2.]{Tao06} for an overview.

The starting point will be the decay properties of the convolution kernel of $\displaystyle (-\Delta)^{\frac{\eta}{2}}E_{\alpha,\beta}\left(~-\lambda (-\Delta)^{\frac{\sigma}{2}}t^\alpha~\right)$, represented in terms of the Hankel transform of the Fourier multiplier $\displaystyle |\xi|^{\eta}E_{\alpha,\beta}\left(-\lambda |\xi|^{\sigma}t^\alpha\right)$. To do so, we will exploit the [optimal] decay estimates for the Mittag-Leffler functions $E_{\alpha,\beta}(-z)$, available on the book \cite{Podlubny99} (case of $z\in \mathbb{C}$) and on the papers \cite{Simon14,BS21} (case of $z\geq 0$) to our setting.

Concerning the dispersive estimates, the framework enclosed in Section \ref{DispersiveSection} is comparable to Pham et al and D'Abbicco-Ebert approaches (cf.~\cite{PMR15,AE_21}), who have investigated $L^p-L^q$ estimates for Cauchy problems similar to {\bf Cauchy Problem \ref{CP_hyperbolic}}.
Regarding the Strichartz estimates enclosed in the end of Section \ref{MainSection}, we have followed up some of the ideas addressed by Tao on the monograph \cite{Tao06}, properly adapted to our model problems. Mainly, a version of the so-called {\it Christ-Kiselev lemma} (cf.~\cite[Lemma 2.4]{Tao06}) is always required to ensure the boundedness of the inhomogeneous part of the solution representation of both Cauchy problems (see also \cite[THEOREM 1.2.]{KTao98}).

\section{Organization of the Paper}

We organize the remaining parts of this paper as follows:
\begin{itemize}
	\item In Section \ref{Preliminaries} we collect some results to be employed in the subsequent sections. Namely, some of the properties involving the Laplace transform and the generalized Mittag-Leffler functions $E_{\alpha,\beta}$ and $E_{\alpha,\beta}^\gamma$ are recalled in Subsection \ref{MittagLefflerFRAC}. And in Subsection \ref{FourierAnalysisSetup} we introduce some relevant definitions and properties to define properly the space-fractional differential operator $(-\Delta)^{\frac{\gamma}{2}}$ and the associated function spaces. Special emphasize will be given to the {\it homogeneous Sobolev spaces} $\dot{W}^{\gamma,p}$.
	\item In Section {\ref{DispersiveSection}} we deduce dispersive estimates for the wide class of pseudo-differential operators $(-\Delta)^{\frac{\eta}{2}}E_{\alpha,\beta}\left(~-\lambda(-\Delta)^{\frac{\sigma}{2}} t^\alpha~\right)$. The results proved in Subsection \ref{LpLqEstimates}, Subsection \ref{SobolevEstimates} and Subsection \ref{Overview} describe the decay properties of it. Here, we make use of the reformulation of the Fourier transform for radially symmetric functions in terms of the Hankel transform to obtain, in case of $\lambda\geq 0$, a sharp control of decay of the underlying convolution kernel, after application of Young's inequality. 
	\item
	In Section \ref{MainSection} we provide the main results of this paper. More precisely, we obtain in Subsection \ref{SolutionCP_parabolic} and Subsection \ref{SolutionCP_hyperbolic} closed-form representations for the {\bf Cauchy Problem \ref{CP_parabolic}} \& {\bf Cauchy Problem \ref{CP_hyperbolic}}, respectively. And in Subsection \ref{StrichartzSection} we turn to the study of Strichartz estimates on the {\it mixed-normed Lebesgue spaces} $L_t^sL_x^q([0,T]\times \BR^n)$ for both Cauchy problems, from the dispersive estimates obtained in Section \ref{DispersiveSection}.
	\item Finally, in Section \ref{Conclusions} we address the main conclusions of our approach. Some further remarks and open problems will also be highlighted.
\end{itemize}

\section{Preliminaries}\label{Preliminaries}

\subsection{Laplace Transform and Mittag-Leffler functions}\label{MittagLefflerFRAC}

For a real-valued function $g:[0,\infty)\rightarrow \mathbb{R}$ satisfying  $\displaystyle \sup_{t \in [0,\infty)} e^{-\omega t}| g(t)| <\infty$, the Laplace transform of $g$ is defined as
\begin{eqnarray}
	\label{LT} G(s):=\mathcal{L} [g(t)](s)=
	\int_{0}^{\infty} e^{-st} g(t) \, \, dt &  (\Re(s)>\omega).
\end{eqnarray}

Here, we notice that the function $G$ (the Laplace image of $g$) is analytic on the right-half plane $\displaystyle \left\{s\in \BC~:~\Re(s)>\omega\right\},$ whereby $\omega$ is chosen as the infimum of the values of $s$ for which the right-hand side of (\ref{LT}) is convergent. Further details may be found e.g. in \cite[Subsection 1.2]{KST06}.

For the inverse of the Laplace transform, defined as $g(t)=\mathcal{L}^{-1} [G(s)](t)$, the integration is performed along the strip $[c-i\infty,c+i\infty]$ ($c:=\Re(s)>\omega$), and whence, the resulting formula is independent of the choice of $c$. 

Associated to the Laplace transform is the {\it Laplace convolution formula} (cf.~\cite[(1.4.10), (1.4.12) of p. 19]{KST06} and \cite[(2.237), (2.238) of pp. 103--104]{Podlubny99}), defined for two functions $g$ and $h$ by 
\begin{eqnarray}
	\label{LaplaceConvolution} \mathcal{L}\left[\int_0^t h(t-\tau)g(\tau ) d\tau \right](s)=\mathcal{L}[h](s)\cdot\mathcal{L}[g](s).
\end{eqnarray}

Of foremost importance are also the following operational identities, involving the Caputo-Djrbashian derivative (\ref{CaputoDerivative}) and the Riemann-Liouville integral (\ref{RiemannLiouville}) respectively. Namely, the property
\begin{eqnarray}
	\label{LaplaceCaputo} \mathcal{L}[\partial_t^\gamma g(t)]=s^{\gamma}\mathcal{L}[g](s)-\sum_{k=0}^{m-1}s^{\gamma-k-1}\partial_t^k g(0) & \left(~m-1<\gamma\leq m~\right)
\end{eqnarray}
is fulfilled whenever $\mathcal{L}[g(t)](s)$, $\mathcal{L}[\partial_t^m g(t)](s)$ exist and
\begin{eqnarray*}
	\displaystyle \lim_{t \rightarrow \infty}\left(\partial_t\right)^k g(t)=0 &\left(~k=0,1,\ldots,m-1~\right),
\end{eqnarray*}
holds for every $g\in C^m(0,\infty)$ such that $\left(\partial_t\right)^mg\in L^1(0,b)$ for any $b>0$ (cf.~\cite[Lemma 2.24]{KST06}).
On the other hand, the identity
\begin{eqnarray}
	\label{LaplaceRLiouville}  \mathcal{L}[I_{0^+}^{\gamma} g(t)]=s^{-\gamma}\mathcal{L}[g](s) & (\Re(s)>\omega)
\end{eqnarray}
is always satisfied in case of $g\in L^1(0,b)$, for any $b>0$ (cf.~\cite[Lemma 2.14]{KST06}).

Next, we turn our attention to some of the special functions to be considered on the sequel. 
We introduce the two-parameter/three-parameter Mittag-Leffler functions $E_{\alpha,\beta}$ resp. $E_{\alpha,\beta}^\gamma$, as power series expansions of the form
\begin{eqnarray}
	\label{MittagLefflerTwo}E_{\alpha,\beta}(z)=\displaystyle \sum_{k=0}^{\infty} \frac{z^k}{\Gamma(\alpha k+\beta)} 
	&	(\Re(\alpha)>0),
	\\
	\label{MittagLefflerThree}E_{\alpha,\beta}^\gamma(z)=\displaystyle \sum_{k=0}^{\infty} \frac{(\gamma)_k}{k!}\frac{z^k}{\Gamma(\alpha k+\beta)}
	& (\Re(\alpha)>0),
\end{eqnarray}
where $\Gamma(\cdot)$ stands for the Euler's Gamma function
\begin{eqnarray}
	\label{GammaFunction} \Gamma(z)=\int_0^\infty e^{-t} t^{z-1}dt &(\Re(z)>0),
\end{eqnarray}
and $\displaystyle (\gamma)_k=\frac{\Gamma(\gamma+k)}{\Gamma(\gamma)}$ ($\Re(\gamma)>-k$) for the Pochhammer symbol.

The wide class of Mittag-Leffler functions, defined viz (\ref{MittagLefflerTwo}) and (\ref{MittagLefflerThree}), permits us to represent several transcendental and special functions (cf.~\cite[Subsection 1.8 \& Subsection 1.9]{KST06} and \cite[Chapter 3 -Chapter 5]{GKMaiR14}), such as:
\begin{itemize}
	\item[\bf (i)] the exponential function $$e^z=E_{1,1}(z);$$
	\item[\bf (ii)] the error function $$\displaystyle \mbox{erf}(z):=\frac{2}{\sqrt{\pi}}\int_0^z e^{-t^2}dt=1-e^{-z^2}E_{\frac{1}{2},1}(-z);$$
	\item[\bf (iii)] the hyperbolic functions
	\begin{eqnarray*}
		\cosh(z)=E_{2,1}(z^2) &\mbox{and} &\sinh(z)=zE_{2,2}(z^2);
	\end{eqnarray*}
	\item[\bf (iv)] the complex exponential function $$e^{iz}:=\cos(z)+i\sin(z)=E_{2,1}(-z^2)+izE_{2,2}(-z^2);$$
	\item[\bf (v)] the Kummer confluent hypergeometric function
	$$ \varPhi(\gamma;\beta;z):={~}_1F_1(\gamma;\beta,z) =\Gamma(\beta)E_{1,\beta}^\gamma(z);$$
	\item[\bf (vi)] the generalized hypergeometric function
	\begin{eqnarray*}
		{~}_1F_m\left(\gamma;\frac{\beta}{m},\frac{\beta+1}{m},\ldots,\frac{\beta+m-1}{m};\frac{z^m}{m^m}\right) =\Gamma(\beta)E_{m,\beta}^\gamma(z) &(m\in \BN).
	\end{eqnarray*}
\end{itemize}

Here, we would like to stress that $E_{\alpha,\beta}^\gamma$ corresponds to a faithful generalization of $E_{\alpha,\beta}$. Indeed, from the Pochhammer identity $(1)_k=k!$ it immediately follows that $E_{\alpha,\beta}^1$ ($\gamma=1$) coincides with $E_{\alpha,\beta}$.
For these functions, we have the following Laplace transform identity (cf.~\cite[eq. (5.1.26) of p. 102]{GKMaiR14} \& \cite[eq.~(1.9.13) of p.~47]{KST06})
\begin{eqnarray}
	\label{LaplaceIdentityMittagLeffler}
	\mathcal{L}[t^{\beta-1}E_{\alpha,\beta}^\gamma(\lambda t^\alpha)](s)=\frac{s^{\alpha \gamma-\beta}}{(s^\alpha-\lambda)^\gamma} & (\Re(s)>0;~\Re(\beta)>0;~|\lambda s^{-\alpha}|<1).
\end{eqnarray}

In the remainder part of this subsection we list some of the properties required for the sake of the rest of the paper. 
We start to point out the following decay estimates, involving the two-parameter Mittag-Leffler functions (\ref{MittagLefflerTwo}), that will be employed on the sequel of results to be proved in Section \ref{DispersiveSection}.
\begin{theorem}[cf.~\cite{Podlubny99}, \textsc{Theorem 1.6}]\label{EstimatesMittagLeffler}
	If $\alpha\leq 2$, $\beta$ is an arbitrary real number, $\theta$ is such that $\frac{\pi\alpha}{2}<\theta<\min\{\pi,\pi\alpha\}$ and $C$ is a real constant, then
	\begin{eqnarray*}
		|E_{\alpha,\beta}(z)|\leq \frac{C}{1+|z|} &(z \in\mathbb{C}~~;~~\theta\leq |\arg(z)|\leq \pi).
	\end{eqnarray*} 
\end{theorem}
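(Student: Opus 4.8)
The plan is to derive the asymptotic behaviour of $E_{\alpha,\beta}(z)$ as $|z|\to\infty$ inside the closed sector $\theta\leq|\arg(z)|\leq\pi$ from a Hankel-type contour representation, read off the leading $O(|z|^{-1})$ term, and then combine this with a trivial boundedness estimate on a compact piece to obtain the uniform bound $C/(1+|z|)$. Throughout one takes $0<\alpha<2$, which is the only range in which a valid $\theta$ (and hence a nonvacuous statement) exists.

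First I would recall the integral representation. Since $\frac{\pi\alpha}{2}<\theta$, one may fix an angle $\psi$ with $\frac{\pi\alpha}{2}<\psi<\theta$, and let $\gamma(\epsilon;\psi)$ be the Hankel contour consisting of the two rays $\arg(\zeta)=\pm\psi$, $|\zeta|\geq\epsilon$, together with the arc $|\zeta|=\epsilon$, $|\arg(\zeta)|\leq\psi$. Substituting the series (\ref{MittagLefflerTwo}) into the contour integral and applying Hankel's loop formula $\frac{1}{\Gamma(w)}=\frac{1}{2\pi i}\int_{\mathrm{Ha}}e^{\xi}\xi^{-w}\,d\xi$ termwise (the choice $\psi<\pi\alpha$ forces $\Re(\zeta^{1/\alpha})\to-\infty$ along the rays, so every integral converges absolutely) yields, for all $z$ in our sector $\psi\leq|\arg(z)|\leq\pi$,
\[
E_{\alpha,\beta}(z)=\frac{1}{2\pi\alpha i}\int_{\gamma(\epsilon;\psi)}\frac{e^{\zeta^{1/\alpha}}\,\zeta^{(1-\beta)/\alpha}}{\zeta-z}\,d\zeta.
\]
The crucial point is that for $|\arg(z)|<\psi$ an extra residue term $\frac{1}{\alpha}z^{(1-\beta)/\alpha}e^{z^{1/\alpha}}$ appears, but this exponentially large contribution is absent in our sector precisely because $\frac{\pi\alpha}{2}<\theta$ lets us keep $\psi<|\arg(z)|$.

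Next, for $z$ with $\theta\leq|\arg(z)|\leq\pi$ I would expand the Cauchy kernel as the finite geometric sum
\[
\frac{1}{\zeta-z}=-\sum_{j=0}^{N-1}\frac{\zeta^{j}}{z^{j+1}}-\frac{\zeta^{N}}{z^{N}(\zeta-z)},
\]
substitute, and identify each moment through the change of variable $\xi=\zeta^{1/\alpha}$ and Hankel's formula, namely $\frac{1}{2\pi\alpha i}\int_{\gamma(\epsilon;\psi)}e^{\zeta^{1/\alpha}}\zeta^{(1-\beta)/\alpha+j}\,d\zeta=\frac{1}{\Gamma(\beta-\alpha(j+1))}$. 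This produces the asymptotic expansion
\[
E_{\alpha,\beta}(z)=-\sum_{k=1}^{N}\frac{z^{-k}}{\Gamma(\beta-\alpha k)}+R_{N}(z),\qquad |R_{N}(z)|\leq C_{N}\,|z|^{-N-1}.
\]
The remainder bound uses that $z$ (angle $\geq\theta$) and $\zeta$ (angle $\leq\psi<\theta$) are uniformly angularly separated, so $|\zeta-z|\geq c(|\zeta|+|z|)$, together with the absolute convergence of $\int_{\gamma}|e^{\zeta^{1/\alpha}}|\,|\zeta|^{\Re((1-\beta)/\alpha)+N}\,|d\zeta|$. Taking $N=1$ gives $E_{\alpha,\beta}(z)=-\frac{z^{-1}}{\Gamma(\beta-\alpha)}+O(|z|^{-2})$, hence there exist $R>0$ and $C'>0$ with $|E_{\alpha,\beta}(z)|\leq C'/|z|$ throughout the sector whenever $|z|\geq R$.

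It remains to treat the compact region and assemble the estimate. Since (\ref{MittagLefflerTwo}) defines an entire function, $E_{\alpha,\beta}$ is continuous, hence bounded by some $M$, on the compact set $\{z:\theta\leq|\arg(z)|\leq\pi,\ |z|\leq R\}$. Choosing $C=\max\{2M,2C'\}$ makes $C/(1+|z|)$ dominate both $M$ for $|z|\leq R$ and $C'/|z|$ for $|z|\geq R$, which is the claimed bound. The main obstacle is the contour analysis behind the asymptotic expansion: one must justify deforming the loop so that the singularity $\zeta=z$ is separated from it while the rays stay in the half-plane $\Re(\zeta^{1/\alpha})<0$, and bound $R_N(z)$ uniformly in $\arg(z)$. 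It is exactly the constraint $\frac{\pi\alpha}{2}<\theta$ that permits such a contour and suppresses the exponentially large term $e^{z^{1/\alpha}}$, leaving only the polynomial decay.
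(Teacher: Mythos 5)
The paper does not prove this statement at all: it is imported verbatim from Podlubny's book (Theorem 1.6 there), so there is no internal proof to compare against. Your argument is precisely the standard one behind that theorem — the Hankel-contour representation of $E_{\alpha,\beta}$, the absence of the residue term $\frac{1}{\alpha}z^{(1-\beta)/\alpha}e^{z^{1/\alpha}}$ once $z$ lies outside the loop (which is what the hypothesis $\frac{\pi\alpha}{2}<\theta$ buys), the geometric expansion of the Cauchy kernel yielding the algebraic asymptotic series with remainder $O(|z|^{-N-1})$, and a compactness patch near the origin — and it is sound. Two small slips are worth correcting. First, what forces $\Re(\zeta^{1/\alpha})\to-\infty$ along the rays $\arg\zeta=\pm\psi$ is the \emph{lower} bound $\psi>\frac{\pi\alpha}{2}$ (so that $\cos(\psi/\alpha)<0$), not the upper bound $\psi<\pi\alpha$; the latter is what keeps the substituted contour $\xi=\zeta^{1/\alpha}$ a genuine Hankel loop so that the formula $\frac{1}{\Gamma(w)}=\frac{1}{2\pi i}\int_{\mathrm{Ha}}e^{\xi}\xi^{-w}d\xi$ applies. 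Second, the final constant $C=\max\{2M,2C'\}$ dominates both regimes only when $R=1$; for $|z|\leq R$ one needs $C\geq M(1+R)$ and for $|z|\geq R$ one needs $C\geq C'\left(1+\tfrac{1}{R}\right)$, so take $C=\max\left\{M(1+R),\,C'\left(1+\tfrac{1}{R}\right)\right\}$. Neither point affects the structure of the argument.
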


\begin{theorem}[cf.~\cite{Simon14}, {\bf Theorem 4.} \& \cite{BS21}, {\bf Proposition 4.}]\label{OptimalEstimatesMittagLeffler}
	The following optimal estimates are fulfilled by $E_{\alpha,\beta}(-z)$:
	\begin{eqnarray*}
		\dfrac{1}{1+\Gamma(1-\alpha)z} \leq E_{\alpha,1}(-z)\leq \dfrac{1}{1+\frac{1}{\Gamma(1+\alpha)}z} ~~ (z\geq0~;~0< \alpha\leq 1);\\
		\dfrac{1}{\left(1+\sqrt{\frac{\Gamma(1-\alpha)}{\Gamma(1+\alpha)}}z\right)^2}\leq \Gamma(\alpha) E_{\alpha,\alpha}(-z)\leq \dfrac{1}{\left(1+\sqrt{\frac{\Gamma(1+\alpha)}{\Gamma(1+2\alpha)}}z\right)^2} ~~ (z\geq0~;~0< \alpha\leq 1); \\
		\dfrac{1}{1+\frac{\Gamma(\beta-\alpha)}{\Gamma(\beta)}z} \leq \Gamma(\beta)E_{\alpha,\beta}(-z)\leq \dfrac{1}{1+\frac{\Gamma(\beta)}{\Gamma(\beta+\alpha)}z} ~~ (z\geq0;0<\alpha\leq 1;\beta>\alpha).
	\end{eqnarray*}
\end{theorem}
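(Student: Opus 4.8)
The plan is to treat the three pairs of inequalities in one stroke, via complete monotonicity together with convexity of a suitable power of the reciprocal. Write $g(z):=\Gamma(\beta)E_{\alpha,\beta}(-z)$ for $z\ge 0$, so that $g=E_{\alpha,1}(-z)$ when $\beta=1$, $g=\Gamma(\alpha)E_{\alpha,\alpha}(-z)$ in the second group, and $g=\Gamma(\beta)E_{\alpha,\beta}(-z)$ in the third. From the power series (\ref{MittagLefflerTwo}) one reads off $g(0)=1$, and by the classical results of Pollard and Schneider the map $z\mapsto g(z)$ is completely monotone on $[0,\infty)$ in exactly the range $0<\alpha\le 1$, $\beta\ge\alpha$ assumed in the statement; in particular $g>0$, and $g(z)=\int_0^\infty e^{-zt}\,d\mu(t)$ for a probability measure $\mu$. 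First I would record the two pieces of data that pin down the constants: the origin slope $-g'(0)=\Gamma(\beta)/\Gamma(\beta+\alpha)$, read off from (\ref{MittagLefflerTwo}), and the decay at infinity, read off from the standard asymptotic expansion $E_{\alpha,\beta}(-z)\sim \tfrac{1}{\Gamma(\beta-\alpha)}z^{-1}$ when $\beta>\alpha$, respectively $E_{\alpha,\alpha}(-z)\sim -\tfrac{1}{\Gamma(-\alpha)}z^{-2}$ in the borderline case $\beta=\alpha$.

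The core of the argument is then a reduction. Fix $p=1$ for the first and third groups (decay of order $z^{-1}$) and $p=\tfrac12$ for the second group (decay of order $z^{-2}$), and set $h:=g^{-p}$. A direct computation gives $h''=p\,g^{-p-2}\bigl[(p+1)(g')^2-g\,g''\bigr]$, so $h$ is convex on $[0,\infty)$ as soon as $(p+1)(g')^2\ge g\,g''$. Granting convexity of $h$, the two-sided estimate follows immediately: since $h$ lies above its tangent line at the origin one gets $h(z)\ge 1-p\,g'(0)\,z$, which upon inversion is precisely the upper bound for $g$; and since $h$ is convex with $h(z)/z$ tending to the finite slope dictated by the decay above, $h'$ increases up to that slope, whence $h(z)\le 1+(\text{slope})\,z$, which upon inversion is precisely the lower bound for $g$. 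A short manipulation with the Gamma recurrence $\Gamma(1+x)=x\Gamma(x)$ then matches these two slopes with the displayed constants $\tfrac{1}{\Gamma(1+\alpha)},\Gamma(1-\alpha)$ (first group), $\sqrt{\Gamma(1+\alpha)/\Gamma(1+2\alpha)},\sqrt{\Gamma(1-\alpha)/\Gamma(1+\alpha)}$ (second group) and $\tfrac{\Gamma(\beta)}{\Gamma(\beta+\alpha)},\tfrac{\Gamma(\beta-\alpha)}{\Gamma(\beta)}$ (third group).

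The hard part will be establishing the convexity of $h=g^{-p}$, i.e. the differential inequality $(p+1)(g')^2\ge g\,g''$: this does \emph{not} follow from complete monotonicity alone (which yields only $g,-g',g''\ge 0$) and must be extracted from the finer structure of the Mittag-Leffler function, either from the Laplace representation of $g$ together with a Cauchy--Schwarz estimate on the moments of $\mu$, or from the fractional relaxation equation satisfied by $t^{\beta-1}E_{\alpha,\beta}(-\lambda t^\alpha)$ through (\ref{LaplaceIdentityMittagLeffler}). A second delicate point is \emph{optimality}, and in particular which endpoint the upper bound touches: for the first and third groups the upper bound is tangent at $z=0$ (it matches the first moment), so convexity delivers the sharp constant directly; for the $E_{\alpha,\alpha}$ group, however, the optimal constant $\sqrt{\Gamma(1+\alpha)/\Gamma(1+2\alpha)}$ is strictly larger than the origin slope, so convexity alone yields only a valid but non-sharp upper bound, and its sharpness requires separately locating the true extremal point of $z\mapsto g(z)(1+cz)^{2}$. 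Should the direct convexity route prove too rigid, the fallback is to invoke the probabilistic representation of $E_{\alpha,\beta}(-z)$ through positive $\alpha$-stable laws used by Simon and Boudabsa--Simon, which simultaneously supplies complete monotonicity, the moment identities above, and the sharp constants.
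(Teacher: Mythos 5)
The first thing to say is that the paper contains no proof of this statement: it is imported as a black box from Simon \cite{Simon14} (Theorem 4, the $E_{\alpha,1}$ case) and Boudabsa--Simon \cite{BS21} (Proposition 4, the remaining cases), where the arguments run through the representation of $E_{\alpha,\beta}(-z)$ by positive $\alpha$-stable laws and stochastic comparison with Fr\'echet-type distributions. Any self-contained argument is therefore a ``different route'' by construction; the question is only whether yours closes. It does not.

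The central gap is that the entire analytic content of the theorem has been concentrated into the unproven differential inequality $(p+1)(g')^{2}\ge g\,g''$ (convexity of $h=g^{-p}$), and the tool you suggest for it points the wrong way. Writing $g(z)=\int_{0}^{\infty}e^{-zt}\,d\mu(t)$ and applying Cauchy--Schwarz to the factorization $te^{-zt}=e^{-zt/2}\cdot te^{-zt/2}$ gives $(g')^{2}\le g\,g''$, i.e.\ log-convexity of $g$ --- an inequality in the \emph{opposite} direction to the one you need, and there is no way to reverse it from the Bernstein representation alone (complete monotonicity of $g$ does not imply convexity of $1/g$ in general). So this step is not a routine verification but the whole theorem in disguise, and your declared fallback --- ``invoke the probabilistic representation \dots used by Simon and Boudabsa--Simon'' --- amounts to citing the very proofs you set out to replace. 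The surrounding scaffolding (values of $g(0)$, $g'(0)$, the $z^{-1}$ and $z^{-2}$ asymptotics, and the tangent-line/asymptotic-slope bookending of a convex $h$) is correct, but it is conditional on the missing lemma.

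A second, more instructive issue concerns the $E_{\alpha,\alpha}$ upper bound. Your tangent-line computation produces the constant $h'(0)=\tfrac12\,\Gamma(\alpha)/\Gamma(2\alpha)=\Gamma(1+\alpha)/\Gamma(1+2\alpha)$, and this is in fact the \emph{only} constant $c$ for which $(1+cz)^{-2}$ can dominate $\Gamma(\alpha)E_{\alpha,\alpha}(-z)$ to first order at $z=0$: validity near the origin forces $2c\le -g'(0)$, i.e.\ $c\le \Gamma(1+\alpha)/\Gamma(1+2\alpha)$. The square-rooted constant $\sqrt{\Gamma(1+\alpha)/\Gamma(1+2\alpha)}$ that you label as the sharp one exceeds this threshold whenever $\Gamma(1+\alpha)<\Gamma(1+2\alpha)$; at $\alpha=1$, for instance, the bound would assert $e^{-z}\le(1+z/\sqrt2)^{-2}$, which fails for all small $z>0$ by comparing derivatives at the origin. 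So rather than your method being ``valid but non-sharp'' there, you should trust your own Taylor expansion, and check the constant against the source \cite{BS21} before attempting to prove an inequality that a first-order computation at $z=0$ already rules out.
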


We finish this subsection with the next lemma that will be applied, in Section \ref{MainSection}, on the proof of the solution representation for both Cauchy problems, {\bf Cauchy Problem \ref{CP_parabolic}} resp. {\bf Cauchy Problem \ref{CP_hyperbolic}}. 
\newpage 
\begin{lemma}[cf.~\cite{GKMaiR14}, p. 99]
	\label{Properties_ML}
	Let $z\in \mathbb{C}$ be given.
	\begin{enumerate}
		\item[\bf (i)]
		If $\alpha,\beta,\gamma \in \BC$ are such that $\Re(\alpha)>0$, $\Re(\beta)>0$, $\Re(\beta-\alpha)>0$, then
		$$
		\displaystyle{zE_{\alpha,\beta}^\gamma(z)=E_{\alpha,\beta-\alpha}^\gamma(z)-E_{\alpha,\beta-\alpha}^{\gamma-1}(z)}.
		$$
		\item[\bf (ii)] If $\alpha,\beta,\gamma \in \BC$ are such that $\Re(\alpha)>0$, $\Re(\beta)>0$, $\alpha-\beta\notin \BN_0$, then
		$$
		\displaystyle{zE_{\alpha,\beta}(z)=E_{\alpha,\beta-\alpha}(z)-\frac{1}{\Gamma(\beta-\alpha)}}.
		$$
		\item[\bf (iii)] If $\alpha,\beta \in \BC$ are such that $\Re(\alpha)>0$, $\Re(\beta)>1$, then
		$$
		\displaystyle{ \alpha E_{\alpha,\beta}^2(z)=E_{\alpha,\beta-1}(z)-(1+\alpha-\beta)E_{\alpha,\beta}(z)}.
		$$
	\end{enumerate}
\end{lemma}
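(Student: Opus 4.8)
The plan is to prove all three identities by the same elementary device. Since $\Re(\alpha)>0$, the series (\ref{MittagLefflerTwo})--(\ref{MittagLefflerThree}) define entire functions of $z$, so they converge absolutely and locally uniformly and one may legitimately compare their Taylor coefficients term by term. The only analytic inputs I need are the functional equation $\Gamma(w+1)=w\,\Gamma(w)$ together with the Pochhammer relations $(\gamma)_k=\frac{\Gamma(\gamma+k)}{\Gamma(\gamma)}$ and $(1)_k=k!$; the hypotheses on $\alpha,\beta,\gamma$ serve only to guarantee that none of the Gamma factors appearing below meets a pole, so that every coefficient is well defined (e.g. $\Re(\beta-\alpha)>0$ in (i), $\alpha-\beta\notin\BN_0$ in (ii), and $\Re(\beta)>1$ in (iii)).

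For part (i) I would expand $zE_{\alpha,\beta}^\gamma(z)=\sum_{k\ge 0}\frac{(\gamma)_k}{k!}\frac{z^{k+1}}{\Gamma(\alpha k+\beta)}$ and reindex $k\mapsto k-1$, which turns the $k$-th denominator into $\Gamma(\alpha k+\beta-\alpha)$ and exhibits the left-hand side as a power series starting at $z^1$. Comparing with the coefficient of $z^k$ on the right-hand side, the claim collapses to the purely algebraic identity $(\gamma)_k-(\gamma-1)_k=k\,(\gamma)_{k-1}$ for $k\ge 1$ (and to $0=0$ at $k=0$). This follows immediately from $(\gamma)_k=\frac{\Gamma(\gamma+k)}{\Gamma(\gamma)}$ by factoring out $\frac{\Gamma(\gamma+k-1)}{\Gamma(\gamma)}$ and applying $\Gamma(\gamma+k)=(\gamma+k-1)\Gamma(\gamma+k-1)$ and $\Gamma(\gamma)=(\gamma-1)\Gamma(\gamma-1)$. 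Part (ii) is then the specialization $\gamma=1$ of part (i): since $(0)_k=0$ for $k\ge 1$ while $(0)_0=1$, the series $E_{\alpha,\beta-\alpha}^0(z)$ degenerates to its constant term $\frac{1}{\Gamma(\beta-\alpha)}$, and the hypothesis $\alpha-\beta\notin\BN_0$ is precisely what keeps $\Gamma(\beta-\alpha)$ finite. (Equivalently, (ii) can be obtained directly by the same reindexing and cancelling the $k=0$ term of $E_{\alpha,\beta-\alpha}$.)

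For part (iii) the key observation is that $(2)_k=(k+1)!$, so $\frac{(2)_k}{k!}=k+1$ and the $z^k$-coefficient of $\alpha E_{\alpha,\beta}^2(z)$ is $\frac{\alpha(k+1)}{\Gamma(\alpha k+\beta)}$. On the right-hand side I would use $\frac{1}{\Gamma(\alpha k+\beta-1)}=\frac{\alpha k+\beta-1}{\Gamma(\alpha k+\beta)}$ to place both $E_{\alpha,\beta-1}(z)$ and $E_{\alpha,\beta}(z)$ over the common denominator $\Gamma(\alpha k+\beta)$, reducing the statement to a single affine relation in $k$. Matching the linear term in $k$ fixes the multiple of $E_{\alpha,\beta-1}$ (equivalently the factor $\alpha$), while matching the constant term fixes the coefficient of $E_{\alpha,\beta}$, which I expect to come out as $1+\alpha-\beta$. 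These computations are all routine, so the only thing to watch is the bookkeeping of the index shifts and, above all, the exact value and \emph{sign} of the constant coefficient in (iii); before committing to the final form I would pin it down on the degenerate case $\alpha=1,\ \beta=2$, where $E_{1,2}^2(z)=E_{1,1}(z)=e^z$ and both sides must reduce to $e^z$.
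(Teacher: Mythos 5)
The paper offers no proof of this lemma --- it is quoted directly from \cite{GKMaiR14}, p.~99 --- so there is no argument of the paper's to compare against; your termwise comparison of Taylor coefficients is the natural and essentially canonical route, and the hypotheses on $\alpha,\beta,\gamma$ do play exactly the role you assign them. Parts (i) and (ii) are correct and complete as you present them: the reindexing reduces (i) to the identity $(\gamma)_k-(\gamma-1)_k=k\,(\gamma)_{k-1}$ for $k\ge 1$, which is exactly right, and (ii) follows either as the specialization $\gamma=1$ (using $(0)_0=1$ and $(0)_k=0$ for $k\ge 1$) or by the direct reindexing you mention.

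Part (iii), however, does not close as described, and the obstruction is not your method but the sign in the statement. Carrying out the computation you outline: the $z^k$-coefficient of $\alpha E_{\alpha,\beta}^2(z)$ is $\frac{\alpha(k+1)}{\Gamma(\alpha k+\beta)}$, while that of $E_{\alpha,\beta-1}(z)+c\,E_{\alpha,\beta}(z)$ over the common denominator is $\frac{\alpha k+\beta-1+c}{\Gamma(\alpha k+\beta)}$; matching the constant term in $k$ forces $c=1+\alpha-\beta$, i.e.
\[
\alpha E_{\alpha,\beta}^2(z)=E_{\alpha,\beta-1}(z)+(1+\alpha-\beta)E_{\alpha,\beta}(z),
\]
with a \emph{plus} sign --- the opposite of the displayed identity, which as printed is an equality of power series only when $\beta=\alpha+1$. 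Your proposed sanity check $\alpha=1$, $\beta=2$ is precisely the degenerate case $1+\alpha-\beta=0$ in which both signs coincide, so it cannot detect the discrepancy; testing instead $\alpha=1$, $\beta=3$ settles it, since $E_{1,3}^2(z)=\sum_{k\ge 0}\frac{(k+1)z^k}{(k+2)!}$ agrees with $E_{1,2}(z)-E_{1,3}(z)$ but not with $E_{1,2}(z)+E_{1,3}(z)$. So either finish the coefficient match and record the corrected sign, or be aware that a ``proof'' of the identity exactly as stated would necessarily contain an error.
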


\subsection{Fourier analysis, function spaces and associated operators}\label{FourierAnalysisSetup}

Let us now denote by $\mathcal{S}(\BR^n)$ the Schwartz space over $\BR^n$ and by $\mathcal{S}'(\BR^n)$ the \textit{space of tempered distributions} (dual space of $\mathcal{S}(\BR^n)$). 
Throughout this paper we also denote by $L^p$ ($1\leq p\leq \infty$) the standard Lebesgue spaces and by $\|\cdot\|_p$ its norm. We adopt the bracket notation $\langle \cdot,\cdot \rangle$ as the inner product underlying to the Hilbert space $L^2$.
The Fourier transform of $\varphi\in \mathcal{S}(\BR^n)$ is defined as (cf.~\cite[Chapter A]{Tao06})
\begin{equation}
	\label{FT} \widehat{\varphi}(\xi):=(\mathcal{F} \varphi)(\xi)=
	\int_{\BR^n} \varphi(x)e^{-ix \cdot \xi}  \, \, dx
\end{equation}
and the corresponding {\it inverse Fourier transform} as
\begin{equation}
	\label{FInv} \varphi(x):=(\mathcal{F}^{-1}\widehat{\varphi})(x)=\frac{1}{(2\pi)^{n}}
	\int_{\BR^n} \widehat{\varphi}(\xi)e^{ix \cdot \xi}  \, \, d\xi \, ,
\end{equation}
where $x\cdot \xi $ denotes the standard Euclidean inner product between $x,\xi\in \BR^n$.

We note that the action of the isomorphism $\mathcal{F}:\mathcal{S}(\BR^n)\longrightarrow \mathcal{S}(\BR^n)$ can be extended to $L^p-$spaces, using the fact that $\mathcal{S}(\BR^n)$ is a dense subspace of $L^p(\BR^n)$, for values of $1\leq p<\infty$, and to $\mathcal{S}'(\BR^n)$ via the duality relation
\begin{eqnarray*}
	\langle \widehat{\varphi},\psi \rangle=\langle \varphi,\widehat{\psi}\rangle, & \varphi\in \mathcal{S}(\BR^n), & \psi\in \mathcal{S}'(\BR^n).
\end{eqnarray*}

In particular, {\it the Plancherel identity}
\begin{eqnarray}
	\label{PlancherelId}
	\displaystyle  \frac{1}{{(2\pi)^{\frac{n}{2}}}}\|  \widehat{\varphi}\|_2 =\| \varphi\|_2
\end{eqnarray}
follows from the fact that the automorphism $\displaystyle \varphi\mapsto \frac{1}{{(2\pi)^{\frac{n}{2}}}}\widehat{\varphi}$ underlying to $\mathcal{S}(\BR^n)$ induces an unitary operator over $L^2(\BR^n)$.

The Fourier transform $(\ref{FT})$ and the convolution product over $\BR^n$:
\begin{eqnarray}
	\label{ConvolutionRn}(\varphi \ast \psi)(x)=\int_{\BR^n} \varphi(x-y) \psi(y)dy,
\end{eqnarray}
intertwined by the {\it Fourier convolution formula} (cf.~\cite[Theorem 5.8]{LiebLoss01})
\begin{eqnarray}
	\label{FourierConvolutionRn} \mathcal{F}(\varphi \ast \psi)(\xi)=\widehat{\varphi}(\xi) \widehat{\psi}(\xi)
\end{eqnarray}
share many interesting features of the $L^p-$spaces. Of special interest so far
is the Young's inequality (cf.~\cite[Theorem 4.2]{LiebLoss01})
\begin{eqnarray}
	\label{YoungIneq}\| \psi \ast \varphi\|_q \leq \left(\frac{\kappa_p\kappa_{r'}}{\kappa_q}\right)^n\|\psi \|_{r'} \| \varphi\|_p,
\end{eqnarray}
that holds for every $1\leq p,q,r'\leq \infty~$ such  that $\displaystyle ~\frac{1}{p}+\frac{1}{r'}=\frac{1}{q}+1.$
Here and elsewhere, $\displaystyle \kappa_s=\sqrt{{s^{\frac{1}{s}}}{(s')^{-\frac{1}{s'}}}}$ ($\displaystyle \frac{1}{s}+\frac{1}{s'}=1$) stands for the {\it sharp} constant appearing on the right-hand side of (\ref{YoungIneq}).

Despite the Caputo-Djrbashian derivative (\ref{CaputoDerivative}), the fractional differential operator $(-\Delta)^{\frac{\gamma}{2}}$ ($\gamma\geq 0$), defined viz the spectral formula
\begin{eqnarray}
	\label{RieszOperators}
	\mathcal{F}\left((-\Delta)^{\frac{\gamma}{2}}\varphi\right)(\xi)=|\xi|^\gamma \widehat{\varphi}(\xi),
\end{eqnarray}
or equivalenty, as $(-\Delta)^{\frac{\gamma}{2}}=\mathcal{F}^{-1}|\xi|^{\gamma}\mathcal{F}$,
is also required on the formulation of {\bf Cauchy Problem \ref{CP_parabolic}} \& {\bf Cauchy Problem \ref{CP_hyperbolic}}. Underlying to $(-\Delta)^{\frac{\gamma}{2}}$ are the 
{\it homogeneous Sobolev spaces} of order $\gamma\geq 0$,
\begin{eqnarray*}
	\label{SobolevHomogeneous}	\dot{W}^{\gamma,p}(\BR^n):=\left\{~\varphi\in \mathcal{S}'(\BR^n)~:~(-\Delta)^{\frac{\gamma}{2}}\varphi\in L^p(\BR^n)~\right\},
\end{eqnarray*}
induced by the seminorm $\displaystyle\varphi\mapsto \left\|~(-\Delta)^{\frac{\gamma}{2}}\varphi~\right\|_{p}$.

From now on, we will adopt mostly the $L^p-$notation when we are referring to the {\it homogeneous Sobolev space} of order zero (0), $\displaystyle \dot{W}^{0,p}(\BR^n)={L}^p(\BR^n)$. 

Also, the so-called {\it Riesz fractional integral operator} of order $\gamma$, defined for values of $0<\gamma<n$ by (cf.~\cite[Section 25]{samko1993fractional})
\begin{eqnarray}
	\label{RieszIntegral}(-\Delta)^{-\frac{\gamma}{2}}\varphi(x)=(R_\gamma\ast\varphi)(x),~\mbox{with}&\displaystyle R_\gamma(x)=2^{-\gamma}\pi^{-\frac{n}{2}}\frac{\Gamma\left(\frac{n-\gamma}{2}\right)}{\Gamma\left(\frac{\gamma}{2}\right)}\frac{1}{|x|^{n-\gamma}}
\end{eqnarray}
can also be represented in terms of its Fourier multiplier. Namely, there is (cf.~\cite[Theorem 25.1]{samko1993fractional})
\begin{eqnarray*}
	\mathcal{F}\left((-\Delta)^{-\frac{\gamma}{2}}\varphi\right)(\xi)=|\xi|^{-\gamma} \widehat{\varphi}(\xi), & \mbox{for} & 0<\gamma<n.
\end{eqnarray*}

On the sequel of main results to be proved in the end of Section \ref{MainSection}, it will also be necessary the notion of {\it mixed-normed Lebesgue spaces} $L^s_tL^q_x$. For values of $1\leq s<\infty$, we define $L^s_tL^q_x([0,T]\times \BR^n)$, with $0<T<\infty$, as the Banach space with norm
\begin{eqnarray}
	\label{mixedNorm}\| u\|_{L^s_tL^q_x([0,T]\times \BR^n)}=\left(\int_0^T \|u(t,\cdot)\|_q^s dt\right)^{\frac{1}{s}}& (1\leq q\leq \infty).
\end{eqnarray}

For $s=\infty$, the space $L^\infty_t L^q_x([0,T]\times \BR^n)$ is defined in terms of the norm
\begin{eqnarray}
	\label{mixedNormSup}\| u\|_{L^\infty_t L^q_x([0,T]\times \BR^n)}=\mbox{ess}\sup_{t\in [0,T]}\|u(t,\cdot)\|_q & (1\leq q\leq \infty).
\end{eqnarray}

\section{Dispersive Estimates}\label{DispersiveSection}

\subsection{Proof Strategy}\label{ProofStrategy}

In this section we shall deduce estimates for pseudo-differential operators of the type $\displaystyle (-\Delta)^{\frac{\eta}{2}}E_{\alpha,\beta}\left(~-\lambda(-\Delta)^{\frac{\sigma}{2}} t^\alpha~\right)$, represented through the pseudo-differential formula
\begin{eqnarray}
	\label{MittagLefflerOperator}
	\displaystyle	(-\Delta)^{\frac{\eta}{2}}E_{\alpha,\beta}\left(~-\lambda(-\Delta)^{\frac{\sigma}{2}} t^\alpha~\right)\varphi(x)=  \nonumber \\
	=\frac{1}{(2\pi)^n}\int_{\mathbb{R}^n}|\xi|^\eta E_{\alpha,\beta}(-\lambda|\xi|^\sigma t^\alpha)\widehat{\varphi}(\xi)e^{ix \cdot \xi}d\xi,
\end{eqnarray}
where $0< \alpha\leq 1$, $\beta>0$, $\sigma>0$, $\lambda \in \mathbb{C}$ and $\eta\in \mathbb{R}$.

Recall that the {\it Fourier convolution formula} (\ref{FourierConvolutionRn}) together with eq.~(\ref{ConvolutionRn}) states that eq. (\ref{MittagLefflerOperator}) can be rewritten as
\begin{eqnarray}
	\label{MittagLefflerConvolution}
	(-\Delta)^{\frac{\eta}{2}}E_{\alpha,\beta}\left(~-\lambda(-\Delta)^{\frac{\sigma}{2}} t^\alpha~\right)\varphi(x)=\displaystyle \left(~{\bm K}_{\sigma,n}^{\eta}(t,\cdot|\alpha,\beta,\lambda)\ast \varphi~\right)(x),
\end{eqnarray}
with \begin{eqnarray}
	\label{MittagLefflerKernel}
	{\bm K}_{\sigma,n}^{\eta}(t,x|\alpha,\beta,\lambda)=
	\frac{1}{(2\pi)^n}\int_{\mathbb{R}^n}|\xi|^\eta E_{\alpha,\beta}(-\lambda|\xi|^\sigma t^\alpha)e^{ix \cdot \xi}d\xi.
\end{eqnarray}

Then, for every $1\leq p,q,r' \leq \infty$ satisfying $\displaystyle \frac{1}{p}+\frac{1}{r'}=\frac{1}{q}+1$, the condition $E_{\alpha,\beta}\left(~-\lambda(-\Delta)^{\frac{\sigma}{2}} t^\alpha~\right)\varphi\in \dot{W}^{\eta,q}(\BR^n)$ is satisfied for every $\varphi\in L^p(\BR^n)$, whenever ${\bm K}_{\sigma,n}^{\eta}(t,\cdot|\alpha,\beta,\lambda)\in L^{r'}(\BR^n)$.

On the other hand, due to the fact that $\displaystyle \xi \mapsto |\xi|^\eta E_{\alpha,\beta}\left(-\lambda|\xi|^\sigma t^\alpha\right)$ is radially symmetric one can recast eq. (\ref{MittagLefflerKernel}) in terms of the Hankel transform of order $\nu$, $\widetilde{\mathcal{H}}_\nu$ (cf. \cite{DeCarli08}), defined for values of $\nu>-1$ by
\begin{eqnarray*}
	\widetilde{\mathcal{H}}_{\nu}\phi(\tau)=\int_{0}^\infty \phi(\rho)~(\tau\rho)^{-\nu}J_{\nu}(\tau\rho)\rho^{2\nu+1}d\rho & (\tau>0),
\end{eqnarray*}
where $J_\nu$ stands for the Bessel function of order $\nu$ (cf.~\cite[Subsection 1.7]{KST06}).

Namely, from the Fourier inversion formula for radially symmetric functions (cf. \cite[p. 485, Lemma 25.1]{samko1993fractional})
\begin{eqnarray*}
	\label{FourierInversionRadial}
	\int_{\BR^n} \phi(|\xi|)e^{i x\cdot \xi} d\xi =\frac{(2\pi)^{\frac{n}{2}}}{|x|^{\frac{n}{2}-1}} \int_{0}^{\infty} \phi(\rho) J_{\frac{n}{2}-1}(\rho|x|)~\rho^{\frac{n}{2}}d\rho
\end{eqnarray*}
it follows immediately that eq. (\ref{MittagLefflerKernel}) simplifies to
\begin{eqnarray}
	\label{MittagLefflerHankel}
	{\bm K}_{\sigma,n}^\eta(t,x|\alpha,\beta,\lambda)&=&\displaystyle \frac{1}{(2\pi)^{\frac{n}{2}}|x|^{\frac{n}{2}-1}} \int_{0}^{\infty} \rho^\eta E_{\alpha,\beta}(-\lambda\rho^\sigma t^\alpha) J_{\frac{n}{2}-1}(\rho|x|)~\rho^{\frac{n}{2}}d\rho \nonumber \\
	&=&\displaystyle \frac{1}{(2\pi)^{\frac{n}{2}}} \widetilde{\mathcal{H}}_{\frac{n}{2}-1}\left[~|x|^\eta E_{\alpha,\beta}(-\lambda|x|^\sigma t^\alpha)~\right].
\end{eqnarray}

Thereby, the representation formula (\ref{MittagLefflerHankel}) shows in turn that the condition ${\bm K}_{\sigma,n}^{\eta}(t,\cdot|\alpha,\beta,\lambda)\in L^{r'}(\BR^n)$ is assured by the boundedness of the Hankel transform $$\widetilde{\mathcal{H}}_{\frac{n}{2}-1}:L^r((0,\infty),\rho^{n-1}d\rho)\rightarrow L^{r'}((0,\infty),\rho^{n-1}d\rho),$$ for values of $1\leq r\leq 2\leq r'\leq \infty$ satisfying $\displaystyle \frac{1}{r}+\frac{1}{r'}=1$ -- also known as Fourier-Bessel transform (see e.g.~\cite{colzani1993equiconvergence} and the references therein). 

It is worth mentioning that the proof that $\widetilde{\mathcal{H}}_{\frac{n}{2}-1}$ is an isometric operator on $L^{2}((0,\infty),\rho^{n-1}d\rho)$ ($r=r'=2$) can be easily reached by the eq.~(\ref{MittagLefflerHankel}) and the {\it Plancherel identity} (\ref{PlancherelId}), highlighted on Subsection \ref{FourierAnalysisSetup}.
On the other hand, for $r=1$ and $r'=\infty$ the set of identities (cf.~\cite[eq.~(1.7.4)]{KST06}):
\begin{eqnarray*}
	J_{-\frac{1}{2}}(z)=\left(\frac{2}{\pi z}\right)^{\frac{1}{2}}\cos(z) &\mbox{and} & J_{\frac{1}{2}}(z)=\left(\frac{2}{\pi z}\right)^{\frac{1}{2}}\sin(z),
\end{eqnarray*}
together with the {\it Poisson integral representation} of $J_\nu$ (cf.~\cite[eq.~(1.7.5)]{KST06}):
\begin{eqnarray*}
	J_\nu(z)=\frac{\left(\frac{z}{2}\right)^\nu}{\sqrt{\pi}\Gamma(\nu+1)}\int_{-1}^1 (1-\rho^2)^{\nu-\frac{1}{2}}\cos(z\rho) d\rho& \left(~\Re(\nu)>-\frac{1}{2}~\right),
\end{eqnarray*}
shows in turn that the {\it supremum norm} $\displaystyle b_{\frac{n}{2}-1}:=\sup_{z\in (0,\infty)}\left|z^{1-\frac{n}{2}}J_{\frac{n}{2}-1}(z)\right|$ is bounded above, and hence
\begin{eqnarray*}
	\left|\widetilde{\mathcal{H}}_{\frac{n}{2}-1}\phi(\tau)\right|\leq b_{\frac{n}{2}-1} \|\phi\|_{L^1((0,\infty),\rho^{n-1}d\rho)}.
\end{eqnarray*}

Moreover, with the aid of the celebrated {\it Riesz-Thorin convexity theorem} we are able to obtain $L^r-L^{r'}$estimates, for values of $1\leq r\leq 2\leq r'\leq \infty$ satisfying $\displaystyle \frac{1}{r}+\frac{1}{r'}=1$, as an interpolation between $L^1-L^\infty$ and $L^2-L^2$ estimates. Namely, for $\displaystyle \frac{2}{r'}=2-\frac{2}{r}$, it can be shown that the norm of $\widetilde{\mathcal{H}}_{\frac{n}{2}-1}$ is bounded above by $\displaystyle \left(~b_{\frac{n}{2}-1}~\right)^{1-\frac{2}{r'}}$ (cf.~\cite[eq.~(2.5)]{DeCarli08}).

We would like to stress that the norm $\displaystyle \widetilde{h}_{\frac{n}{2}-1}(r,r'):=\left\|~\widetilde{\mathcal{H}}_{\frac{n}{2}-1}~\right\|_{L^r\rightarrow L^{r'}}$ cannot be computed accurately for $(r,r')=(1,\infty)$. Nevertheless, using the groundbreaking theorems of Beckner and Lieb (cf.~\cite{beckner1975inequalities,lieb1990gaussian}), it can be computed with precision in case where $1<r\leq 2$ as it was proved on De Carli's paper \cite{DeCarli08} (cf. \cite[Proposition 2.1.]{DeCarli08}).

In brief, with the aid of the mapping properties of the Hankel transform we are able to achieve wisely the underlying endpoint $L^{r'}-$estimates for the kernel function 
(\ref{MittagLefflerKernel}) from the endpoint $L^{r}-$estimates for the Fourier multiplier $|\xi|^\eta E_{\alpha,\beta}(-\lambda|\xi|^\sigma t^\alpha)$ of (\ref{MittagLefflerOperator}). 
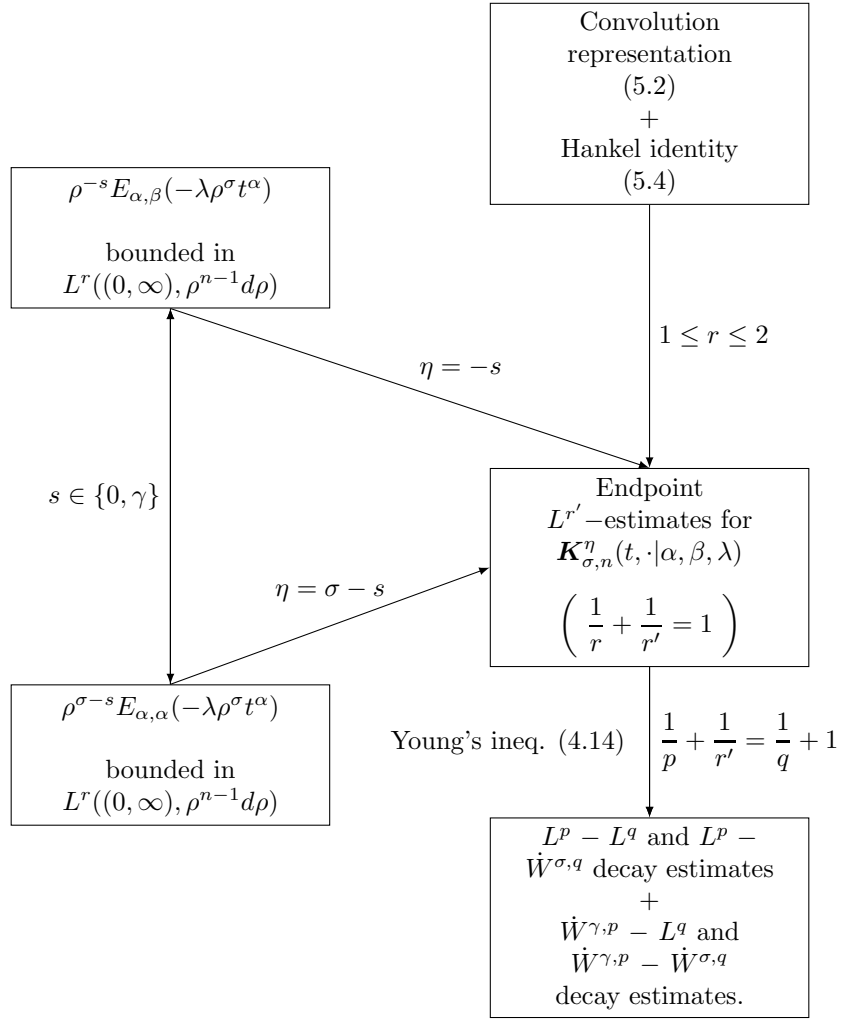
\begin{figure}\label{DispersivePicture}
	\begin{tikzpicture}
		\node[mynode] (v1){
			{$ \rho^{-s}E_{\alpha,\beta}(-\lambda\rho^\sigma t^\alpha)$} \\ \ \\ bounded in \\ {$L^r((0,\infty),\rho^{n-1}d\rho)$}};
		\node[mynode,below right=3.0cm of v1](v2) {
			{Endpoint $L^{r'}-${estimates for} }  \\ 
			{$\displaystyle{\bm K}_{\sigma,n}^{\eta}(t,\cdot|\alpha,\beta,\lambda)$}\\ \ \\{$\displaystyle \left(~\frac{1}{r}+\frac{1}{r'}=1~\right)$}};
		
		\node[mynode,below = 5.0cm of v1] (v4){
			{$\rho^{\sigma-s} E_{\alpha,\alpha}(-\lambda\rho^\sigma t^\alpha)$} \\ \ \\ bounded in \\ {$L^r((0,\infty),\rho^{n-1}d\rho)$}};
		\node[mynode,below=2.0cm of v2](v3) {
			{{$L^p-L^q$} and {$L^p-\dot{W}^{\sigma,q}$} decay estimates} \\ + \\ 
			{ {$\dot{W}^{\gamma,p}-L^q$} and  {$\dot{W}^{\gamma,p}-\dot{W}^{\sigma,q}$}\\  decay estimates.}};
		\node[mynode, above=3.5cm of v2](v5) {{Convolution representation } \\
			{(\ref{MittagLefflerConvolution})}\\ + \\	{Hankel identity} \\
			{(\ref{MittagLefflerHankel})} };
		\draw[-latex] (v1.south) -- node[auto,] {} (v4.north);
		\draw[-latex] (v4.north) -- node[right=0.2mm] {~} (v1.south);
		\draw[-latex] (v4.north) -- node[left=0.2mm] {{$s\in \{0,\gamma\}$}} (v1.south);
		\draw[-latex] (v1.south) -- node[auto,] {{$\eta=-s$}} (v2.north);
		\draw[-latex] (v4.north) -- node[above=2mm] {{$\eta=\sigma-s$}} (v2.west);
		\draw[-latex] (v5.south) -- node[auto,]{{$1\leq r\leq 2$}} (v2.north);
		\draw[-latex] (v2.south) -- node[left=2mm, align=center]{Young's ineq. (\ref{YoungIneq})} (v3.north);
		\draw[-latex] (v2.south) -- node[right=0.1mm, align=center]{{$\displaystyle \frac{1}{p}+\frac{1}{r'}=\frac{1}{q}+1$}} (v3.north);
	\end{tikzpicture}
	\caption{\textit{Tour de force} of the decay estimates to be obtained on {\bf Subsection \ref{LpLqEstimates}} and {\bf Subsection \ref{SobolevEstimates}}.}
\end{figure}

On the sequence of results to be proved on the subsequent subsections and elsewhere, the constants to be adopted are listed below to avoid cluttering up the notations.
\begin{notation}\label{SharpConstants}
	We denote by
	\begin{itemize}
		\item[\bf (a)] $\displaystyle \omega_{n-1}=\frac{2\pi^{\frac{n}{2}}}{\Gamma\left(\frac{n}{2}\right)}$ the $(n-1)-$dimensional measure of the sphere $\mathbb{S}^{n-1}$; 
		\item[\bf (b)] $\displaystyle \kappa_s=\sqrt{{s^{\frac{1}{s}}}{(s')^{-\frac{1}{s'}}}}$ (with $\displaystyle \frac{1}{s}+\frac{1}{s'}=1$) the sharp constant underlying to Young's inequality (\ref{YoungIneq});
		\item[\bf (c)] $\displaystyle \widetilde{h}_{\frac{n}{2}-1}(r,r'):=\left\|~\widetilde{\mathcal{H}}_{\frac{n}{2}-1}~\right\|_{L^r\rightarrow L^{r'}}$ (with $1\leq r\leq 2$ and $\displaystyle \frac{1}{r}+\frac{1}{r'}=1$) the operator norm of $$\widetilde{\mathcal{H}}_{\frac{n}{2}-1}:L^r((0,\infty),\rho^{n-1}d\rho)\rightarrow L^{r'}((0,\infty),\rho^{n-1}d\rho);$$
		\item[\bf (d)] 
		$C_{r,\sigma,n}^{(s)}(\alpha,\beta,\lambda)$ the constant obtained Lemma \ref{LrMittagLeffler};
		\item[\bf (e)] 
		$D_{r,\sigma,n}^{(s)}(\alpha,\lambda)$ the constant obtained in Lemma \ref{LrMittagLeffler2}.
	\end{itemize}
\end{notation}

\subsection{$L^p-L^q$ and $L^p-\dot{W}^{\sigma,q}$ decay estimates}\label{LpLqEstimates}
Our first aim is to establish under which conditions $E_{\alpha,\beta}\left(~-\lambda(-\Delta)^{\frac{\sigma}{2}} t^\alpha~\right)\varphi$ belongs to $L^q(\BR^n)$ or to $\dot{W}^{\sigma,q}(\BR^n)$.
In concrete, the following Mellin's integral identities
\begin{eqnarray}
	\label{MellinId}
	\displaystyle \int_{0}^\infty \frac{\rho^{n-rs-1}}{(1+b\rho^\sigma)^a}d\rho = \frac{\Gamma\left(\frac{ n-rs}{\sigma}\right)\Gamma\left(\frac{a\sigma+rs-n}{\sigma}\right)}{\sigma~\Gamma(a)}~b^{-\frac{n-rs}{\sigma}}\\ ~\displaystyle \left(~0<\Re\left(\frac{n-rs}{\sigma}\right)<\Re(a)\right),~\nonumber
	\\ \nonumber \\
	\label{MellinId2} 		\displaystyle \int_{0}^\infty \frac{\rho^{ n+r(\sigma-s)-1}}{(1+b\rho^\sigma)^{2r}}d\rho
	= \frac{\Gamma\left(\frac{n+r(\sigma-s)
		}{\sigma}\right)\Gamma\left(\frac{r(\sigma+s)-n}{\sigma}\right)}{\sigma~\Gamma(2r)}~b^{-\frac{n+r(\sigma-s)}{\sigma}}\\ \displaystyle \left(~0<\Re\left(\frac{n-rs}{\sigma}\right)<\Re(r)~\right),\nonumber
\end{eqnarray}
that yield from \cite[eq.~(1.4.65) of p.~24]{KST06}, together with Theorem \ref{EstimatesMittagLeffler} and Theorem \ref{OptimalEstimatesMittagLeffler} will be used on the proof of Lemma \ref{LrMittagLeffler} and Lemma \ref{LrMittagLeffler2} to ensure afterwards, on the proof of Proposition \ref{YoungIneqProposition}, that the kernel ${\bm K}_{\sigma,n}^\eta(\cdot,x|\alpha,\beta,\lambda)$, represented through eq. (\ref{MittagLefflerHankel}), belongs to $L^{r'}(\BR^n)$. The proof of both results is {enclosed on {\bf \ref{AppendixA}} and {\bf \ref{AppendixB}}}.

\begin{lemma}[see Appendix \ref{AppendixA}]\label{LrMittagLeffler}  	Let $0<\alpha\leq 1$ and $\beta>0$ be given. We assume that $\sigma>0$, $r\geq 1$ and $\displaystyle s\in \mathbb{R}$ satisfy one of the following conditions:
	\begin{itemize}
		\item[{\bf (i)}] $r=1$~~$\wedge$~~ $n-\sigma<s<n$;
		\item[{\bf (ii)}] $\displaystyle \max\left\{1,\frac{n}{\sigma}\right\} <r<\infty$~~$\wedge$~~$s=0$;  
		\item[{\bf (iii)}] $\displaystyle \max\left\{1,\frac{n}{\sigma+s}\right\} <r<\frac{n}{s}$~~$\wedge$~~$0<s<n$. 
	\end{itemize}	Then, one has	\begin{eqnarray}
		\label{IneqC}
		\displaystyle  \left(\int_0^\infty\left|~ \rho^{-s}E_{\alpha,\beta}(-\lambda \rho^\sigma t^{\alpha})~\right|^r \rho^{n-1}d\rho\right)^{\frac{1}{r}}
		\leq  \nonumber \\ \frac{\left(~C_{r,\sigma,n}^{(s)}(\alpha,\beta,\lambda) \right)^{\frac{1}{r}}}{\left|~\Gamma(\beta)~\right|}~t^{-\frac{\alpha}{\sigma}\left(\frac{n}{r}-s\right)},
	\end{eqnarray}
	with $C_{r,\sigma,n}^{(s)}(\alpha,\beta,\lambda)$ equals to
	\begin{eqnarray*}
		\begin{cases} \displaystyle \frac{\Gamma\left(\frac{ n-rs}{\sigma}\right)\Gamma\left(\frac{r(2\sigma+s)-n}{\sigma}\right)}{\sigma~\Gamma(2r)}\left(\sqrt{\frac{\Gamma(1+\alpha)}{\Gamma(1+2\alpha)}}~\lambda  \right)^{-\frac{n-rs}{\sigma}} \displaystyle~~~~,~\lambda\geq 0,\beta=\alpha
			\\ \ \\
			\displaystyle \frac{\Gamma\left(\frac{ n-rs}{\sigma}\right)\Gamma\left(\frac{r(\sigma+s)-n}{\sigma}\right)}{\sigma~\Gamma(r)}~\left(\frac{\Gamma(\beta)}{\Gamma(\beta+\alpha)}~\lambda \right)^{-\frac{n-rs}{\sigma}} ~~~~,\lambda\geq 0,\beta\in \{1\}\cup (\alpha,+\infty) \\ \ \\
			\displaystyle \frac{C^r~|\Gamma(\beta)|^r\Gamma\left(\frac{ n-rs}{\sigma}\right)\Gamma\left(\frac{r(\sigma+s)-n}{\sigma}\right)}{\sigma~\Gamma(r)}~|\lambda|^{-\frac{n-rs}{\sigma}}~,~\frac{\pi\alpha}{2}<\theta<\pi \alpha~~~~,\theta\leq |\arg(\lambda)|\leq \pi.
		\end{cases}
	\end{eqnarray*}
	Here, the constant $C>0$ -- that results from Theorem \ref{EstimatesMittagLeffler} -- is independent of $\alpha,\beta,r,s,\sigma,n$ and $\theta$.
\end{lemma}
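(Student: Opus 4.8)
The plan is to reduce the weighted $L^r$-bound to the elementary Mellin integrals (\ref{MellinId})--(\ref{MellinId2}) by first replacing the Mittag-Leffler factor with a sharp pointwise majorant, and then to treat the three branches of $C_{r,\sigma,n}^{(s)}(\alpha,\beta,\lambda)$ in parallel, distinguishing only the source of the pointwise estimate. Throughout, $t>0$ and $\rho>0$ are fixed, and I exploit that $\rho^\sigma t^\alpha>0$.

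First I would bound $|E_{\alpha,\beta}(-\lambda\rho^\sigma t^\alpha)|$ from above. For $\lambda\ge 0$ and $\beta=\alpha$ I would invoke the upper estimate of Theorem \ref{OptimalEstimatesMittagLeffler}, giving
\[
E_{\alpha,\alpha}(-\lambda\rho^\sigma t^\alpha)\le \frac{1}{\Gamma(\alpha)}\frac{1}{\left(1+\sqrt{\tfrac{\Gamma(1+\alpha)}{\Gamma(1+2\alpha)}}\,\lambda\rho^\sigma t^\alpha\right)^{2}};
\]
for $\lambda\ge 0$ and $\beta\in\{1\}\cup(\alpha,\infty)$ the corresponding upper estimate yields the single-power majorant $\Gamma(\beta)^{-1}\bigl(1+\tfrac{\Gamma(\beta)}{\Gamma(\beta+\alpha)}\lambda\rho^\sigma t^\alpha\bigr)^{-1}$ (the case $\beta=1$ being absorbed since $\Gamma(1)=1$); and for complex $\lambda$ with $\tfrac{\pi\alpha}{2}<\theta<\pi\alpha$ and $\theta\le|\arg\lambda|\le\pi$ I would use Theorem \ref{EstimatesMittagLeffler}, after checking that the Mittag-Leffler argument $-\lambda\rho^\sigma t^\alpha$ lies in the sector $\{\theta\le|\arg z|\le\pi\}$, to obtain $|E_{\alpha,\beta}(-\lambda\rho^\sigma t^\alpha)|\le C(1+|\lambda|\rho^\sigma t^\alpha)^{-1}$.

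Next I would raise each majorant to the power $r$, multiply by the weight $|\rho^{-s}|^r\rho^{n-1}=\rho^{n-rs-1}$, and integrate over $(0,\infty)$. In every case the resulting integral has the form $\int_0^\infty \rho^{n-rs-1}(1+b\rho^\sigma)^{-a}\,d\rho$ with $b$ a positive multiple of $t^\alpha$ — namely $b=\sqrt{\Gamma(1+\alpha)/\Gamma(1+2\alpha)}\,\lambda t^\alpha$ and $a=2r$ in the first branch, and $b=\tfrac{\Gamma(\beta)}{\Gamma(\beta+\alpha)}\lambda t^\alpha$ or $b=|\lambda|t^\alpha$ with $a=r$ in the remaining two — so that (\ref{MellinId}) applies verbatim. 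The hypotheses (i)--(iii) are precisely the translation of the convergence requirement $0<\frac{n-rs}{\sigma}<r$ of (\ref{MellinId}) with the most restrictive exponent $a=r$: one checks that $r=1$ forces $n-\sigma<s<n$, that $s=0$ forces $r>\max\{1,n/\sigma\}$, and that $0<s<n$ forces $\max\{1,\tfrac{n}{\sigma+s}\}<r<\tfrac{n}{s}$; these also guarantee convergence a fortiori for $a=2r$.

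Finally I would split the factor $b^{-\frac{n-rs}{\sigma}}=(\,\cdot\,\lambda)^{-\frac{n-rs}{\sigma}}(t^\alpha)^{-\frac{n-rs}{\sigma}}$, route the $\lambda$- and $\Gamma$-dependent part into $C_{r,\sigma,n}^{(s)}(\alpha,\beta,\lambda)$, and take the $r$-th root; since $\frac{\alpha}{\sigma}\cdot\frac{n-rs}{r}=\frac{\alpha}{\sigma}\bigl(\frac{n}{r}-s\bigr)$ this produces exactly the claimed factor $t^{-\frac{\alpha}{\sigma}(\frac{n}{r}-s)}$, while the prefactors $\Gamma(\alpha)^{-1}$, $\Gamma(\beta)^{-1}$, and $C\,|\Gamma(\beta)|$ collapse, after dividing by $|\Gamma(\beta)|$ as in the statement, to the three listed expressions for $C_{r,\sigma,n}^{(s)}$. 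The main obstacle I anticipate is the complex branch: one must argue that the stated sector condition on $\arg\lambda$ indeed places $-\lambda\rho^\sigma t^\alpha$ inside the validity cone of Theorem \ref{EstimatesMittagLeffler} uniformly in $\rho$ (all rays $\rho\mapsto -\lambda\rho^\sigma t^\alpha$ share the same argument, so the cone condition is automatically $\rho$-independent), and to keep the bookkeeping of the $\Gamma$-factors consistent so that the extra $|\Gamma(\beta)|^r$ appearing in the third branch of $C_{r,\sigma,n}^{(s)}$ correctly cancels the global $|\Gamma(\beta)|^{-1}$.
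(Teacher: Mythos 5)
Your proposal is correct and follows essentially the same route as the paper's proof in Appendix A: the Mittag-Leffler factor is replaced by the pointwise majorants from Theorem \ref{OptimalEstimatesMittagLeffler} (for $\lambda\ge 0$, with exponent $a=2r$ when $\beta=\alpha$ and $a=r$ otherwise) and Theorem \ref{EstimatesMittagLeffler} (for the sectorial complex case), after which the weighted integral is evaluated by the Mellin identity (\ref{MellinId}) under the convergence condition $0<\frac{n-rs}{\sigma}<a$, which is exactly what hypotheses {\bf (i)}--{\bf (iii)} encode. Your bookkeeping of the $t$-power, the $\Gamma(\beta)$ normalisation, and the $\rho$-independence of the sector condition all match the paper's argument.
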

\begin{lemma}[see Appendix \ref{AppendixB}]\label{LrMittagLeffler2}
	Let $0<\alpha\leq 1$ be given. We assume that $\sigma>0$, $r\geq 1$ and $\displaystyle s<n$ satisfy one of the following conditions:
	\begin{itemize}
		\item[{\bf (i)}] $r=1$~~$\wedge$~~ $n-\sigma<s<n$;
		\item[{\bf (ii)}] $\displaystyle \max\left\{1,\frac{n}{\sigma}\right\} <r<\infty$~~$\wedge$~~$s=0$;  
		\item[{\bf (iii)}] $\displaystyle \max\left\{1,\frac{n}{\sigma+s}\right\} <r<\frac{n}{s}$~~$\wedge$~~$0<s<n$. 
	\end{itemize}
	Then, one has
	\begin{eqnarray}
		\label{IneqD}		\left(\int_0^\infty\left|~\rho^{\sigma-s} E_{\alpha,\alpha}(-\lambda \rho^\sigma t^{\alpha})~\right|^r \rho^{n-1}d\rho\right)^{\frac{1}{r}}  \leq \nonumber \\ \frac{\left(~D_{r,\sigma,n}^{(s)}(\alpha,\lambda)~\right)^{\frac{1}{r}}}{\Gamma(\alpha)}~t^{-\frac{\alpha}{\sigma}\left(\frac{n}{r}+{\sigma-s}\right)}, 
	\end{eqnarray}
	with
	\begin{eqnarray*}
		D_{r,\sigma,n}^{(s)}(\alpha,\lambda)=\displaystyle \frac{\Gamma\left(\frac{n+r(\sigma-s)
			}{\sigma}\right)\Gamma\left(\frac{r(\sigma+s)-n}{\sigma}\right)}{\sigma~\Gamma(2r)}\left(\sqrt{\frac{\Gamma(1+\alpha)}{\Gamma(1+2\alpha)}}~\lambda\right)^{-\frac{n+r(\sigma-s)}{\sigma}} \displaystyle~~(\lambda\geq 0).
	\end{eqnarray*}
\end{lemma}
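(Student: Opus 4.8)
The plan is to follow the same template as Lemma \ref{LrMittagLeffler} in its $\lambda\geq 0$, $\beta=\alpha$ regime, now tailored to the shifted power $\rho^{\sigma-s}$. Since $\lambda\geq 0$ and $t>0$, the argument $z=\lambda\rho^\sigma t^\alpha$ of the Mittag-Leffler function is nonnegative, so Theorem \ref{OptimalEstimatesMittagLeffler} applies directly. Its lower bound guarantees $E_{\alpha,\alpha}(-\lambda\rho^\sigma t^\alpha)\geq 0$, which lets me drop the absolute value inside the integral, while its upper bound supplies the pointwise control
\[
E_{\alpha,\alpha}(-\lambda\rho^\sigma t^\alpha)\leq \frac{1}{\Gamma(\alpha)}\,\frac{1}{\left(1+\sqrt{\frac{\Gamma(1+\alpha)}{\Gamma(1+2\alpha)}}\,\lambda\rho^\sigma t^\alpha\right)^2}.
\]

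Substituting this into the $L^r$-integral and raising to the power $r$, I would obtain
\[
\int_0^\infty \rho^{n+r(\sigma-s)-1}\bigl(E_{\alpha,\alpha}(-\lambda\rho^\sigma t^\alpha)\bigr)^r\,d\rho\leq \frac{1}{\Gamma(\alpha)^r}\int_0^\infty \frac{\rho^{n+r(\sigma-s)-1}}{\left(1+b\rho^\sigma\right)^{2r}}\,d\rho,
\]
where $b:=\sqrt{\frac{\Gamma(1+\alpha)}{\Gamma(1+2\alpha)}}\,\lambda t^\alpha$. The squared denominator coming from the $E_{\alpha,\alpha}$ estimate is exactly what matches the exponent $2r$ in the Mellin identity (\ref{MellinId2}), which distinguishes this computation from that of Lemma \ref{LrMittagLeffler}. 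Applying (\ref{MellinId2}) with this value of $b$ evaluates the right-hand integral to
\[
\frac{\Gamma\!\left(\frac{n+r(\sigma-s)}{\sigma}\right)\Gamma\!\left(\frac{r(\sigma+s)-n}{\sigma}\right)}{\sigma\,\Gamma(2r)}\,b^{-\frac{n+r(\sigma-s)}{\sigma}}.
\]
Factoring $b^{-\frac{n+r(\sigma-s)}{\sigma}}=\left(\sqrt{\frac{\Gamma(1+\alpha)}{\Gamma(1+2\alpha)}}\lambda\right)^{-\frac{n+r(\sigma-s)}{\sigma}}\, t^{-\frac{\alpha(n+r(\sigma-s))}{\sigma}}$ isolates the constant $D_{r,\sigma,n}^{(s)}(\alpha,\lambda)$ together with the time decay; taking the $r$-th root and using $\frac{1}{r}\cdot\frac{\alpha(n+r(\sigma-s))}{\sigma}=\frac{\alpha}{\sigma}\left(\frac{n}{r}+\sigma-s\right)$ produces precisely the claimed bound (\ref{IneqD}).

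The only genuine point to check is that each of the hypotheses (i)--(iii) guarantees the convergence condition $0<\frac{n-rs}{\sigma}<r$ demanded by (\ref{MellinId2}). In case (i) ($r=1$) this reads $0<n-s<\sigma$, i.e.\ $n-\sigma<s<n$; in case (ii) ($s=0$) it reads $0<n<r\sigma$, i.e.\ $r>n/\sigma$; and in case (iii) it reads $0<n-rs<r\sigma$, i.e.\ $\frac{n}{\sigma+s}<r<\frac{n}{s}$. Thus the three stated conditions are exactly the admissibility requirements for the Mellin evaluation in this setting, and the integral is finite. No deeper obstacle arises: the argument is a direct composition of the pointwise Mittag-Leffler estimate with the Mellin identity, and is in fact simpler than Lemma \ref{LrMittagLeffler}, since only the nonnegative-argument regime $\lambda\geq 0$ is relevant and hence only the single upper bound for $E_{\alpha,\alpha}(-z)$ from Theorem \ref{OptimalEstimatesMittagLeffler} is invoked.
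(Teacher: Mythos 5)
Your argument is correct and follows essentially the same route as the paper's Appendix B: the pointwise optimal upper bound for $\Gamma(\alpha)E_{\alpha,\alpha}(-z)$ with the squared denominator, reduction to the Mellin identity (\ref{MellinId2}) with $b=\sqrt{\Gamma(1+\alpha)/\Gamma(1+2\alpha)}\,\lambda t^{\alpha}$, and verification that hypotheses (i)--(iii) are exactly the admissibility condition $0<\frac{n-rs}{\sigma}<r$. The exponent bookkeeping after taking the $r$-th root also matches the paper's computation.
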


\begin{remark}
	Due to the fact that the estimates depicted in Theorem \ref{OptimalEstimatesMittagLeffler} are indeed sharp (cf.~\cite{Simon14,BS21}), one can claim that the estimates obtained in Lemma \ref{LrMittagLeffler} and Lemma \ref{LrMittagLeffler2} are sharp whenever $\lambda\geq 0$.
	The strategy considered to compute the best upper bound, in terms of the norm of weighted Lebesgue space $L^r((0,\infty),\rho^{n-1}d\rho)$, relies essentially on a straightforward application of Mellin's integral identities (\ref{MellinId}) and (\ref{MellinId2}), respectively.
\end{remark}

\begin{proposition}\label{YoungIneqProposition}
	Let $0<\alpha\leq 1$ and $\beta>0$ be given.  
	We assume that $\sigma>0$ and $1\leq r\leq 2$ satisfy one of the following conditions:
	\begin{itemize}
		\item[{\bf (a)}] $\displaystyle \sigma>n$~~$\wedge$~~$r=1$;
		\item[{\bf (b)}] $\displaystyle \sigma>\frac{n}{2}$~~$\wedge$~~$\displaystyle \max\left\{1,\frac{n}{\sigma}\right\} <r\leq 2$,
	\end{itemize}
	and $1\leq p,q,r'\leq \infty$ are such that
	\begin{center}
		$\displaystyle \frac{1}{p}+\frac{1}{r'}=\frac{1}{q}+1$ and $\displaystyle \frac{1}{r}+\frac{1}{r'}=1$.
	\end{center}
	
	Then we find the following estimates to hold: 
	\begin{eqnarray}
		\label{YoungIneqHankel}
		\displaystyle \left\|~E_{\alpha,\beta}\left(-\lambda (-\Delta)^{\frac{\sigma}{2}} t^{\alpha}\right)\varphi~\right\|_q \leq \nonumber \\
		\leq \left(~\frac{\kappa_p\kappa_{r'}}{\kappa_q}\right)^n~  \frac{\widetilde{h}_{\frac{n}{2}-1}(r,r')~ \left(~\omega_{n-1}~\right)^{\frac{1}{r'}}\left(~C_{r,\sigma,n}^{(0)}(\alpha,\beta,\lambda)~\right)^{\frac{1}{r}}}{(2\pi)^{\frac{n}{2}}\left|~\Gamma(\beta)~\right|}~t^{-\frac{\alpha}{\sigma }\cdot \frac{n}{r}}~
		\|\varphi\|_p,
		\\ \nonumber \\ \nonumber \\
		\label{YoungIneqHankel2}\displaystyle \left\|~ (-\Delta)^{\frac{\sigma}{2}}E_{\alpha,\alpha}\left(-\lambda (-\Delta)^{\frac{\sigma}{2}} t^{\alpha}\right)\varphi\right\|_q\leq \nonumber \\
		\leq \left(\frac{\kappa_p\kappa_{r'}}{\kappa_q}\right)^n~ \frac{\widetilde{h}_{\frac{n}{2}-1}(r,r')~\left(~\omega_{n-1}~\right)^{\frac{1}{r'}}\left(~D_{r,\sigma,n}^{(0)}(\alpha,\lambda)~\right)^{\frac{1}{r}}}{(2\pi)^{\frac{n}{2}}\Gamma(\alpha)}~~t^{-\alpha-\frac{\alpha}{\sigma }\cdot \frac{n}{r}}~
		\|\varphi\|_p,
	\end{eqnarray}
	whereby $\omega_{n-1},\kappa_s,\widetilde{h}_{\frac{n}{2}-1}(r,r'),C_{r,\sigma,n}^{(0)}(\alpha,\beta,\lambda)$ and $D_{r,\sigma,n}^{(0)}(\alpha,\lambda)$ stand for the constants defined in {\bf Notation \ref{SharpConstants}}.
\end{proposition}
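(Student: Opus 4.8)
The plan is to read off both estimates from a single application of Young's inequality to the convolution representation (\ref{MittagLefflerConvolution}), after which the whole problem collapses to estimating the $L^{r'}(\BR^n)$-norm of the kernel ${\bm K}_{\sigma,n}^\eta(t,\cdot|\alpha,\beta,\lambda)$ in the two relevant cases $\eta=0$ (for (\ref{YoungIneqHankel})) and $\eta=\sigma$ with $\beta=\alpha$ (for (\ref{YoungIneqHankel2})). Concretely, I would start from (\ref{MittagLefflerConvolution}) and apply Young's inequality (\ref{YoungIneq}) with the exponent triple $(p,q,r')$ satisfying $\frac{1}{p}+\frac{1}{r'}=\frac{1}{q}+1$; this already produces the factor $\left(\frac{\kappa_p\kappa_{r'}}{\kappa_q}\right)^n$ and reduces the task to controlling $\left\|{\bm K}_{\sigma,n}^\eta(t,\cdot|\alpha,\beta,\lambda)\right\|_{r'}$.

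Next I would exploit the radiality of the kernel, which is explicit in the Hankel representation (\ref{MittagLefflerHankel}): since ${\bm K}_{\sigma,n}^\eta(t,x|\alpha,\beta,\lambda)=\frac{1}{(2\pi)^{n/2}}\,\widetilde{\mathcal{H}}_{\frac{n}{2}-1}\!\left[\rho^\eta E_{\alpha,\beta}(-\lambda\rho^\sigma t^\alpha)\right](|x|)$, passing to polar coordinates gives $\left\|{\bm K}_{\sigma,n}^\eta(t,\cdot|\alpha,\beta,\lambda)\right\|_{r'}=\frac{(\omega_{n-1})^{1/r'}}{(2\pi)^{n/2}}\left\|\widetilde{\mathcal{H}}_{\frac{n}{2}-1}[\phi]\right\|_{L^{r'}((0,\infty),\rho^{n-1}d\rho)}$, where $\phi(\rho)=\rho^\eta E_{\alpha,\beta}(-\lambda\rho^\sigma t^\alpha)$ and $\omega_{n-1}$ is the sphere measure from Notation \ref{SharpConstants}. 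Invoking the boundedness $\widetilde{\mathcal{H}}_{\frac{n}{2}-1}:L^r\to L^{r'}$ recorded in Subsection \ref{ProofStrategy} then replaces this by $\widetilde{h}_{\frac{n}{2}-1}(r,r')\,\|\phi\|_{L^r((0,\infty),\rho^{n-1}d\rho)}$, which is \emph{exactly} the quantity bounded in Lemma \ref{LrMittagLeffler} (taking $s=0$, so $\eta=-s=0$) and in Lemma \ref{LrMittagLeffler2} (taking $s=0$, so $\eta=\sigma-s=\sigma$). Applying these lemmas yields the constants $C_{r,\sigma,n}^{(0)}(\alpha,\beta,\lambda)$, $D_{r,\sigma,n}^{(0)}(\alpha,\lambda)$ together with the time factors $t^{-\frac{\alpha}{\sigma}\cdot\frac{n}{r}}$ and $t^{-\alpha-\frac{\alpha}{\sigma}\cdot\frac{n}{r}}$ required in (\ref{YoungIneqHankel}) and (\ref{YoungIneqHankel2}).

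The only genuine care lies in matching the hypotheses. Restricting to $s=0$, condition (i) of Lemma \ref{LrMittagLeffler} becomes $r=1\wedge\sigma>n$, which is precisely hypothesis (a); condition (ii) becomes $\max\{1,n/\sigma\}<r$, and intersecting it with $1\le r\le 2$ forces $n/\sigma<2$, i.e.\ $\sigma>n/2$, which is precisely hypothesis (b); the same bookkeeping applies verbatim to Lemma \ref{LrMittagLeffler2}. I would emphasise that the restriction $1\le r\le 2$ is \emph{not} inherited from the lemmas (which permit larger $r$) but is forced by the Young/Hankel step: the conjugacy $\frac{1}{r}+\frac{1}{r'}=1$ together with the mapping property $\widetilde{\mathcal{H}}_{\frac{n}{2}-1}:L^r\to L^{r'}$ requires $r'\ge 2$, hence $r\le 2$.

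I expect the main obstacle to be organisational rather than analytical: the substantive work (the sharp weighted $L^r$-bounds via the Mellin identities (\ref{MellinId})--(\ref{MellinId2}) and the Mittag-Leffler decay of Theorems \ref{EstimatesMittagLeffler}--\ref{OptimalEstimatesMittagLeffler}) is already quarantined in Lemma \ref{LrMittagLeffler} and Lemma \ref{LrMittagLeffler2}, and the Hankel constant $\widetilde{h}_{\frac{n}{2}-1}(r,r')$ is taken as given. Thus the proposition is essentially an assembly, and the delicate points are keeping the various multiplicative constants $(\omega_{n-1})^{1/r'}$, $\widetilde{h}_{\frac{n}{2}-1}(r,r')$, the Young factor and the $(2\pi)^{-n/2}$ Hankel normalisation in their correct places, and verifying that $\phi\in L^r$ (guaranteed by the lemmas) so that $\widetilde{\mathcal{H}}_{\frac{n}{2}-1}$ may legitimately be applied before invoking its $L^r\to L^{r'}$ bound. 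A minor caveat worth flagging is that $D_{r,\sigma,n}^{(0)}(\alpha,\lambda)$ is defined only for $\lambda\ge 0$, so (\ref{YoungIneqHankel2}) is proved in that range, whereas (\ref{YoungIneqHankel}) covers all three $\lambda$-regimes of Lemma \ref{LrMittagLeffler}.
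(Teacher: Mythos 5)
Your proposal is correct and follows essentially the same route as the paper's own proof: Young's inequality applied to the convolution representation (\ref{MittagLefflerConvolution}), passage to polar coordinates via the Hankel identity (\ref{MittagLefflerHankel}), the $L^r\to L^{r'}$ boundedness of $\widetilde{\mathcal{H}}_{\frac{n}{2}-1}$, and then Lemma \ref{LrMittagLeffler} and Lemma \ref{LrMittagLeffler2} with $s=0$. Your extra remarks on how conditions {\bf (a)} and {\bf (b)} match the lemmas' hypotheses and on the $\lambda\geq 0$ restriction for $D_{r,\sigma,n}^{(0)}(\alpha,\lambda)$ are consistent with, and slightly more explicit than, the paper's argument.
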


\begin{proof}
	By applying Young's inequality (\ref{YoungIneq}) to the convolution formula (\ref{MittagLefflerConvolution}), we obtain that
	$$
	\left\|~ (-\Delta)^{\frac{\eta}{2}}E_{\alpha,\beta}\left(~-\lambda(-\Delta)^{\frac{\sigma}{2}} t^\alpha~\right)\varphi~\right\|_q \leq \left(\frac{\kappa_p\kappa_{r'}}{\kappa_q}\right)^n\left\| ~{\bm K}_{\sigma,n}^\eta(t,\cdot|\alpha,\beta,\lambda)~\right\|_{p'}~\|\varphi \|_p 
	$$
	is satisfied for every $1\leq p,q,r'\leq \infty$ such that $\displaystyle \frac{1}{p}+\frac{1}{r'}=\frac{1}{q}+1$, whereby $\displaystyle \kappa_s$ stands for the sharp constant arising from Young's inequality (see {\bf Notation \ref{SharpConstants}}).
	
	Here, we observe that the reformulation of ${\bm K}_{\sigma,n}^\eta(t,\cdot|\alpha,\beta,\lambda)$, obtained in eq.~(\ref{MittagLefflerHankel}) in terms of the Hankel transform $\widetilde{\mathcal{H}}_{\frac{n}{2}-1}$, gives rise to the norm equality
	\begin{eqnarray*}
		\left\| ~{\bm K}_{\sigma,n}^\eta(t,\cdot|\alpha,\beta,\lambda)~\right\|_{r'}=\frac{1}{(2\pi)^{\frac{n}{2}}}\left(\int_{\BR^n}\left|\widetilde{\mathcal{H}}_{\frac{n}{2}-1}\left[~|x|^\eta E_{\alpha,\beta}\left(-\lambda |x|^\sigma t^{\alpha}\right)~\right]\right|^{r'} dx\right)^{\frac{1}{r'}}.
	\end{eqnarray*}
	
	Also, from eq. (\ref{MittagLefflerHankel}) we infer that $x\mapsto \left|\widetilde{\mathcal{H}}_{\frac{n}{2}-1}\left[~|x|^\eta E_{\alpha,\beta}\left(-\lambda |x|^\sigma t^{\alpha}\right)~\right]\right|^{r'}$ is radially symmetric. Thereby, the change of variables to spherical coordinates gives rise to 
	\begin{eqnarray*}
		\left\| ~{\bm K}_{\sigma,n}^\eta(t,\cdot|\alpha,\beta,\lambda)~\right\|_{r'}=\frac{\left(\omega_{n-1}\right)^{\frac{1}{r'}}}{(2\pi)^{\frac{n}{2}}}\left(\int_{0}^\infty \left|\widetilde{\mathcal{H}}_{\frac{n}{2}-1}\left[~\rho^\eta E_{\alpha,\beta}\left(-\lambda \rho^\sigma t^{\alpha}\right)~\right]\right|^{r'} \rho^{n-1}d\rho\right)^{\frac{1}{r'}},
	\end{eqnarray*}
	where $\displaystyle \omega_{n-1}$ denotes the $(n-1)-$dimensional measure of the sphere $\mathbb{S}^{n-1}$ (see {\bf Notation \ref{SharpConstants}}). 
	Then, under the assumption that one of the conditions {\bf (a)} or {\bf (b)} on the statement of Proposition \ref{YoungIneqProposition} is fulfilled, the inequality
	\begin{eqnarray}
		\label{LrNormKernel}\left\| ~{\bm K}_{\sigma,n}^\eta(t,\cdot|\alpha,\beta,\lambda)~\right\|_{r'}&\leq& \frac{\widetilde{h}_{\frac{n}{2}-1}(r,r')\left(~\omega_{n-1}~\right)^{\frac{1}{r'}}}{(2\pi)^{\frac{n}{2}}}\times \nonumber\\ &\times& \left(\int_0^\infty\left|~\rho^{\eta}E_{\alpha,\beta}(-\lambda \rho^\sigma t^{\alpha})~\right|^r\rho^{n-1}d\rho\right)^{\frac{1}{r}}	
	\end{eqnarray}
	results from the fact that $$\widetilde{\mathcal{H}}_{\frac{n}{2}-1}:L^r((0,\infty),\rho^{n-1}d\rho)\rightarrow L^{r'}((0,\infty),\rho^{n-1}d\rho)$$ is bounded for values of $1\leq r\leq 2\leq r'\leq \infty$ satisfying $\displaystyle \frac{1}{r}+\frac{1}{r'}=1$.
	This along with the norm estimates provided by Lemma \ref{LrMittagLeffler} \& Lemma \ref{LrMittagLeffler2} gives rise to the inequalities (\ref{YoungIneqHankel}) and (\ref{YoungIneqHankel2}). Indeed, the conditions $r=1$, $\sigma>n$ or
	\begin{eqnarray*}
		\displaystyle  \max\left\{1,\frac{n}{\sigma}\right\}<r\leq 2 ~~\wedge~~~\displaystyle \sigma >\dfrac{n}{2} ~&\left(~\displaystyle ~\max\left\{1,\frac{n}{\sigma}\right\}\geq \frac{n}{\sigma}~\right),
	\end{eqnarray*} fixed by assumptions {\bf (a)} and {\bf (b)} of Proposition \ref{YoungIneqProposition}, respectively, ensures that the inequality $\displaystyle r-\frac{n}{\sigma}>0$ is always fulfilled so that the boundedness of the right hand side of (\ref{LrNormKernel}) follows from the estimates obtained in Lemma \ref{LrMittagLeffler} (case of $s=0$ and $\eta=0$) and Lemma \ref{LrMittagLeffler2} (case of $s=0$ and $\eta=\sigma$).
\end{proof}

\begin{remark}
	The proof of Proposition \ref{YoungIneqProposition} encompasses part of the $L^p-L^q$ decay estimates obtained by Pham et al in \cite[Section 2.]{PMR15} (case of $\alpha=\beta=1$) and Kemppainen et al in \cite[Section 3.1]{KSZ17} (case of $\sigma=2$). We note here that our case roughly works with the same arguments as in Pham et al. with exception of a slightly technical condition: it is not necessary to assume, {\it a priori}, parity arguments encoded by the dimension of the Euclidean space $\BR^n$ (see \cite[pp.~573-576]{PMR15} for further comparisons).
\end{remark}

\begin{remark}
	Although our technique of proof, depicted throughout Subsection \ref{LpLqEstimates}, is complementary to Kemppainen et al technique (cf.~\cite[Section 5]{KSZ17}), it allows us to rid one of the majors stumbling blocks on the computation of $L^{r'}-$decay estimates for the fundamental solution (the kernel function (\ref{MittagLefflerKernel}) in our case), whose technicality of proofs heavily relies on the asymptotics of Fox-H functions (see \cite[Section 3]{KSZ17}).
\end{remark}

\subsection{$\dot{W}^{\gamma,p}-L^q$ and $\dot{W}^{\gamma,p}-\dot{W}^{\sigma,q}$ decay estimates}\label{SobolevEstimates}

Now, we turn our attention to the {\it homogeneous Sobolev spaces} discussed in the end of Subsection \ref{FourierAnalysisSetup}. Although the next proposition follows {\it mutatis mutandis} the technique of proof considered in Proposition \ref{YoungIneqProposition}, we provide below a self-contained proof of it for reader's~convenience.

\begin{proposition}\label{SobolevYoungProposition}
	Let $0<\alpha\leq 1$ and $\beta>0$ be given.
	We assume that $\sigma>0$, $0<\gamma<n$ and $1\leq r\leq 2$ satisfy one of the following conditions:
	\begin{itemize}
		\item[{\bf (i)}] $0<\gamma< n$~~$\wedge$~~$\sigma>n-\gamma$~~$\wedge$~~$r=1$;
		\item[{\bf (ii)}] $\displaystyle 0<\gamma< \frac{n}{2}$~~$\wedge$~~$\displaystyle \sigma>\frac{n}{2}-\gamma$~~$\wedge$~~$\displaystyle \max\left\{1,\frac{n}{\sigma+\gamma}\right\} <r\leq 2$; 
		\item[{\bf (iii)}] $\displaystyle \frac{n}{2}\leq \gamma< n$~~$\wedge$~~$\sigma>0$~~$\wedge$~~$\displaystyle \max\left\{1,\frac{n}{\sigma+\gamma}\right\} <r<\frac{n}{\gamma}$, 
	\end{itemize}
	and $1\leq p,q,r'\leq \infty$ are such that
	\begin{center}
		$\displaystyle \frac{1}{p}+\frac{1}{r'}=\frac{1}{q}+1$ and $\displaystyle \frac{1}{r}+\frac{1}{r'}=1$.
	\end{center}
	Then we find the following estimates to hold: 
	
	\begin{eqnarray}
		\label{NoSobolevYoungIneqHankel}
		\displaystyle \left\|~ E_{\alpha,\beta}\left(~-\lambda (-\Delta)^{\frac{\sigma}{2}} t^{\alpha}~\right)\varphi~\right\|_q\nonumber \\ \leq \left(\frac{\kappa_p\kappa_{r'}}{\kappa_q}\right)^n~\frac{\widetilde{h}_{\frac{n}{2}-1}(r,r')\left(~\omega_{n-1}~\right)^{\frac{1}{r'}}\left(~C_{r,\sigma,n}^{(\gamma)}(\alpha,\beta,\lambda)~\right)^{\frac{1}{r}}}{(2\pi)^{\frac{n}{2}}\left|~\Gamma(\beta)~\right|}~\times \nonumber  \\ \times ~t^{-\frac{\alpha}{\sigma}\left(\frac{n}{r}-\gamma\right)}~
		\left\|~(-\Delta)^{\frac{\gamma}{2}}\varphi~\right\|_p, \\ \nonumber \\ \nonumber \\
		\displaystyle \left\|~ (-\Delta)^{\frac{\sigma}{2}}E_{\alpha,\alpha}\left(~-\lambda (-\Delta)^{\frac{\sigma}{2}} t^{\alpha}~\right)\varphi~\right\|_q\leq \nonumber \\
		\label{NoSobolevYoungIneqHankel2} \leq \left(\frac{\kappa_p\kappa_{r'}}{\kappa_q}\right)^n~\frac{\widetilde{h}_{\frac{n}{2}-1}(r,r')\left(~\omega_{n-1}~\right)^{\frac{1}{r'}}\left(~D_{r,\sigma,n}^{(\gamma)}(\alpha,\beta,\lambda)~\right)^{\frac{1}{r}}}{(2\pi)^{\frac{n}{2}}\Gamma(\alpha)}~\times \nonumber \\ \times~ t^{-\alpha-\frac{\alpha}{\sigma}\left(\frac{n}{r}-\gamma\right)}
		\left\|~(-\Delta)^{\frac{\gamma}{2}}\varphi~\right\|_p.
	\end{eqnarray}
	\vspace{0.3cm}
	
	Hereby $\omega_{n-1},\kappa_s,\widetilde{h}_{\frac{n}{2}-1}(r,r'),C_{r,\sigma,n}^{(\gamma)}(\alpha,\beta,\lambda)$ and $D_{r,\sigma,n}^{(\gamma)}(\alpha,\lambda)$ stand for the constants defined in {\bf Notation \ref{SharpConstants}}.
\end{proposition}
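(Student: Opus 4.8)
The plan is to reduce both estimates to the $L^p$-framework already established in Proposition \ref{YoungIneqProposition}, by absorbing the homogeneous Sobolev seminorm into the source function. First I would exploit the fact that $(-\Delta)^{\frac{\gamma}{2}}$ acts as the Fourier multiplier $|\xi|^\gamma$ to split, on the Fourier side,
$$|\xi|^\eta E_{\alpha,\beta}(-\lambda|\xi|^\sigma t^\alpha)\widehat{\varphi}(\xi) = |\xi|^{\eta-\gamma}E_{\alpha,\beta}(-\lambda|\xi|^\sigma t^\alpha)\cdot|\xi|^{\gamma}\widehat{\varphi}(\xi).$$
Writing $\psi := (-\Delta)^{\frac{\gamma}{2}}\varphi$, the convolution representation (\ref{MittagLefflerConvolution})-(\ref{MittagLefflerKernel}) then recasts the operator as
$$(-\Delta)^{\frac{\eta}{2}}E_{\alpha,\beta}\left(-\lambda(-\Delta)^{\frac{\sigma}{2}}t^\alpha\right)\varphi(x) = \left({\bm K}_{\sigma,n}^{\eta-\gamma}(t,\cdot|\alpha,\beta,\lambda)\ast\psi\right)(x).$$
For (\ref{NoSobolevYoungIneqHankel}) I take $\eta = 0$, so the kernel is ${\bm K}_{\sigma,n}^{-\gamma}$; for (\ref{NoSobolevYoungIneqHankel2}) I take $\eta = \sigma$ with $\beta = \alpha$, so the kernel is ${\bm K}_{\sigma,n}^{\sigma-\gamma}$. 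This is precisely where the hypothesis $\varphi \in \dot{W}^{\gamma,p}(\BR^n)$ enters: it guarantees $\psi \in L^p(\BR^n)$, so Young's inequality (\ref{YoungIneq}) applies and produces the seminorm $\|(-\Delta)^{\frac{\gamma}{2}}\varphi\|_p = \|\psi\|_p$ on the right-hand side.

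Next, following the proof of Proposition \ref{YoungIneqProposition} \emph{mutatis mutandis}, I would bound $\|{\bm K}_{\sigma,n}^{\eta-\gamma}(t,\cdot|\alpha,\beta,\lambda)\|_{r'}$ via the Hankel representation (\ref{MittagLefflerHankel}), pass to spherical coordinates to extract the factor $(\omega_{n-1})^{\frac{1}{r'}}$, and invoke the boundedness of $\widetilde{\mathcal{H}}_{\frac{n}{2}-1}:L^r\to L^{r'}$ to control this by $\widetilde{h}_{\frac{n}{2}-1}(r,r')(\omega_{n-1})^{\frac{1}{r'}}(2\pi)^{-\frac{n}{2}}$ times the weighted $L^r((0,\infty),\rho^{n-1}d\rho)$ norm of $\rho^{\eta-\gamma}E_{\alpha,\beta}(-\lambda\rho^\sigma t^\alpha)$. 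For $\eta = 0$ this is $\rho^{-\gamma}E_{\alpha,\beta}(-\lambda\rho^\sigma t^\alpha)$, so Lemma \ref{LrMittagLeffler} with $s = \gamma$ supplies $C_{r,\sigma,n}^{(\gamma)}(\alpha,\beta,\lambda)$ together with the rate $t^{-\frac{\alpha}{\sigma}(\frac{n}{r}-\gamma)}$; for $\eta = \sigma$, $\beta = \alpha$ it is $\rho^{\sigma-\gamma}E_{\alpha,\alpha}(-\lambda\rho^\sigma t^\alpha)$, so Lemma \ref{LrMittagLeffler2} with $s = \gamma$ supplies $D_{r,\sigma,n}^{(\gamma)}(\alpha,\lambda)$ and the rate $t^{-\alpha-\frac{\alpha}{\sigma}(\frac{n}{r}-\gamma)}$. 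Combining with Young's inequality reproduces (\ref{NoSobolevYoungIneqHankel}) and (\ref{NoSobolevYoungIneqHankel2}).

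The step demanding genuine care, and the one I expect to be the main obstacle, is checking that the three hypotheses (i)-(iii) of the proposition are exactly the admissibility conditions of Lemma \ref{LrMittagLeffler} and Lemma \ref{LrMittagLeffler2} specialized to $s = \gamma$. Condition (i) ($r=1$, $\sigma > n-\gamma$) is literally case (i) of the Lemmas with $s = \gamma$. In the range $1 < r \leq 2$ the Lemmas require $\max\{1, n/(\sigma+\gamma)\} < r < n/\gamma$: when $0 < \gamma < \frac{n}{2}$ one has $n/\gamma > 2$, so $r < n/\gamma$ is automatic from $r \leq 2$ and the genuine restriction $n/(\sigma+\gamma) < 2$ reads $\sigma > \frac{n}{2} - \gamma$, yielding case (ii); when $\frac{n}{2} \leq \gamma < n$ the bound $n/\gamma \leq 2$ becomes binding, forcing $r < n/\gamma$, while $\sigma > 0$ already ensures $n/(\sigma+\gamma) < n/\gamma$, yielding case (iii). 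Since the decay estimates themselves are inherited wholesale from the two Lemmas, this parameter bookkeeping is the only point requiring real attention.
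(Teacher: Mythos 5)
Your proposal is correct and follows essentially the same route as the paper's own proof: the paper likewise rewrites the operator as ${\bm K}_{\sigma,n}^{\eta-\gamma}(t,\cdot|\alpha,\beta,\lambda)\ast(-\Delta)^{\frac{\gamma}{2}}\varphi$, applies Young's inequality, controls the kernel's $L^{r'}$-norm through the Hankel transform's $L^{r}\to L^{r'}$ boundedness, and invokes Lemma \ref{LrMittagLeffler} and Lemma \ref{LrMittagLeffler2} with $s=\gamma$. Your parameter bookkeeping matching conditions (i)--(iii) to the lemmas' admissibility ranges is accurate and in fact slightly more explicit than what the paper records.
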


\begin{proof}
	
	For the proof of the estimates (\ref{NoSobolevYoungIneqHankel}) and (\ref{NoSobolevYoungIneqHankel2}), we recall that the convolution representation of (\ref{MittagLefflerOperator}), provided by eq. (\ref{MittagLefflerConvolution}), can be reformulated as follows:
	\begin{eqnarray*}
		\label{MittagLefflerConvolutionSobolev}
		(-\Delta)^{\frac{\eta}{2}}E_{\alpha,\beta}\left(~-\lambda(-\Delta)^{\frac{\sigma}{2}} t^\alpha~\right)\varphi(x)=\displaystyle \left({\bm K}_{\sigma,n}^{\eta-\gamma}(t,\cdot|\alpha,\beta,\lambda)\ast (-\Delta)^{\frac{\gamma}{2}}\varphi\right)(x).
	\end{eqnarray*}
	
	Then, by a straightforward application Young's inequality (\ref{YoungIneq}) there holds
	\begin{eqnarray}
		\label{YoungIneqGamma}
		\left\|~ (-\Delta)^{\frac{\eta}{2}}E_{\alpha,\beta}\left(~-\lambda(-\Delta)^{\frac{\sigma}{2}} t^\alpha~\right)\varphi~\right\|_q \leq \nonumber \\ \leq \left(\frac{\kappa_p\kappa_{r'}}{\kappa_q}\right)^n \left\| ~{\bm K}_{\sigma,n}^{\eta-\gamma}(t,\cdot|\alpha,\beta,\lambda)~\right\|_{r'}~\left\|~(-\Delta)^{\frac{\gamma}{2}}\varphi~\right\|_p.
	\end{eqnarray}
	
	On the other hand, to prove the boundedness of $\left\| ~{\bm K}_{\sigma,n}^{\eta-\gamma}(t,\cdot|\alpha,\beta,\lambda)~\right\|_{r'}$
	we observe that in the view of the boundedness of the Hankel transform: $$\widetilde{\mathcal{H}}_{\frac{n}{2}-1}:L^r((0,\infty),\rho^{n-1}d\rho)\rightarrow L^{r'}((0,\infty),\rho^{n-1}d\rho),$$  for values of $1\leq r\leq 2\leq r'\leq \infty$ satisfying $\displaystyle \frac{1}{r}+\frac{1}{r'}=1$ (cf.~\cite[Proposition 2.1.]{DeCarli08}), and of Lemma \ref{LrMittagLeffler} \& Lemma \ref{LrMittagLeffler2}, the inequality 
	\begin{eqnarray*}
		\left\| ~{\bm K}_{\sigma,n}^{\eta-\gamma}(t,\cdot|\alpha,\beta,\lambda)~\right\|_{r'}&\leq& \frac{\widetilde{h}_{\frac{n}{2}-1}(r,r')\left(~\omega_{n-1}~\right)^{\frac{1}{r'}}}{(2\pi)^{\frac{n}{2}}}\times\\
		&\times&\left(\int_0^\infty\left|~\rho^{\eta-\gamma}E_{\alpha,\beta}(-\lambda \rho^\sigma t^{\alpha})~\right|^{r}\rho^{n-1}d\rho\right)^{\frac{1}{r}},
	\end{eqnarray*}
	is achieved for $s=\gamma$, with $0<\gamma<n$, whenever one of the conditions ${\bf (i)},{\bf (ii)}$ and ${\bf (iii)}$ assumed on the statement of {\bf Proposition \ref{SobolevYoungProposition}}, hold.
	
	Thus, the estimates (\ref{NoSobolevYoungIneqHankel}) and (\ref{NoSobolevYoungIneqHankel2}) are then immediate from the combination of the {\it sharp inequality} (\ref{YoungIneqGamma}) with the estimates provided by Lemma \ref{LrMittagLeffler} and Lemma \ref{LrMittagLeffler2}, respectively.
\end{proof}

\begin{remark}
	Despite the estimates obtained in Lemma \ref{LrMittagLeffler} and Lemma \ref{LrMittagLeffler2} (which are sharp in case of $\lambda\geq 0$), we also have adopted the sharp estimates obtained by Lieb and Loss in \cite[Theorem 4.2]{LiebLoss01} (Young's inequality) and the $L^r-L^{r'}$ estimates for the Hankel transform to achieve Proposition \ref{YoungIneqProposition} and, subsequently, Proposition \ref{SobolevYoungProposition}. Moreover, one can infer that such estimates are optimal for values of $1<r\leq 2$, accordingly to \cite[Proposition 2.1.]{DeCarli08}.
\end{remark}

\begin{remark}
	Worthy of mention, from the estimates obtained in Lemma \ref{LrMittagLeffler} and Lemma \ref{LrMittagLeffler2} it was not necessary to investigate the Mellin integral representation of the kernel (\ref{MittagLefflerKernel}) as a Fox-H function (see, for instance, the proof of \cite[Lemma 3.3]{KSZ17} for further comparisons). In our case, the mapping properties of the Hankel transform allowed us to downsize the computation of the [optimal] endpoint estimates from the decay of the generalized Mittag-Leffler functions $E_{\alpha,\beta}$ (cf. Theorem \ref{OptimalEstimatesMittagLeffler}). Such approach goes towards the optimal bound argument considered by Kemppainen et al in \cite[Remark 3.1]{KSVZ16}.
\end{remark}

\subsection{An amalgamation of dispersive estimates}\label{Overview}

Let us now amalgamate the main conclusions provided by Proposition \ref{YoungIneqProposition} and Proposition \ref{SobolevYoungProposition}.
For a sake of readability of the dispersive estimates to be considered on the sequel, we adopt from now on the shorthand notations 
\begin{eqnarray*}
	\|v(t,\cdot)\|_q\lesssim t^{\eta}~\|\varphi\|_p & \mbox{resp.}& \|v(t,\cdot)\|_q\lesssim t^{\eta}~\left\|~(-\Delta)^{\frac{\gamma}{2}}\varphi~\right\|_p 
\end{eqnarray*} 
to express the set of inequalities
\begin{eqnarray*}
	\|v(t,\cdot)\|_q\leq At^{\eta}~\|\varphi\|_p & \mbox{resp.} & \|v(t,\cdot)\|_q\leq B t^{\eta}~\left\|~(-\Delta)^{\frac{\gamma}{2}}\varphi~\right\|_p, 
\end{eqnarray*}
for some $A>0$, $B>0$ and $\eta \in \mathbb{R}$. The following regions of the
$Opq$ plane will also be adopted to abridge the exposition of the results.\newpage 
\begin{definition}
	For fixed values of $\varepsilon>0$ and $\nu>0$, we define the regions $\mathcal{R}_{1,2}(\varepsilon)$ and $\mathcal{R}_{3}(\nu,\varepsilon)$ of the $Opq$ plane in the following way:
	\begin{itemize}
		\item[~]{\bf Region $\mathcal{R}_{1,2}(\varepsilon)$}:\newline $(p,q)\in \mathcal{R}_{1,2}(\varepsilon)$ if, and only if, $\displaystyle \left(\frac{n}{p},\frac{n}{q}\right)$ satisfies the set of conditions
		\begin{eqnarray}
			\label{Ogamma12}
			\displaystyle \frac{n}{2}\leq  \frac{n}{p}<\min\left\{n,\varepsilon\right\} ~\wedge~ \displaystyle \max\left\{0,\frac{n}{p}-\min\left\{n,\varepsilon\right\}\right\}< \frac{n}{q}\leq\frac{n}{p}-\frac{n}{2}.		\end{eqnarray}
		\item[~]{\bf Region $\mathcal{R}_{3}(\nu,\varepsilon)$}:\newline $(p,q)\in \mathcal{R}_3(\nu,\varepsilon)$ if, and only if, $\displaystyle \left(\frac{n}{p},\frac{n}{q}\right)$ satisfies the set of conditions
		\begin{eqnarray}
			\label{Ogamma3}	\displaystyle \nu< \frac{n}{p}<\min\left\{n,\varepsilon\right\} ~\wedge~ \displaystyle \max\left\{0,\frac{n}{p}-\min\left\{n,\varepsilon\right\}\right\}< \frac{n}{q}\leq\frac{n}{p}-\nu.	\end{eqnarray}
	\end{itemize}
\end{definition}

The following assumption will also be required on the sequel:
\begin{assumption}\label{AssumptionLambda}
	We assume that the constant $\lambda\in \mathbb{C}$ satisfy one of the following conditions:
	\begin{itemize}
		\item[\bf (a)] $\lambda>0$;
		\item[\bf (b)] $\theta \leq |\arg(\lambda)|\leq \pi$ and $\frac{\pi\alpha}{2}<\theta<\pi \alpha$.
	\end{itemize}
\end{assumption}

\begin{corollary}[see Appendix {\ref{AppendixC}}]\label{DispersiveCorollary} Let $0<\alpha\leq 1$, $\beta>0$ be given. Assume further that {\bf Assumption \ref{AssumptionLambda}} holds for the constant $\lambda\in \mathbb{C}$.
	Then, for every $\varphi\in \mathcal{S}(\BR^n)$ there holds  \begin{eqnarray*}
		E_{\alpha,\beta}\left(~-\lambda (-\Delta)^{\frac{\sigma}{2}} t^{\alpha}~\right)\varphi \in L^q(\BR^n)&\wedge&E_{\alpha,\alpha}\left(~-\lambda (-\Delta)^{\frac{\sigma}{2}} t^{\alpha}~\right)\varphi \in \dot{W}^{\sigma,q}(\BR^n)
	\end{eqnarray*} 
	whenever the following assertions are then true:
	\begin{itemize}
		\item[\bf (1)] {\bf -- Proposition \ref{YoungIneqProposition}:} $\varphi\in L^p(\BR^n)$ and 	\begin{eqnarray}
			\label{CaseIEstimates}
			\left\|~ (-\Delta)^{\frac{\eta}{2}}E_{\alpha,\beta}\left(~-\lambda(-\Delta)^{\frac{\sigma}{2}} t^\alpha~\right)\varphi~\right\|_q \lesssim \nonumber \\  \lesssim \begin{cases}
				\displaystyle t^{-\frac{\alpha }{\sigma}\left(\frac{n}{p}-\frac{n}{q}\right)}~
				\|\varphi\|_p &, \eta=0\\  \ \\
				\displaystyle t^{-\alpha-\frac{\alpha }{\sigma}\left(\frac{n}{p}-\frac{n}{q}\right)}~
				\|\varphi\|_p &, \beta=\alpha~;~\eta=\sigma,
			\end{cases} 
		\end{eqnarray} 
		for tuples $(\sigma,p,q)$ satisfying the following conditions:
		\begin{itemize}
			\item[{\bf (a)}] {\bf $L^1-L^\infty$ \& $L^1-\dot{W}^{\sigma,\infty}$ estimates (first case):} 
			\begin{center}
				$\sigma>n$~~$\wedge$~~$(p,q)=(1,\infty)$;
			\end{center}
			\item[{\bf (b)}] {\bf $L^p-L^q$ \& $L^p-\dot{W}^{\sigma,q}$ estimates (second case):} 
			\begin{center}
				$\displaystyle \sigma>\frac{n}{2}$~~$\wedge$~~$(p,q)\in \mathcal{R}_{1,2}(\sigma)$~[~see eq. (\ref{Ogamma12})~].
			\end{center}
		\end{itemize}
		\item[\bf (2)] {\bf -- Proposition \ref{SobolevYoungProposition}:} $\varphi\in \dot{W}^{\gamma,p}(\BR^n)$  and \begin{eqnarray}
			\label{CaseIVIIIEstimates}
			\left\|~ (-\Delta)^{\frac{\eta}{2}}E_{\alpha,\beta}\left(~-\lambda(-\Delta)^{\frac{\sigma}{2}} t^\alpha~\right)\varphi~\right\|_q \lesssim \nonumber \\  \lesssim \begin{cases}
				\displaystyle t^{-\frac{\alpha}{\sigma}\left(\frac{n}{p}-\frac{n}{q}-\gamma\right)}~
				\left\|~(-\Delta)^{\frac{\gamma}{2}}\varphi~\right\|_p &, \eta=0\\  \ \\
				\displaystyle t^{-\alpha-\frac{\alpha}{\sigma}\left(\frac{n}{p}-\frac{n}{q}-\gamma\right)}
				\left\|~(-\Delta)^{\frac{\gamma}{2}}\varphi~\right\|_p &, \beta=\alpha~;~\eta=\sigma,
			\end{cases}
		\end{eqnarray} 	
		for tuples $(\sigma,\gamma,p,q)$ satisfying the following conditions:
		\begin{itemize}
			\item[{\bf (i)}] {\bf $\dot{W}^{\gamma,1}-L^\infty$ \& $\dot{W}^{\gamma,1}-\dot{W}^{\sigma,\infty}$ estimates (first case):} 
			\begin{center}
				$0<\gamma< n$~~$\wedge$~~$\sigma>n-\gamma$~~$\wedge$~~$(p,q)=(1,\infty)$.
			\end{center}
			\item[{\bf (ii)}] {\bf $\dot{W}^{\gamma,p}-L^p$ \& $\dot{W}^{\gamma,p}-\dot{W}^{\sigma,q}$ estimates (second case):} 
			\begin{center}
				$\displaystyle 0<\gamma< \frac{n}{2}$~~$\wedge$~~$\displaystyle \sigma>\frac{n}{2}-\gamma$~~$\wedge$~~$(p,q)\in \mathcal{R}_{1,2}(\sigma+\gamma)$\newline  [~see eq. (\ref{Ogamma12})~].
			\end{center}
			\item[{\bf (iii)}]{\bf $\dot{W}^{\gamma,p}-L^p$ \& $\dot{W}^{\gamma,p}-\dot{W}^{\sigma,q}$ estimates (third case):} 
			\begin{center}
				$\sigma>0$~~$\wedge$~~$\displaystyle \frac{n}{2}\leq \gamma\leq n$~~$\wedge$~~$(p,q)\in \mathcal{R}_{3}(\gamma,\sigma+\gamma)$ \newline [~see eq. (\ref{Ogamma3})~]. 
			\end{center}
		\end{itemize}
	\end{itemize}
\end{corollary}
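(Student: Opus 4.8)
The plan is to obtain Corollary \ref{DispersiveCorollary} as a verbatim translation of Proposition \ref{YoungIneqProposition} and Proposition \ref{SobolevYoungProposition} from the Hankel indices $(r,r')$ into the exponents $(p,q)$ that parametrize the $Opq$ plane. The departure point is the pair of admissibility relations $\frac{1}{p}+\frac{1}{r'}=\frac{1}{q}+1$ and $\frac{1}{r}+\frac{1}{r'}=1$, imposed throughout both propositions. Eliminating $r'$ yields the key identity $\frac{1}{r}=\frac{1}{p}-\frac{1}{q}$, equivalently $\frac{n}{r}=\frac{n}{p}-\frac{n}{q}$. First I would substitute this into the decay exponents of (\ref{YoungIneqHankel})--(\ref{YoungIneqHankel2}) and (\ref{NoSobolevYoungIneqHankel})--(\ref{NoSobolevYoungIneqHankel2}): the factor $t^{-\frac{\alpha}{\sigma}\cdot\frac{n}{r}}$ becomes $t^{-\frac{\alpha}{\sigma}\left(\frac{n}{p}-\frac{n}{q}\right)}$ and $t^{-\frac{\alpha}{\sigma}\left(\frac{n}{r}-\gamma\right)}$ becomes $t^{-\frac{\alpha}{\sigma}\left(\frac{n}{p}-\frac{n}{q}-\gamma\right)}$, reproducing exactly the powers of (\ref{CaseIEstimates}) and (\ref{CaseIVIIIEstimates}), with the extra $t^{-\alpha}$ arising precisely in the rows $\beta=\alpha$, $\eta=\sigma$ that invoke Lemma \ref{LrMittagLeffler2}.

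Next I would check, case by case, that each geometric region encodes exactly the hypotheses of the underlying proposition. Using $\frac{n}{r}=\frac{n}{p}-\frac{n}{q}$, the constraint $r\le 2$ is equivalent to $\frac{n}{q}\le\frac{n}{p}-\frac{n}{2}$, while $\frac{1}{q}\ge 0$ forces $\frac{n}{p}\ge\frac{n}{2}$; these are the upper bound on $\frac{n}{q}$ and the left inequality of (\ref{Ogamma12}). The remaining requirement $r>\max\{1,\frac{n}{\sigma}\}$ --- needed so that the factor $\Gamma\!\left(\frac{r\sigma-n}{\sigma}\right)$ entering $C_{r,\sigma,n}^{(0)}$ is finite and case (ii) of Lemma \ref{LrMittagLeffler} applies --- reads $\frac{n}{p}-\frac{n}{q}<\min\{n,\sigma\}$, which is automatic once one grants the remaining defining inequalities of $\mathcal{R}_{1,2}(\sigma)$, namely $\frac{n}{p}<\min\{n,\sigma\}$ and $\frac{n}{q}>0$. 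The endpoint $r=1$ collapses the admissibility to $(p,q)=(1,\infty)$, giving item \textbf{(1)(a)}. The Sobolev items \textbf{(2)(i)--(iii)} follow identically with $s=\gamma$: condition (ii) of Proposition \ref{SobolevYoungProposition} maps onto $\mathcal{R}_{1,2}(\sigma+\gamma)$, while condition (iii) --- where finiteness of $\Gamma\!\left(\frac{n-r\gamma}{\sigma}\right)$ forces the strict bound $r<\frac{n}{\gamma}$, hence $\frac{n}{q}<\frac{n}{p}-\gamma$, and $r>1$ forces $\frac{n}{p}>\gamma$ --- maps onto $\mathcal{R}_{3}(\gamma,\sigma+\gamma)$ as in (\ref{Ogamma3}).

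Finally, the membership assertions are immediate once the right-hand sides of the two propositions are finite: since $\varphi\in\mathcal{S}(\BR^n)\subset L^p\cap\dot{W}^{\gamma,p}$, the quantities $\|\varphi\|_p$ and $\left\|(-\Delta)^{\frac{\gamma}{2}}\varphi\right\|_p$ are finite, so the estimates bound the $L^q$- resp. $\dot{W}^{\sigma,q}$-seminorm, which is exactly what the definitions of those spaces demand. The validity of the constants under \textbf{Assumption \ref{AssumptionLambda}} is inherited branch by branch from Lemma \ref{LrMittagLeffler} and Lemma \ref{LrMittagLeffler2}: the $\eta=0$ / $L^q$ estimates cover both case (a) ($\lambda>0$) and case (b) (sector) through the $\lambda\ge 0$ and the Theorem \ref{EstimatesMittagLeffler} branches of $C_{r,\sigma,n}^{(s)}(\alpha,\beta,\lambda)$, whereas the sharp $\eta=\sigma$ / $\dot{W}^{\sigma,q}$ estimates rest on the $\lambda\ge 0$ constant $D_{r,\sigma,n}^{(s)}(\alpha,\lambda)$. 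I expect the only real difficulty to be bookkeeping rather than analysis: keeping the $\min\{n,\varepsilon\}$ truncations consistent with both Gamma-factor finiteness constraints, and correctly matching the strict-versus-non-strict inequalities at the boundaries of $\mathcal{R}_{1,2}$ and $\mathcal{R}_3$ (in particular the $r=1$ endpoint and the $r=\frac{n}{\gamma}$ exclusion) --- all the substantive estimates having already been absorbed into Lemma \ref{LrMittagLeffler}, Lemma \ref{LrMittagLeffler2}, Proposition \ref{YoungIneqProposition} and Proposition \ref{SobolevYoungProposition}.
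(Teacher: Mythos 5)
Your proposal is correct and follows essentially the same route as the paper's own proof in Appendix \ref{AppendixC}: eliminate $r'$ from the two admissibility relations to get $\frac{n}{r}=\frac{n}{p}-\frac{n}{q}$, substitute this into the decay exponents of Propositions \ref{YoungIneqProposition} and \ref{SobolevYoungProposition}, and translate the constraints $r=1$ resp. $\max\{1,\frac{n}{\sigma+s}\}<r\le 2$ (or $<\frac{n}{\gamma}$) into the defining inequalities of $\mathcal{R}_{1,2}$ and $\mathcal{R}_{3}$. The additional remarks on the membership assertions and on how Assumption \ref{AssumptionLambda} is inherited branch by branch from the constants $C^{(s)}_{r,\sigma,n}$ and $D^{(s)}_{r,\sigma,n}$ match the paper's (briefer) treatment.
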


\newpage 

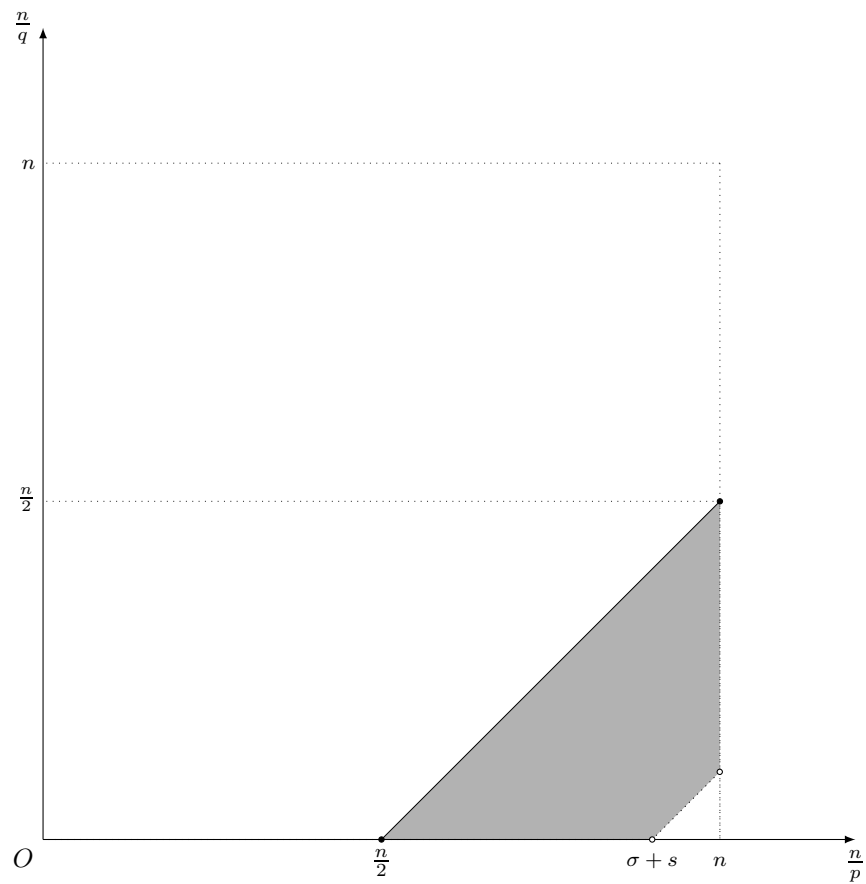
\begin{figure}\label{EstimatePicture}
	\begin{tikzpicture}[scale=9]
		\tkzInit[xmin=0, xmax=.7, ymin=0, ymax=.70]
		\tkzDrawX[noticks, label=\(\frac{n}{p}\)]
		\tkzDrawY[noticks, label=\( \frac{n}{q}\)]
		\tkzDefPoints{0/0/O}\tkzLabelPoints[below left](O)
		\tkzDefPoints{	0.5/0/A,
			1/.5/B,
			1/1/C,
			0.5/1/D}
		
		\tkzDefPoint(0.5,0){E}
		\tkzDefPoint(1,0.5){F}
		\tkzDefPoint(0.1,0){M}
		\tkzDefPoint(0.9,0){M'}
		\tkzDefPoint(0.35,0){Q}
		\tkzDefPoint(0.35,1){R}
		\tkzDefPoint(0.35,1.1){S}
		\tkzDefPoint(1,0.9){P}
		\tkzDefPoint(1,0.1){P'}
		
		\tkzDefPoint(0.5,0.4){N}
		\tkzDefPointWith[colinear=at M'](O,C)\tkzGetPoint{aux}

		\tkzInterLL(M,N)(Q,R)\tkzGetPoint{I1}
		
		\tkzInterLL(M',aux)(B,C)\tkzGetPoint{M''}
		\tkzInterLL(M',M'')(A,D)\tkzGetPoint{I2}		
		
		\tkzPointShowCoord[xlabel=\(n\), ylabel=\(n\), xstyle={below=2mm},ystyle={left=1mm}](C)
		\tkzPointShowCoord[noxdraw,noydraw, ylabel=\(\frac{n}{2}\), xstyle={below=2mm},ystyle={left=1mm}](B)
		
		\tkzPointShowCoord[xlabel=\(\sigma+s\), xstyle={below=2mm},ystyle={left=1mm}](M')
		
		
		
		\tkzPointShowCoord[ xstyle={below=2mm},ystyle={left=1mm}](B)

		\tkzDrawSegments(A,B)
		\tkzDrawSegments[dotted](M',M'')
		
		\tkzDrawPoints[fill=white](M',M'')
		\tkzDrawPoints[fill=black](A,B)
		\tkzLabelPoint[below](A){\(\frac{n}{2}\)}
		
		
		\begin{scope}[on background layer]
			\tkzFillPolygon[gray!60](A,B,M'',M')
		\end{scope}
	\end{tikzpicture}
	\caption{Geometric interpretation of {\bf Corollary \ref{DispersiveCorollary}} under the constraints $s\in \{0,\gamma\}$ and $\frac{n}{2}<\sigma+s<n$. In this case, we are unable to obtain $\dot{W}^{s,1}-L^\infty$ and $\dot{W}^{s,1}-\dot{W}^{\sigma,\infty}$ estimates.}
\end{figure}

\begin{figure}\label{EstimatePictureb}
	\begin{tikzpicture}[scale=9]
		\tkzInit[xmin=0, xmax=.7, ymin=0, ymax=.70]
		\tkzDrawX[noticks, label=\(\frac{n}{p}\)]
		\tkzDrawY[noticks, label=\( \frac{n}{q}\)]
		\tkzDefPoints{0/0/O}\tkzLabelPoints[below left](O)
		\tkzDefPoints{	0.5/0/A,
			1/.5/B,
			1/1/C,
			0.5/1/D}
		
		\tkzDefPoint(0.5,0){E}
		\tkzDefPoint(1,0.5){F}
		\tkzDefPoint(0.1,0){M}
		\tkzDefPoint(0.9,0){M'}
		\tkzDefPoint(0.35,0){Q}
		\tkzDefPoint(0.35,1){R}
		\tkzDefPoint(0.35,1.1){S}
		\tkzDefPoint(1,0.9){P}
		\tkzDefPoint(1,0){P'}
		
		\tkzDefPoint(0.5,0.4){N}
		\tkzDefPointWith[colinear=at M'](O,C)\tkzGetPoint{aux}

		\tkzInterLL(M,N)(Q,R)\tkzGetPoint{I1}
		
		\tkzInterLL(M',aux)(B,C)\tkzGetPoint{M''}
		\tkzInterLL(M',M'')(A,D)\tkzGetPoint{I2}		
		
		\tkzPointShowCoord[xlabel=\(n\), ylabel=\(n\), xstyle={below=2mm},ystyle={left=1mm}](C)
		\tkzPointShowCoord[noxdraw,noydraw, ylabel=\(\frac{n}{2}\), xstyle={below=2mm},ystyle={left=1mm}](B)
		
		
		
		
		\tkzPointShowCoord[ylabel=\(\frac{n}{2}\), xstyle={below=2mm},ystyle={left=1mm}](B)

		\tkzDrawSegments(A,B)
		\tkzDrawSegments(P',B)
		
		\tkzDrawPoints[fill=black](A,B,P')
		\tkzLabelPoint[below](A){\(\frac{n}{2}\)}
		
		
		\begin{scope}[on background layer]
			\tkzFillPolygon[gray!60](A,B,P')
		\end{scope}
	\end{tikzpicture}
	\caption{Geometric interpretation of {\bf Corollary \ref{DispersiveCorollary}} under the constraints $s\in \{0,\gamma\}$ and $\sigma+s\geq n$. In this case, we also got $\dot{W}^{s,1}-L^\infty$ and $\dot{W}^{s,1}-\dot{W}^{\sigma,\infty}$ estimates.}
\end{figure}
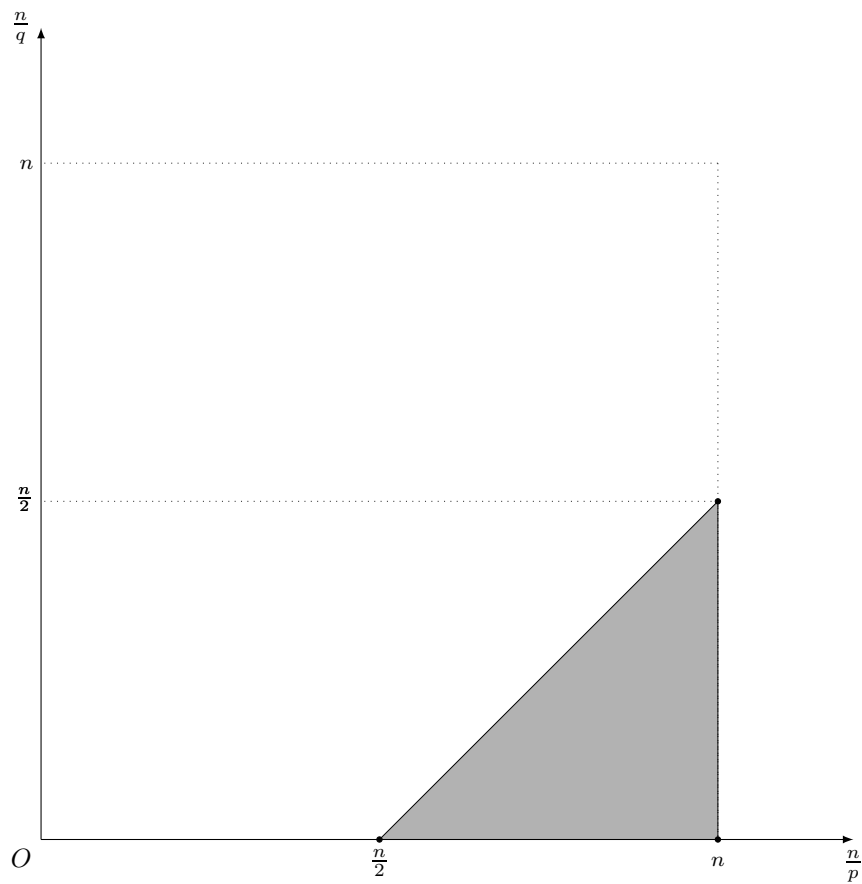

\newpage 
~
\newpage
~
\newpage 

\section{Main Results}\label{MainSection}

Let us turn at last our attention to the {\bf Cauchy Problem \ref{CP_parabolic}} and {\bf Cauchy Problem \ref{CP_hyperbolic}}. 

\subsection{Notations}

To state a solution representation for both problems, the auxiliar constants
\begin{eqnarray}
	\label{RootsLambda}\lambda_{\pm} =\begin{cases}
		\displaystyle \frac{\mu}{2}\pm i\sqrt{1-\frac{\mu^2}{4}}&, 0< \mu<2 \\ \ \\
		\displaystyle \frac{\mu}{2}\pm \sqrt{\frac{\mu^2}{4}-1} &, \mu\geq 2 
	\end{cases}
\end{eqnarray}
as well as the kernel functions ${\bm M}_{\sigma,n}(t,x|\alpha,\beta,\mu)$, ${\bm N}_{\sigma,n}(t,x|\alpha,\mu)$ and ${\bm J}_{\sigma,n}(t,x|\alpha,\mu)$, introduced as below, will be required on the proof of Theorem \ref{Theorem1}, Theorem \ref{Theorem2} and elsewhere.

\begin{notation}[Kernel functions underlying to {\bf Cauchy Problems \ref{CP_parabolic} \& \ref{CP_hyperbolic}}]\label{MarquesNelsonKernels}
	\begin{eqnarray*}
		{\bm M}_{\sigma,n}(t,x|\alpha,\beta,\mu)=\\ \ \\
		\begin{cases}
			\displaystyle 	\frac{t^{\beta-1}}{(2\pi)^n} \int_{\BR^n}~\frac{\lambda_+E_{\alpha,\beta}(-\lambda_+|\xi|^\sigma t^\alpha)-\lambda_-E_{\alpha,\beta}(-\lambda_-|\xi|^\sigma t^\alpha)}{\lambda_+-\lambda_-}~e^{ix\cdot \xi}d\xi&,~\mu\neq 2 \\ \ \\
			\displaystyle \frac{t^{\beta-1}}{\alpha(2\pi)^n} \int_{\BR^n} \left(~E_{\alpha,\beta-1}(-|\xi|^\sigma t^{\alpha})-(1+\alpha-\beta)E_{\alpha,\beta}(-|\xi|^\sigma t^{\alpha})~\right)~e^{ix\cdot \xi}d\xi&,~\mu= 2.
		\end{cases} 
	\end{eqnarray*}
	
	\begin{eqnarray*}{\bm N}_{\sigma,n}(t,x|\alpha,\mu)=\\  \ \\
		=\begin{cases}
			\displaystyle \frac{1}{(2\pi)^n} \int_{\BR^n}	\left(\frac{E_{\alpha,1}(-\lambda_+|\xi|^\sigma t^\alpha)}{1-(\lambda_+)^2}+\frac{E_{\alpha,1}(-\lambda_-|\xi|^\sigma t^\alpha)}{1-(\lambda_-)^2}\right)e^{ix\cdot \xi}d\xi &,~\mu\neq 2 \\ \ \\
			\displaystyle \frac{1}{(2\pi)^n} \int_{\BR^n}\left(E_{\alpha,1}(-|\xi|^\sigma t^\alpha)+\frac{|\xi|^{\sigma}t^{\alpha}}{\alpha}E_{\alpha,\alpha}(-|\xi|^\sigma t^{\alpha})\right)e^{ix\cdot \xi}d\xi&,~\mu= 2.
		\end{cases} 
	\end{eqnarray*}
\end{notation}

\begin{notation}[Kernel function underlying to {\bf Cauchy Problem \ref{CP_hyperbolic}}]\label{JorgeKernel}
	\begin{eqnarray*}
		{\bm J}_{\sigma,n}(t,x|\alpha,\mu)=\\ \ \\
		=\begin{cases}
			\displaystyle 	\frac{t}{(2\pi)^n}\int_{\BR^n}\left(~\frac{\lambda_+E_{\alpha,2}(-\lambda_+|\xi|^\sigma t^\alpha)-\lambda_-E_{\alpha,2}(-\lambda_-|\xi|^\sigma t^\alpha)}{\lambda_+-\lambda_-}~\right) e^{i x\cdot \xi}d\xi&,~\mu\neq 2 \\ \ \\
			\displaystyle \frac{t}{\alpha(2\pi)^n}\int_{\BR^n}\left(~E_{\alpha,1}(-|\xi|^\sigma t^\alpha)-(\alpha-1)E_{\alpha,2}(-|\xi|^\sigma t^\alpha)~\right)e^{ix\cdot \xi}d\xi&,~\mu= 2.
		\end{cases} 
	\end{eqnarray*}
\end{notation}

\subsection{Solution of the Cauchy Problem \ref{CP_parabolic}}\label{SolutionCP_parabolic}

\begin{theorem}\label{Theorem1}
	Let $0<\alpha\leq \frac{1}{2}$, $2\alpha\leq \beta< 2\alpha+1$, $\sigma>0$ and $\mu>0$ 
	be given. 
	If $u_0\in \mathcal{S}(\mathbb{R}^n)$ and 
		$f(t,\cdot)\in \mathcal{S}(\mathbb{R}^n)$, for every $t\geq 0$,
		then $u: [0,\infty)\times \BR^n\longrightarrow \BR$ defined through \begin{eqnarray*}
			u(t,x)=\left({\bm N}_{\sigma,n}(t,\cdot|\alpha,\mu)*u_0\right)(x)+\int_{0}^t \left[~{\bm M}_{\sigma,n}(t-\tau,\cdot|\alpha,\beta,\mu)\ast f(\tau,\cdot)~\right](x)~d\tau
		\end{eqnarray*} solves the {\bf Cauchy Problem \ref{CP_parabolic}}, whereby ${\bm M}_{\sigma,n}(t,x|\alpha,\beta,\mu)$ and ${\bm N}_{\sigma,n}(t,x|\alpha,\mu)$ denote the kernel functions introduced in {\bf Notation \ref{MarquesNelsonKernels}}.
	\end{theorem}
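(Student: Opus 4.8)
The plan is to pass to frequency space, solve the resulting scalar fractional ODE by the Laplace transform, and then invert. First I would apply the spatial Fourier transform $\mathcal{F}$ to the equation of {\bf Cauchy Problem \ref{CP_parabolic}}. Since $0<\alpha\leq\frac12$ forces $0<2\alpha\leq 1$, both Caputo--Djrbashian operators $\partial^{2\alpha}_t$ and $\partial^\alpha_t$ sit in the range $m=1$ of (\ref{CaputoDerivative}); combining this with the spectral definition (\ref{RieszOperators}) reduces the problem, for each fixed $\xi\in\BR^n$, to the scalar Cauchy problem
\[
\partial^{2\alpha}_t\widehat u(t,\xi)+\mu|\xi|^\sigma\partial^\alpha_t\widehat u(t,\xi)+|\xi|^{2\sigma}\widehat u(t,\xi)=I^{\beta-2\alpha}_{0^+}\widehat f(t,\xi),\qquad\widehat u(0,\xi)=\widehat u_0(\xi).
\]

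Next I would apply the Laplace transform in $t$, writing $\widetilde u(s,\xi)=\mathcal{L}[\widehat u(\cdot,\xi)](s)$. Using (\ref{LaplaceCaputo}) with $m=1$ on the two Caputo terms and (\ref{LaplaceRLiouville}) on the Riemann--Liouville integral turns the scalar problem into the algebraic relation
\[
\bigl(s^{2\alpha}+\mu|\xi|^\sigma s^\alpha+|\xi|^{2\sigma}\bigr)\widetilde u(s,\xi)=\bigl(s^{2\alpha-1}+\mu|\xi|^\sigma s^{\alpha-1}\bigr)\widehat u_0(\xi)+s^{-(\beta-2\alpha)}\widetilde f(s,\xi).
\]
The decisive fact is that the Fourier--Laplace symbol factors as $s^{2\alpha}+\mu|\xi|^\sigma s^\alpha+|\xi|^{2\sigma}=(s^\alpha+\lambda_+|\xi|^\sigma)(s^\alpha+\lambda_-|\xi|^\sigma)$, with $\lambda_\pm$ as in (\ref{RootsLambda}) satisfying $\lambda_++\lambda_-=\mu$ and $\lambda_+\lambda_-=1$. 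For $\mu\neq 2$ the roots are simple, and I would resolve the rational factors $\frac{s^\alpha+\mu|\xi|^\sigma}{(s^\alpha+\lambda_+|\xi|^\sigma)(s^\alpha+\lambda_-|\xi|^\sigma)}$ (for the $\widehat u_0$-part) and $\frac{s^\alpha}{(s^\alpha+\lambda_+|\xi|^\sigma)(s^\alpha+\lambda_-|\xi|^\sigma)}$ (for the $\widetilde f$-part) into partial fractions in the variable $s^\alpha$. The leftover powers $s^{\alpha-1}$ and $s^{\alpha-\beta}$ then select the second Mittag-Leffler index $1$ and $\beta$ through the Laplace pair (\ref{LaplaceIdentityMittagLeffler}) with $\gamma=1$; simplifying the coefficients via $\lambda_++\lambda_-=\mu$ and $\lambda_+\lambda_-=1$ reproduces exactly the $\lambda_\pm$-weighted combinations defining $\widehat{\bm N}_{\sigma,n}(t,\xi|\alpha,\mu)$ and $\widehat{\bm M}_{\sigma,n}(t,\xi|\alpha,\beta,\mu)$, the spatial Fourier images of the kernels in {\bf Notation \ref{MarquesNelsonKernels}}. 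The Laplace convolution formula (\ref{LaplaceConvolution}) converts the $\widetilde f$-product into the time-convolution $\int_0^t\widehat{\bm M}_{\sigma,n}(t-\tau,\xi|\alpha,\beta,\mu)\widehat f(\tau,\xi)\,d\tau$, and inverting $\mathcal{F}$ recovers the two spatial convolutions of the statement.

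For the confluent case $\mu=2$ (double root $\lambda_+=\lambda_-=1$) the partial-fraction step degenerates, and (\ref{LaplaceIdentityMittagLeffler}) with $\gamma=2$ yields the three-parameter functions $E^2_{\alpha,\beta}$ and $E^2_{\alpha,\alpha+1}$. Here I would invoke Lemma \ref{Properties_ML}{\bf (iii)}, namely $\alpha E^2_{\alpha,\beta}(z)=E_{\alpha,\beta-1}(z)-(1+\alpha-\beta)E_{\alpha,\beta}(z)$ and its special case $E^2_{\alpha,\alpha+1}(z)=\frac{1}{\alpha}E_{\alpha,\alpha}(z)$, to rewrite the kernels in the two-parameter form recorded in the $\mu=2$ branches of ${\bm M}_{\sigma,n}$ and ${\bm N}_{\sigma,n}$. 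The initial condition is then checked from $E_{\alpha,1}(0)=1$: using $\lambda_+\lambda_-=1$ and $\lambda_++\lambda_-=\mu$ one computes $\widehat{\bm N}_{\sigma,n}(0,\xi|\alpha,\mu)=\frac{1}{1-\lambda_+^2}+\frac{1}{1-\lambda_-^2}=1$, so that $u(0,\cdot)=u_0$, while the integral term vanishes at $t=0$.

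The algebra is routine; the hard part will be the analytic justification that this formally inverted candidate is a genuine (weak) solution. The pair (\ref{LaplaceIdentityMittagLeffler}) is stated under $|\lambda s^{-\alpha}|<1$, so one must argue by analytic continuation that all transform identities hold on a common half-plane $\Re(s)>\omega$, and one must verify the hypotheses behind (\ref{LaplaceCaputo})---in particular the decay $\lim_{t\to\infty}\partial_t^k\widehat u(t,\xi)=0$---which follows from the Mittag-Leffler bound of Theorem \ref{EstimatesMittagLeffler}. Finally, transferring the frequency-space identity back to an identity in $x$ requires interchanging $\mathcal{F}^{-1}$, the operators $(-\Delta)^\sigma$ and $\partial^\gamma_t$, and the defining integrals; for Schwartz data $u_0$ and $f(t,\cdot)$ this is legitimized by Fubini's theorem and dominated convergence, using the integrability of the Mittag-Leffler Fourier multipliers supplied by the decay estimates of Section \ref{DispersiveSection}.
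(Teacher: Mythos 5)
Your proposal is correct and follows essentially the same route as the paper: Fourier transform in $x$, Laplace transform in $t$ via (\ref{LaplaceCaputo})--(\ref{LaplaceRLiouville}), factorization of the symbol through $\lambda_\pm$, partial fractions in $s^\alpha$ combined with the Laplace pair (\ref{LaplaceIdentityMittagLeffler}), Lemma \ref{Properties_ML} for the confluent case $\mu=2$, and the two convolution formulas to return to physical space. The only cosmetic difference is that you decompose the $\widehat{u_0}$-symbol $s^{\alpha-1}(s^\alpha+\mu|\xi|^\sigma)/\bigl((s^\alpha+\lambda_+|\xi|^\sigma)(s^\alpha+\lambda_-|\xi|^\sigma)\bigr)$ directly, whereas the paper first splits $U_0$ as $\tfrac{1}{s}$ minus a remainder and passes through $E_{\alpha,\alpha+1}$ before simplifying; both land on the same kernels.
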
 
	
	\begin{proof}
	First, we recall that $u(t,x)$ solves the {\bf Cauchy Problem \ref{CP_parabolic}} if and only if $\widehat{u}(t,\xi)=(\mathcal{F}u)(t,\xi)$ solves
	\begin{equation}\label{CPu0}
		\begin{cases}
			\displaystyle \partial^{2\alpha}_t \widehat{u}(t,\xi)+\mu |\xi|^{\sigma}\partial^\alpha_t \widehat{u}(t,\xi)+|\xi|^{2\sigma} \widehat{u}(t,\xi)=I^{\beta-2\alpha}_{0^+}\widehat{f}(t,\xi)&,\mbox{in} ~(0,\infty)\times \mathbb{R}^n \\ \ \\
			\widehat{u}(0,\xi)=\widehat{u_0}(\xi)&,\mbox{in}~\mathbb{R}^n,
		\end{cases}
	\end{equation}
	with $\widehat{u_0}(\xi)=(\mathcal{F}u_0)(\xi)$ and $I^{\beta-2\alpha}_{0^+}\widehat{f}(t,\xi)=I^{\beta-2\alpha}_{0^+}(\mathcal{F}f)(t,\xi)$.
	
	In order to compute the solution representation of (\ref{CPu0}) we adopt the Laplace transform $\mathcal{L}$ defined through eq. (\ref{LT}). First, one obtains from eqs.~(\ref{LaplaceCaputo}) and (\ref{LaplaceRLiouville}), and from the initial conditions underlying to the Cauchy Problem (\ref{CPu0}) that
	\begin{eqnarray*}
		\mathcal{L}[\partial^{2\alpha}_t \widehat{u}(t,\xi)](s)&=&s^{2\alpha}\mathcal{L}[ \widehat{u}(t,\xi)](s)-s^{2\alpha-1}\widehat{u_0}(\xi), \\
		\mathcal{L}[\partial^{\alpha}_t \widehat{u}(t,\xi)](s)&=&s^{\alpha}\mathcal{L}[ \widehat{u}(t,\xi)](s)-s^{\alpha-1}\widehat{u_0}(\xi), \\
		\mathcal{L}[I^{\beta-2\alpha}_{0^+} \widehat{f}(t,\xi)](s)&=&s^{2\alpha-\beta}\mathcal{L}[ \widehat{f}(t,\xi)](s),
	\end{eqnarray*}
	since $0<2\alpha\leq 1$ and $0\leq \beta-2\alpha< 1$.
	
	Thus, after few algebraic manipulations we conclude that the solution of (\ref{CPu0}), $\widehat{u}(t,\xi)$, admits the following Laplace representation
	\begin{eqnarray}
		\label{Us}\mathcal{L}[\widehat{u}(t,\xi)](s)=U_0(s,\xi)~\widehat{u_0}(\xi)+M(s,\xi)~\mathcal{L}[\widehat{f}(t,\xi)](s),
	\end{eqnarray}
	with 
	\begin{eqnarray}
		\label{M}M(s,\xi)&=&\frac{s^{2\alpha-\beta}}{s^{2\alpha}+\mu |\xi|^\sigma s^{\alpha} +|\xi|^{2\sigma}}, \\ \nonumber \\
		\label{U0}U_0(s,\xi)&=&\frac{s^{2\alpha-1}+\mu |\xi|^\sigma s^{\alpha-1}}{s^{2\alpha}+\mu |\xi|^\sigma s^{\alpha} +|\xi|^{2\sigma}}. 
	\end{eqnarray}
	
	By taking into account the constants $\lambda_+$ and $\lambda_-$ defined viz eq. (\ref{RootsLambda}), we observe that $s=-\lambda_-|\xi|^\sigma$ and $s=-\lambda_+|\xi|^\sigma$ correspond to the algebraic roots of the polynomial $P(s)=s^2+\mu|\xi|^\sigma s+|\xi|^{2\sigma}$. Then
	\begin{eqnarray*}
		\label{FactorizationIds}s^{2\alpha}+\mu|\xi|^\sigma s^\alpha+|\xi|^{2\sigma}=(s^\alpha+\lambda_+|\xi|^\sigma)(s^\alpha+\lambda_-|\xi|^\sigma)
	\end{eqnarray*}
	so that for $\lambda_+\neq \lambda_-$ (i.e. for $\mu\neq 2$) the set of identities
	\begin{eqnarray}
		\label{UndeterminedCoefficients0}
		~\frac{s^\alpha}{s^{2\alpha}+\mu|\xi|^\sigma s^\alpha+|\xi|^{2\sigma}}=\frac{1}{\lambda_+-\lambda_-}\left(\frac{\lambda_+}{s^\alpha+\lambda_+|\xi|^\sigma}-\frac{\lambda_-}{s^\alpha+\lambda_-|\xi|^\sigma}\right), \\
		\label{UndeterminedCoefficients1}
		~\frac{1}{s^{2\alpha}+\mu|\xi|^\sigma s^\alpha+|\xi|^{2\sigma}}=\frac{-|\xi|^{-\sigma}}{\lambda_+-\lambda_-}\left(\frac{1}{s^\alpha+\lambda_+|\xi|^\sigma}-\frac{1}{s^\alpha+\lambda_-|\xi|^\sigma}\right)
	\end{eqnarray}
	yield straightforwardly by the method of undetermined coefficients.
	
	Thus, from eq. (\ref{UndeterminedCoefficients0}) we obtain that eq.~(\ref{M}) is equivalent to
	\begin{eqnarray}\label{MDivision}
		M(s,\xi)&=&\begin{cases}
			\displaystyle \frac{1}{\lambda_+-\lambda_-}\left(\frac{\lambda_+~s^{\alpha-\beta}}{s^\alpha+\lambda_+|\xi|^\sigma}-\frac{\lambda_-~s^{\alpha-\beta}}{s^\alpha+\lambda_-|\xi|^\sigma}\right)&,~ ~\mu\neq 2 \\ \ \\
			\displaystyle\frac{s^{2\alpha-\beta}}{(s^\alpha+|\xi|^\sigma)^2}&,~\mu= 2,
		\end{cases}
	\end{eqnarray}
	whereas the equivalence between eq.~(\ref{U0}) and 
	\begin{eqnarray}\label{U0Division}
			U_0(s,\xi)=\displaystyle \frac{1}{s}-\frac{|\xi|^{2\sigma}s^{-1}}{s^{2\alpha}+\mu |\xi|^\sigma s^{\alpha} +|\xi|^{2\sigma}} \nonumber \\ \nonumber \\
			=\begin{cases}
				\displaystyle 	\frac{1}{s}-\frac{-|\xi|^\sigma}{\lambda_+-\lambda_-}\left(\frac{s^{-1}}{s^\alpha+\lambda_+|\xi|^\sigma}-\frac{s^{-1}}{s^\alpha+\lambda_-|\xi|^\sigma}\right) &,~ \mbox{if}~\mu\neq 2 \\ \ \\
				\displaystyle \frac{1}{s}-|\xi|^{2\sigma}\frac{s^{-1}}{(s^\alpha+|\xi|^\sigma)^2}&,~\mbox{if}~\mu= 2.
			\end{cases}
	\end{eqnarray}
	follows from eq.~(\ref{UndeterminedCoefficients1}).
	
	Next, from the Laplace identity (\ref{LaplaceIdentityMittagLeffler}) and from Lemma \ref{Properties_ML} we obtain the set of identities
	\begin{eqnarray}\label{MLaplaceInverse}
			\mathcal{L}^{-1}[M(s,\xi)](t)=
			\begin{cases}
				\displaystyle 	t^{\beta-1}~\frac{\lambda_+E_{\alpha,\beta}(-\lambda_+|\xi|^\sigma t^\alpha)-\lambda_-E_{\alpha,\beta}(-\lambda_-|\xi|^\sigma t^\alpha)}{\lambda_+-\lambda_-}&,~\mu\neq 2 \\ \ \\
				\displaystyle t^{\beta-1}E_{\alpha,\beta}^2(-|\xi|^\sigma t^{\alpha})&,~\mu= 2
			\end{cases}\nonumber \\ \nonumber \\ \nonumber \\ 
			=
			\begin{cases}
				\displaystyle 	t^{\beta-1}~\frac{\lambda_+E_{\alpha,\beta}(-\lambda_+|\xi|^\sigma t^\alpha)-\lambda_-E_{\alpha,\beta}(-\lambda_-|\xi|^\sigma t^\alpha)}{\lambda_+-\lambda_-}&,~\mu\neq 2 \\ \ \\
				\displaystyle \frac{t^{\beta-1}}{\alpha}\left(~E_{\alpha,\beta-1}(-|\xi|^\sigma t^{\alpha})-(1+\alpha-\beta)E_{\alpha,\beta}(-|\xi|^\sigma t^{\alpha})~\right)&,~\mu= 2.
			\end{cases} 
	\end{eqnarray}
	
	In the same order of ideas, from (\ref{LaplaceIdentityMittagLeffler}) 
	we conclude that the Laplace inverse of (\ref{U0Division}), $\mathcal{L}^{-1}[U_0(s,\xi)](t)$, equals to
	\begin{eqnarray*}
			\begin{cases}
				\displaystyle 	1-\frac{-|\xi|^\sigma t^\alpha}{\lambda_+-\lambda_-}\left(E_{\alpha,\alpha+1}(-\lambda_+|\xi|^\sigma t^\alpha)-E_{\alpha,\alpha+1}(-\lambda_-|\xi|^\sigma t^\alpha)\right) &,~\mu\neq 2 \\ \ \\
				\displaystyle 1-|\xi|^{2\sigma}t^{2\alpha}E_{\alpha,2\alpha+1}^2(-|\xi|^\sigma t^{\alpha})&,~\mu= 2.
			\end{cases}
	\end{eqnarray*}
	
	Note already that the constants $\lambda_+$ and $\lambda_-$ defined by eq. (\ref{RootsLambda}) satisfy the set of relations
	\begin{eqnarray}
		\label{RootsProperties}\lambda_+\lambda_-=1&\wedge &\displaystyle \frac{1}{\lambda_+-\lambda_-}\left(\frac{1}{\lambda_-}-\frac{1}{\lambda_+}\right)=1.
	\end{eqnarray}
	
	Then, by a straightforward application of Lemma \ref{Properties_ML}, one has that $\mathcal{L}^{-1}[U_0(s,\xi)](t)$, computed as above, simplifies to
	\begin{eqnarray}\label{U0LaplaceInverse}
			\begin{cases}
				\displaystyle 	1-\frac{1}{\lambda_+-\lambda_-}\left(\frac{E_{\alpha,1}(-\lambda_+|\xi|^\sigma t^\alpha)-1}{\lambda_+}-\frac{E_{\alpha,1}(-\lambda_-|\xi|^\sigma t^\alpha)-1}{\lambda_-}\right) &,~\mu\neq 2 \\ \ \\
				\displaystyle 1+|\xi|^{\sigma}t^{\alpha}\left(E_{\alpha,\alpha+1}^2(-|\xi|^\sigma t^{\alpha})-E_{\alpha,\alpha+1}(-|\xi|^\sigma t^{\alpha})\right)&,~\mu= 2
			\end{cases} \nonumber \\ \nonumber \\
			=\begin{cases}
				\displaystyle 	\frac{E_{\alpha,1}(-\lambda_+|\xi|^\sigma t^\alpha)}{1-(\lambda_+)^2}+\frac{E_{\alpha,1}(-\lambda_-|\xi|^\sigma t^\alpha)}{1-(\lambda_-)^2} &,~\mu\neq 2 \\ \ \\
				\displaystyle E_{\alpha,1}(-|\xi|^\sigma t^\alpha)+\frac{|\xi|^{\sigma}t^{\alpha}}{\alpha}E_{\alpha,\alpha}(-|\xi|^\sigma t^{\alpha})&,~\mu= 2.
			\end{cases}
	\end{eqnarray}
	
	Thereafter, by applying the Laplace convolution identity (\ref{LaplaceConvolution}) one has that eq. (\ref{Us}) is equivalent to
	\begin{eqnarray}
		\label{LaplaceFsx}
		\widehat{u}(t,\xi)=\mathcal{L}^{-1}[U_0(s,\xi)](t)~\widehat{u_0}(\xi)+\int_{0}^t \mathcal{L}^{-1}[M(s,\xi)](t-\tau)~\widehat{f}(\tau,\xi)d\tau.
	\end{eqnarray}
	
	The above representation formula provides a solution for the Cauchy Problem (\ref{CPu0}), whereby the terms $\mathcal{L}^{-1}[M(s,\xi)](t)$ and $\mathcal{L}^{-1}[U_0(s,\xi)](t)$ are given by (\ref{MLaplaceInverse}) and (\ref{U0LaplaceInverse}), respectively.

	Finally, from the {\it Fourier convolution formula} (\ref{FourierConvolutionRn}) one can moreover say that
	\begin{eqnarray*}
		u(t,x)=\left({\bm N}_{\sigma,n}(t,\cdot|\alpha,\mu)*u_0\right)(x)+\int_{0}^t \left[~{\bm M}_{\sigma,n}(t-\tau,\cdot|\alpha,\beta,\mu)\ast f(\tau,\cdot)~\right](x)~d\tau,
	\end{eqnarray*}
	where ${\bm M}_{\sigma,n}(t,x|\alpha,\beta,\mu)$ and ${\bm N}_{\sigma,n}(t,x|\alpha,\mu)$ stand for the kernel functions introduced in {\bf Notation \ref{MarquesNelsonKernels}}, concluding in this way the proof of Theorem \ref{Theorem1}.
\end{proof}
	
	\subsection{Solution of the Cauchy Problem \ref{CP_hyperbolic}}\label{SolutionCP_hyperbolic}
	
	\begin{theorem}\label{Theorem2}
		Let $\frac{1}{2}<\alpha\leq 1$, $2\alpha\leq \beta< 2\alpha+1$, $\sigma>0$, $\mu>0$ 
		be given. 
		If $u_0,u_1\in \mathcal{S}(\mathbb{R}^n)$ and 
			$f(t,\cdot)\in \mathcal{S}(\mathbb{R}^n)$, for every $t\geq 0$,
			then $u: [0,\infty)\times \BR^n \longrightarrow \BR$ defined through \begin{eqnarray*}
				u(t,x)&=&({\bm N}_{\sigma,n}(t,\cdot|\alpha,\mu)*u_0)(x)+({\bm J}_{\sigma,n}(t,\cdot|\alpha,\mu)*u_1)(x) \\
				&+&\int_{0}^t \left[~{\bm M}_{\sigma,n}(t-\tau,\cdot|\alpha,\beta,\mu)\ast f(\tau,\cdot)~\right](x)~d\tau 
			\end{eqnarray*} solves the {\bf Cauchy Problem \ref{CP_hyperbolic}}, whereby ${\bm M}_{\sigma,n}(t,\cdot|\alpha,\beta,\mu)$, ${\bm N}_{\sigma,n}(t,\cdot|\alpha,\mu)$ and ${\bm J}_{\sigma,n}(t,\cdot|\alpha,\mu)$ denote the kernel functions introduced in {\bf Notation \ref{MarquesNelsonKernels}} and {\bf Notation \ref{JorgeKernel}}.
		\end{theorem}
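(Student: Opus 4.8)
The plan is to mirror the proof of Theorem~\ref{Theorem1} line by line, the single substantive change being that the hypothesis $\frac{1}{2}<\alpha\leq 1$ forces $1<2\alpha\leq 2$, so that the Caputo--Djrbashian derivative $\partial_t^{2\alpha}$ now falls into the case $m=2$ of the transform identity (\ref{LaplaceCaputo}), whereas $\partial_t^{\alpha}$ still sits in the case $m=1$ since $0<\alpha\leq 1$. First I would pass to Fourier variables exactly as in (\ref{CPu0}), the system now carrying the extra initial datum $\partial_t\widehat{u}(0,\xi)=\widehat{u_1}(\xi)$, and then apply the Laplace transform in $t$. The only new contribution relative to Theorem~\ref{Theorem1} is the boundary term $s^{2\alpha-2}\widehat{u_1}(\xi)$ produced by $\mathcal{L}[\partial_t^{2\alpha}\widehat{u}]$; after collecting terms and dividing by $s^{2\alpha}+\mu|\xi|^\sigma s^\alpha+|\xi|^{2\sigma}$ one is led to
\begin{eqnarray*}
	\mathcal{L}[\widehat{u}(t,\xi)](s)=U_0(s,\xi)\,\widehat{u_0}(\xi)+J(s,\xi)\,\widehat{u_1}(\xi)+M(s,\xi)\,\mathcal{L}[\widehat{f}(t,\xi)](s),
\end{eqnarray*}
where $U_0$ and $M$ are \emph{verbatim} the multipliers (\ref{U0}) and (\ref{M}), and $J(s,\xi)=\dfrac{s^{2\alpha-2}}{s^{2\alpha}+\mu|\xi|^\sigma s^\alpha+|\xi|^{2\sigma}}$ is the genuinely new symbol attached to $u_1$.

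Because $U_0$ and $M$ are unchanged, the corresponding portions of the representation---namely the kernels ${\bm N}_{\sigma,n}$ and ${\bm M}_{\sigma,n}$ of {\bf Notation~\ref{MarquesNelsonKernels}}---are recovered word for word from (\ref{U0LaplaceInverse}) and (\ref{MLaplaceInverse}), so no fresh work is needed there. The heart of the argument is therefore the inversion of $J$. For $\mu\neq 2$ I would factor out $s^{\alpha-2}$ and invoke the partial-fraction identity (\ref{UndeterminedCoefficients0}) to write
\begin{eqnarray*}
	J(s,\xi)=\frac{1}{\lambda_+-\lambda_-}\left(\frac{\lambda_+\,s^{\alpha-2}}{s^\alpha+\lambda_+|\xi|^\sigma}-\frac{\lambda_-\,s^{\alpha-2}}{s^\alpha+\lambda_-|\xi|^\sigma}\right),
\end{eqnarray*}
and then apply the Laplace--Mittag-Leffler identity (\ref{LaplaceIdentityMittagLeffler}) with $\beta=2$ to each summand. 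This yields precisely $\mathcal{L}^{-1}[J(s,\xi)](t)=t\,\dfrac{\lambda_+E_{\alpha,2}(-\lambda_+|\xi|^\sigma t^\alpha)-\lambda_-E_{\alpha,2}(-\lambda_-|\xi|^\sigma t^\alpha)}{\lambda_+-\lambda_-}$, that is, the Fourier symbol of ${\bm J}_{\sigma,n}$ in {\bf Notation~\ref{JorgeKernel}}.

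For the degenerate case $\mu=2$ (so $\lambda_+=\lambda_-=1$) I would instead read $J(s,\xi)=s^{2\alpha-2}(s^\alpha+|\xi|^\sigma)^{-2}$ and recognize it, via (\ref{LaplaceIdentityMittagLeffler}) applied to the three-parameter function $E_{\alpha,\beta}^2$ with $\beta=2$, as the Laplace transform of $t\,E_{\alpha,2}^2(-|\xi|^\sigma t^\alpha)$; the equivalence with the form stated in {\bf Notation~\ref{JorgeKernel}} then follows from Lemma~\ref{Properties_ML}{\bf (iii)} specialized to $\beta=2$, i.e.\ from $\alpha E_{\alpha,2}^2(z)=E_{\alpha,1}(z)-(\alpha-1)E_{\alpha,2}(z)$. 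Having inverted all three multipliers, I would conclude exactly as in Theorem~\ref{Theorem1}: the Laplace convolution formula (\ref{LaplaceConvolution}) dispatches the inhomogeneous $M$-term, and a final application of the Fourier convolution formula (\ref{FourierConvolutionRn}) transports every product of symbols into the spatial convolutions displayed in the statement. I do not anticipate a genuine obstacle, the computation being structurally identical to Theorem~\ref{Theorem1}; the only point demanding care is the bookkeeping of the \emph{second} boundary term in (\ref{LaplaceCaputo}) and the verification that it is $u_1$ alone---and not a mixture with $u_0$---that multiplies $J$, which is transparent once one observes that $\partial_t^\alpha$ (being in the case $m=1$) contributes no $u_1$-term at all.
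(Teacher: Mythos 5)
Your proposal is correct and follows essentially the same route as the paper: Fourier transform, Laplace transform with the extra boundary term $s^{2\alpha-2}\widehat{u_1}(\xi)$ from the $m=2$ case of (\ref{LaplaceCaputo}), reuse of $U_0$ and $M$ from Theorem \ref{Theorem1}, partial-fraction inversion of the new multiplier (which the paper calls $U_1$ and obtains via $U_1=s^{\beta-2}M$, a cosmetic difference from your direct use of (\ref{UndeterminedCoefficients0})), and the reduction of $tE_{\alpha,2}^2$ to the stated form via Lemma \ref{Properties_ML}\textbf{(iii)}. No gaps.
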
 
		
		\begin{proof}
			Following {\it mutatis mutandis} the scheme of proof adopted in Theorem \ref{Theorem1}, we recall that for $\frac{1}{2}<\alpha\leq 1$ and $2\alpha\leq \beta< 2\alpha+1$, one has that $u(t,x)$ solves the {\bf Cauchy Problem~\ref{CP_hyperbolic}} if, and only if, $\widehat{u}(t,\xi)=(\mathcal{F}u)(t,\xi)$ is a solution of 
			\begin{equation}\label{CPu0u1}
				\begin{cases}
					\displaystyle \partial^{2\alpha}_t \widehat{u}(t,\xi)+\mu |\xi|^{\sigma}\partial^\alpha_t \widehat{u}(t,\xi)+|\xi|^{2\sigma} \widehat{u}(t,\xi)=I^{\beta-2\alpha}_{0^+}\widehat{f}(t,\xi)&,\mbox{in}~(0,\infty)\times \mathbb{R}^n \\ \ \\ 
					\widehat{u}(0,\xi)=\widehat{u_0}(\xi)&,\mbox{in}~\mathbb{R}^n \\ \ \\
					\partial_t\widehat{u}(0,\xi)=\widehat{u_1}(\xi)&,\mbox{in}~\mathbb{R}^n,
				\end{cases}
			\end{equation}
			with $\widehat{u_0}(\xi)=(\mathcal{F}u_0)(\xi)$, $\widehat{u_1}(\xi)=(\mathcal{F}u_1)(\xi)$ and $I^{\beta-2\alpha}_{0^+}\widehat{f}(t,\xi)=I^{\beta-2\alpha}_{0^+}(\mathcal{F}f)(t,\xi)$. 
			
			By ensuring that $1\leq 2\alpha<2$ and $0\leq \beta-2\alpha< 1$, we note already that 
			\begin{eqnarray*}
				\mathcal{L}[\partial^{2\alpha}_t \widehat{u}(t,\xi)](s)&=&s^{2\alpha}\mathcal{L}[ \widehat{u}(t,\xi)](s)-s^{2\alpha-1}\widehat{u_0}(\xi)-s^{2\alpha-2}\widehat{u_1}(\xi), \\
				\mathcal{L}[\partial^{\alpha}_t \widehat{u}(t,\xi)](s)&=&s^{\alpha}\mathcal{L}[ \widehat{u}(t,\xi)](s)-s^{\alpha-1}\widehat{u_0}(\xi), \\
				\mathcal{L}[I^{\beta-2\alpha}_{0^+} \widehat{f}(t,\xi)](s)&=&s^{2\alpha-\beta}\mathcal{L}[ \widehat{f}(t,\xi)](s)
			\end{eqnarray*}
			results from eqs.~(\ref{LaplaceCaputo}) and (\ref{LaplaceRLiouville}), and from the initial conditions underlying to the Cauchy Problem (\ref{CPu0u1}).
			Then, $\mathcal{L}[\widehat{u}(t,\xi)](s)$ is of the form
			\begin{eqnarray}
				\label{Uss}\mathcal{L}[\widehat{u}(t,\xi)](s)=U_0(s,\xi)\widehat{u_0}(\xi)+U_1(s,\xi)\widehat{u_1}(\xi)+M(s,\xi) \widehat{f}(t,\xi),
			\end{eqnarray}
			with
			\begin{eqnarray}
				\label{U1}U_1(s,\xi)&=&\frac{s^{2\alpha-2}}{s^{2\alpha}+\mu |\xi|^\sigma s^{\alpha} +|\xi|^{2\sigma}},
			\end{eqnarray}
			and $M(s,\xi)$,~$U_0(s,\xi)$ defined viz eqs. (\ref{M}) and (\ref{U0}), respectively.
			
			Hence, the solution of the {\bf Cauchy Problem \ref{CP_hyperbolic}} is of the form
			\begin{eqnarray*}
				u(t,x)&=&({\bm N}_{\sigma,n}(t,\cdot|\alpha,\mu)*u_0)(x)+(\mathcal{F}^{-1}\mathcal{L}^{-1})[U_1(s,\xi)](t)*u_1(x) \\
				&+&\int_{0}^t \left({\bm M}_{\sigma,n}(t-\tau,\cdot|\alpha,\beta,\mu)*f(\tau,\cdot)\right)(x)~d\tau.
			\end{eqnarray*}
			
			To conclude the proof it remains us to compute, at a first glance, the function $\mathcal{L}^{-1}[U_1(s,\xi)](t)$ by means of the inverse of the Laplace transform, and afterwards, use the Fourier inversion formula, provided by eq. (\ref{FInv}), to show that $(\mathcal{F}^{-1}\mathcal{L}^{-1})[U_1(s,\cdot)](t)$ coincides with the function ${\bm J}_{\sigma,n}(t,x|\alpha,\mu)$ introduced on {\bf Notation \ref{JorgeKernel}}.

			To do so, we start to observe that $U_1(s,\xi)$ and $M(s,\xi)$ (see eq.~(\ref{M})) are interrelated by the formula $U_1(s,\xi)=s^{\beta-2}M(s,\xi)$. Then,
			\begin{eqnarray}
				\label{U1Division}
				U_1(s,\xi)=\begin{cases}
					\displaystyle \frac{1}{\lambda_+-\lambda_-}\left(\frac{\lambda_+~s^{\alpha-2}}{s^\alpha+\lambda_+|\xi|^\sigma}-\frac{\lambda_-~s^{\alpha-2}}{s^\alpha+\lambda_-|\xi|^\sigma}\right)&,~ ~\mu\neq 2 \\ \ \\
					\displaystyle\frac{s^{2\alpha-2}}{(s^\alpha+|\xi|^\sigma)^2}&,~\mu= 2
				\end{cases}
			\end{eqnarray}
			is rather immediate from eq. (\ref{MDivision}). 
			
			Thus, from the Laplace identity (\ref{LaplaceIdentityMittagLeffler}), one obtains that $\mathcal{L}^{-1}[U_1(s,\xi)](t)$ equals to
			\begin{eqnarray*}
					\begin{cases}
						\displaystyle 	t~\frac{\lambda_+E_{\alpha,2}(-\lambda_+|\xi|^\sigma t^\alpha)-\lambda_-E_{\alpha,2}(-\lambda_-|\xi|^\sigma t^\alpha)}{\lambda_+-\lambda_-}&,~\mu\neq 2 \\ \ \\
						\displaystyle tE_{\alpha,2}^2(-|\xi|^\sigma t^\alpha)&,~\mu= 2,
					\end{cases}
			\end{eqnarray*}
			
			Moreover, from direct application of Lemma \ref{Properties_ML},  $\mathcal{L}^{-1}[U_1(s,\xi)](t)$ simplifies to
			\begin{eqnarray}\label{U1LaplaceInverse}
					\begin{cases}
						\displaystyle 	t~\frac{\lambda_+E_{\alpha,2}(-\lambda_+|\xi|^\sigma t^\alpha)-\lambda_-E_{\alpha,2}(-\lambda_-|\xi|^\sigma t^\alpha)}{\lambda_+-\lambda_-}&,~\mu\neq 2 \\ \ \\
						\displaystyle \frac{t}{\alpha}\left(E_{\alpha,1}(-|\xi|^\sigma t^\alpha)-(\alpha-1)E_{\alpha,2}(-|\xi|^\sigma t^\alpha)\right)&,~\mu= 2
					\end{cases} 
			\end{eqnarray}
			Finally, by employing the Fourier inversion formula (\ref{FInv}) to the previous formula we obtain the kernel function ${\bm J}_{\sigma,n}(t,x|\alpha,\mu)$, as desired.
		\end{proof}
		
		\subsection{Strichartz estimates}\label{StrichartzSection}

		Having already gathered the main technical results in Section \ref{DispersiveSection}, we are now in a position to investigate the size and decay of the solution of {\bf Cauchy Problem \ref{CP_parabolic}} and {\bf Cauchy Problem \ref{CP_hyperbolic}}, from its closed-form representations, in terms of the celebrated Strichartz estimates (cf.~\cite[Chapter 2.]{Tao06}).
		On the formulation of the required assumptions, we need to consider the following regions of the
		$O\sigma\gamma$ plane:
		\begin{eqnarray}
			\label{S0-region}
			\mathcal{S}_0(\alpha)=\left\{(\sigma,\gamma)~:~0<\gamma< n~\wedge~n-\gamma<\sigma\leq \alpha\gamma~\right\};\\ \nonumber \\
			\label{S12-region}	\mathcal{S}_{1,2}(\alpha)=\left\{(\sigma,\gamma)~:~\displaystyle 0<\gamma< \frac{n}{2}~\wedge~\displaystyle \frac{n}{2}-\gamma<\sigma\leq \alpha\gamma~\right\};\\ \nonumber \\
			\label{S3-region}
			\mathcal{S}_{3}(\alpha)=\left\{(\sigma,\gamma)~:~\displaystyle \frac{n}{2}\leq \gamma< n~\wedge~0<\sigma\leq \alpha\gamma~\right\}.
		\end{eqnarray}		
		
		\begin{assumption}\label{AssumptionParameters}
			We assume that the $4-$tuple $(\sigma,\gamma,p,q)$ satisfy, at least, one of the following conditions:
			\begin{itemize}
				\item[\bf (i)] $(\sigma,\gamma)\in \mathcal{S}_{0}(\alpha)$~~$\wedge$~~$(p,q)=(1,\infty)$ \newline [~see eq. (\ref{S0-region})~];
				\item[\bf (ii)] $(\sigma,\gamma)\in \mathcal{S}_{1,2}(\alpha)$~~$\wedge$~~$(p,q)\in \mathcal{R}_{1,2}\left(\sigma+\varepsilon\right)\cap  \mathcal{R}_{1,2}\left(\sigma+\nu\right)$ \newline  [see eqs. (\ref{S12-region}) and (\ref{Ogamma12})];
				\item[\bf (iii)] $(\sigma,\gamma)\in \mathcal{S}_{3}(\alpha)$~~$\wedge$~~$(p,q)\in \mathcal{R}_{3}\left(\varepsilon,\sigma+\varepsilon\right)\cap \mathcal{R}_{3}\left(\nu,\sigma+\nu\right)$  \newline [~see eqs. (\ref{S3-region}) and (\ref{Ogamma3})~],
			\end{itemize}
			with
			\begin{center}
				$\displaystyle \varepsilon:=\gamma-\frac{\sigma}{\alpha}$ and $\displaystyle \nu:=\gamma-\frac{\sigma}{\alpha}\cdot \beta.$
			\end{center}
		\end{assumption}
		\begin{assumption}\label{AssumptionF}
			We assume that the following set of conditions on the function $f:[0,T]\times \BR^n\longrightarrow \BR$:
			\begin{itemize}
				\item[\bf (I)] $f(\tau,\cdot)\in \dot{W}^{\gamma-\frac{\sigma}{\alpha},p}(\BR^n)\cap \dot{W}^{\gamma-\frac{\sigma}{\alpha}\cdot\beta,p}(\BR^n)$;
				\item[\bf (II)] $f\in L^\infty_tL^q_x([0,T]\times\BR^n)$;
				\item[\bf (III)] $\displaystyle \left\|~(-\Delta)^{\frac{\gamma}{2}-\frac{\sigma}{2\alpha}\cdot\beta} f(\tau,\cdot)~\right\|_p\lesssim \tau^{\frac{\alpha}{\sigma}\left(\frac{n}{p}-\frac{n}{q}-\left(\gamma-\frac{\sigma}{\alpha}\right)\right)-1} \|f\|_{L^\infty_tL^q_x([0,T]\times\BR^n)}$,
			\end{itemize}
			hold for every $\tau\in [0,T]$.
		\end{assumption}

		We start with the following theorem, where we prove a retarded Strichartz estimate for $f(t,x)$, appearing on the solution representation of the {\bf Cauchy Problem \ref{CP_parabolic}} \& {\bf Cauchy Problem \ref{CP_hyperbolic}}. That will be crucial afterwards on the proof of Theorem \ref{Theorem1Strichartz} \& Theorem \ref{Theorem2Strichartz}, when it will be considered the initial datum/data 
		\begin{center}
			$u_0\in \dot{W}^{\gamma-\frac{\sigma}{\alpha},p}(\BR^n)$ resp. $(u_0,u_1)\in \dot{W}^{\gamma,p}(\BR^n)\times \dot{W}^{\gamma-\frac{\sigma}{\alpha},p}(\BR^n)$.
		\end{center}

		\begin{theorem}\label{fBoundedProposition}
			Let $0<\alpha\leq 1$, $2\alpha\leq\beta< 2\alpha+1$, $\mu>0$ and $\sigma>0$ be given.
			Assume further that the $4-$tuple $(\sigma,\gamma,p,q)$ satisfies {\bf Assumption \ref{AssumptionParameters}} and
			$f:[0,T]\times \BR^n\longrightarrow \BR$ satisfies
			{\bf Assumption \ref{AssumptionF}}.
			
			Then, for every $t\in[0,T]$, we find the following estimate to hold:
			\begin{eqnarray*}
				\left\|~\displaystyle \int_0^t {\bm M}_{\sigma,n}(t-\tau,\cdot|\alpha,\beta,\mu)\ast  f(\tau,\cdot)d\tau~\right \|_q \lesssim \|f\|_{L^\infty_tL^q_x([0,T]\times\BR^n)}.
			\end{eqnarray*}
		\end{theorem}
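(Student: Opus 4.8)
The plan is to recognise the inhomogeneous (Duhamel) term as a time-superposition of the Mittag-Leffler operators already estimated in Section \ref{DispersiveSection}, and then to collapse the resulting $\tau$-integral into an Euler Beta integral whose total power of $t$ vanishes.

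First I would identify the kernel. Comparing \textbf{Notation \ref{MarquesNelsonKernels}} with the pseudo-differential kernel (\ref{MittagLefflerKernel}), for $\mu\neq 2$ the convolution ${\bm M}_{\sigma,n}(t-\tau,\cdot|\alpha,\beta,\mu)\ast f(\tau,\cdot)$ is, up to the scalar prefactor $(t-\tau)^{\beta-1}/(\lambda_+-\lambda_-)$, the difference of the two operators $\lambda_\pm E_{\alpha,\beta}(-\lambda_\pm(-\Delta)^{\frac{\sigma}{2}}(t-\tau)^\alpha)$ acting on $f(\tau,\cdot)$, and for $\mu=2$ the analogous splitting into $E_{\alpha,\beta-1}$ and $E_{\alpha,\beta}$ provided by \textbf{Lemma \ref{Properties_ML}} (iii). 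Before invoking the dispersive bounds I would check that the roots $\lambda_\pm$ of (\ref{RootsLambda}) meet \textbf{Assumption \ref{AssumptionLambda}}: for $\mu\geq 2$ they are positive reals, whereas for $0<\mu<2$ they lie on the unit circle with $|\arg(\lambda_\pm)|=\arccos(\mu/2)$, for which a sector angle $\theta\in(\frac{\pi\alpha}{2},\pi\alpha)$ can always be selected.

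Next I would apply Minkowski's integral inequality to bring the $L^q_x$-norm inside the $\tau$-integral, and estimate each slice by \textbf{Corollary \ref{DispersiveCorollary}} with $\eta=0$ and Sobolev order $\nu=\gamma-\frac{\sigma}{\alpha}\beta$ --- this is exactly why \textbf{Assumption \ref{AssumptionParameters}} places $(p,q)$ in $\mathcal{R}_{*}(\sigma+\nu)$. Incorporating the kernel prefactor, this gives
\begin{equation*}
\left\|~{\bm M}_{\sigma,n}(t-\tau,\cdot|\alpha,\beta,\mu)\ast f(\tau,\cdot)~\right\|_q\lesssim (t-\tau)^{\beta-1-\frac{\alpha}{\sigma}\left(\frac{n}{p}-\frac{n}{q}-\nu\right)}~\left\|~(-\Delta)^{\frac{\nu}{2}}f(\tau,\cdot)~\right\|_p.
\end{equation*}
I would then feed in \textbf{Assumption \ref{AssumptionF}} (III), observing that $(-\Delta)^{\frac{\gamma}{2}-\frac{\sigma}{2\alpha}\beta}=(-\Delta)^{\frac{\nu}{2}}$, to replace $\|(-\Delta)^{\frac{\nu}{2}}f(\tau,\cdot)\|_p$ by $\tau^{\frac{\alpha}{\sigma}(\frac{n}{p}-\frac{n}{q}-\varepsilon)-1}\|f\|_{L^\infty_tL^q_x([0,T]\times\BR^n)}$, with $\varepsilon=\gamma-\frac{\sigma}{\alpha}$.

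Finally I would perform the time integration. After collecting the exponents the $\tau$-integral becomes $\int_0^t (t-\tau)^{a-1}\tau^{b-1}\,d\tau=B(a,b)\,t^{a+b-1}$ with
\begin{equation*}
a=\frac{\alpha}{\sigma}\left(\gamma-\frac{n}{p}+\frac{n}{q}\right),\qquad b=\frac{\alpha}{\sigma}\left(\frac{n}{p}-\frac{n}{q}-\varepsilon\right),
\end{equation*}
and the decisive cancellation is $a+b=\frac{\alpha}{\sigma}(\gamma-\varepsilon)=1$, so the power of $t$ disappears and the integral equals the constant $B(a,b)$; this yields the asserted bound by $\|f\|_{L^\infty_tL^q_x([0,T]\times\BR^n)}$. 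The main obstacle --- and the reason for the intricate region conditions --- is verifying the positivity $a,b>0$ that makes the Beta integral converge: $a>0$ reduces to $\frac{n}{p}-\frac{n}{q}<\gamma$, which holds because $\sigma+\nu<\gamma$ (using $\alpha<\beta$) on $\mathcal{R}_*(\sigma+\nu)$, while $b>0$ reduces to $\frac{n}{p}-\frac{n}{q}>\varepsilon$, which is precisely the lower bound carried by $\mathcal{R}_*(\sigma+\varepsilon)$. I would close by remarking that the $\mu=2$ case runs identically once its two Mittag-Leffler constituents are estimated separately, and that the endpoint tuple $(p,q)=(1,\infty)$ of \textbf{Assumption \ref{AssumptionParameters}} (i) is covered by the $L^1$--$L^\infty$ branch of \textbf{Corollary \ref{DispersiveCorollary}}.
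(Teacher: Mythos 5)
Your proof follows essentially the same route as the paper's: Minkowski's integral inequality, the dispersive estimates of Corollary \ref{DispersiveCorollary} applied at Sobolev order $\nu=\gamma-\frac{\sigma}{\alpha}\beta$, Assumption \ref{AssumptionF}(III), and then the observation that the resulting $\tau$-integral is a Beta integral whose exponents sum to $1$ so that the power of $t$ cancels (the paper evaluates the same integral via the Laplace convolution formula (\ref{LaplaceConvolution}) and the reflection identity $\Gamma(\eta)\Gamma(1-\eta)=\pi/\sin(\pi\eta)$, which is the same computation). The only point to correct is your side remark on Assumption \ref{AssumptionLambda}: for $0<\mu<2$ one has $|\arg(\lambda_\pm)|=\arccos(\mu/2)\in(0,\tfrac{\pi}{2})$, so a sector angle $\theta\in\bigl(\tfrac{\pi\alpha}{2},\pi\alpha\bigr)$ with $\theta\le|\arg(\lambda_\pm)|$ exists only when $\mu<2\cos\bigl(\tfrac{\pi\alpha}{2}\bigr)$ (and never when $\alpha=1$); it is not ``always'' selectable. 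The paper does not verify this either --- it carries Assumption \ref{AssumptionLambda} as an explicit hypothesis in Theorems \ref{Theorem1Strichartz} and \ref{Theorem2Strichartz} --- so you should either state it as a hypothesis or restrict the admissible $(\mu,\alpha)$, rather than assert it holds unconditionally.
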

		
		\begin{proof}
			Application of Minkowski inequality for integrals (cf.~\cite[Theorem 2.4]{LiebLoss01}), followed by the definition of ${\bm M}_{\sigma,n}(t,x|\alpha,\beta,\mu)$ (see {\bf Notation \ref{MarquesNelsonKernels}}) gives, for values of $1\leq q<\infty$, the set of inequalities
			\begin{eqnarray*}
				\left\|~\displaystyle \int_0^t {\bm M}_{\sigma,n}(t-\tau,\cdot|\alpha,\beta,\mu)\ast f(\tau,\cdot)d\tau~\right \|_q \leq \\
				\leq \int_0^t \left\|~\displaystyle {\bm M}_{\sigma,n}(t-\tau,\cdot|\alpha,\beta,\mu)\ast f(\tau,\cdot)~\right \|_q d\tau  \\ \ \\
				\lesssim \begin{cases}
					\displaystyle 	\int_0^t  (t-\tau)^{\beta-1} \max_{\lambda \in \{\lambda_+,\lambda_-\}} \left\|~E_{\alpha,\beta}(-\lambda(-\Delta)^{\frac{\sigma}{2}} (t-\tau)^{\alpha})f(\tau,\cdot)~\right\|_q d\tau &,~\mu\neq 2 \\ \ \\
					\displaystyle \int_0^t  (t-\tau)^{\beta-1}~\left\|~E_{\alpha,\alpha}(-(-\Delta)^{\frac{\sigma}{2}} (t-\tau)^\alpha)f(\tau,\cdot)~\right\|_qd\tau &,~\mu= 2.
				\end{cases}
			\end{eqnarray*}	
			
			For $q=\infty$, the same set of inequalities holds from the norm identity
			$\displaystyle \left\|~\displaystyle \cdot ~\right \|_\infty =\lim_{q\rightarrow\infty}\left\|~\displaystyle \cdot~\right \|_q$
			and from a straightforward application of dominated convergence theorem (cf.~\cite[Theorem 1.8]{LiebLoss01}).
			
			Thereafter, under the conditions of {\bf Assumption \ref{AssumptionParameters}} one gets, by direct application of parts {\bf (1)} and {\bf (2)} of Corollary \ref{DispersiveCorollary} that
			\begin{eqnarray*}
				\left\|~\displaystyle \int_0^t {\bm M}_{\sigma,n}(t-\tau,\cdot|\alpha,\beta,\mu)\ast  f(\tau,\cdot)d\tau~\right \|_q 
				\lesssim \nonumber \\
				\lesssim	\int_0^t (t-\tau)^{\beta-1-\frac{\alpha}{\sigma}\left(\frac{n}{p}-\frac{n}{q}-\left(\gamma-\frac{\sigma}{\alpha}\cdot\beta\right)\right)}~
				\left\|~(-\Delta)^{\frac{\gamma}{2}-\frac{\sigma}{2\alpha}\cdot\beta} f(\tau,\cdot)~\right\|_{p}~ d\tau \\
				=\int_0^t (t-\tau)^{-\frac{\alpha}{\sigma}\left(\frac{n}{p}-\frac{n}{q}-\left(\gamma-\frac{\sigma}{\alpha}\right)\right)}~
				\left\|~(-\Delta)^{\frac{\gamma}{2}-\frac{\sigma}{2\alpha}\cdot\beta} f(\tau,\cdot)~\right\|_{p}~ d\tau
			\end{eqnarray*}
			hold for every $t\in [0,T]$.

			Next, from the combination of the conditions {\bf (I)}, {\bf (II)} and {\bf (III)} considered on {\bf Assumption \ref{AssumptionF}} for the function $f:[0,T]\times \mathbb{R}^n\longrightarrow \mathbb{R}$, we conclude that
			\begin{eqnarray}
				\label{MinkowskiIntIneq}
				\left\|~\displaystyle \int_0^t {\bm M}_{\sigma,n}(t-\tau,\cdot|\alpha,\beta,\mu)\ast  f(\tau,\cdot)d\tau~\right \|_q 
				\lesssim\Lambda(t)~\|f\|_{L^\infty_tL^q_x([0,T]\times\BR^n)},
			\end{eqnarray}
			with $$\displaystyle \Lambda(t):=\int_0^t (t-\tau)^{-\frac{\alpha}{\sigma}\left(\frac{n}{p}-\frac{n}{q}-\left(\gamma-\frac{\sigma}{\alpha}\right)\right)}~\tau^{\frac{\alpha}{\sigma}\left(\frac{n}{p}-\frac{n}{q}-\left(\gamma-\frac{\sigma}{\alpha}\right)\right)-1}~d\tau.$$
			
			We note that $\Lambda(t)$ can be determined with the aid of the Laplace convolution formula (\ref{LaplaceConvolution}), whereby the existence of the Laplace transforms
			\begin{center}
				$\displaystyle \mathcal{L}\left\{t^{-\eta}\right\}(s)$ and $\displaystyle\mathcal{L}\left\{t^{\eta-1}\right\}(s)$, with $\displaystyle \eta=\frac{\alpha}{\sigma}\left(\frac{n}{p}-\frac{n}{q}-\left(\gamma-\frac{\sigma}{\alpha}\right)\right)$,
			\end{center}
			follow from the condition
			\begin{eqnarray}
				\label{fDecayConstraint}		\displaystyle 0<\alpha-\frac{\alpha}{\sigma}\left(\frac{n}{p}-\frac{n}{q}-\left(\gamma-\frac{\sigma}{\alpha}\right)\right)< \alpha.
			\end{eqnarray}
			
			Indeed, condition {\bf (I)} of {\bf Assumption \ref{AssumptionF}} ensures that $f(\tau,\cdot)\in \dot{W}^{\gamma-\frac{\sigma}{\alpha},p}(\BR^n)$, since 
			$$\dot{W}^{\gamma-\frac{\sigma}{\alpha},p}(\BR^n)\cap \dot{W}^{\gamma-\frac{\sigma}{\alpha}\cdot\beta,p}(\BR^n)\subseteq\dot{W}^{\gamma-\frac{\sigma}{\alpha},p}(\BR^n).$$
			
			Then, by setting $\displaystyle \varepsilon:=\gamma-\frac{\sigma}{\alpha},$
			we recall that the set of parameter constraints, that results from parts {\bf (1)} and {\bf (2)} of Corollary \ref{DispersiveCorollary} for the substitution $\gamma\rightarrow \varepsilon$, gives rise to the following conditions:
			\begin{itemize}
				\item[\bf (i)] In case of $(\sigma,\gamma)\in \mathcal{S}_0\left(\alpha\right)~\wedge~(p,q)=(1,\infty)$, one has
				$$0<\frac{n}{p}-\frac{n}{q}-\varepsilon<\sigma.$$
				\item[\bf (ii)] In case of $(\sigma,\gamma)\in \mathcal{S}_{1,2}\left(\alpha\right)~\wedge~(p,q)\in \mathcal{R}_{1,2}\left(\sigma+\varepsilon\right)$, one has
				$$
				\frac{n}{2}-\varepsilon\leq \frac{n}{p}-\frac{n}{q}-\varepsilon<\min\left\{n-\varepsilon,\sigma\right\}.
				$$
				\item[\bf (iii)] In case of $(\sigma,\gamma)\in \mathcal{S}_{3}\left(\alpha\right)~\wedge~(p,q)\in \mathcal{R}_{3}\left(\varepsilon,\sigma+\varepsilon\right)$
				$$	0<\frac{n}{p}-\frac{n}{q}-\varepsilon<\min\left\{n-\varepsilon,\sigma\right\}. $$
			\end{itemize}
			
			Thus,
			$
			\displaystyle 0<\frac{\alpha}{\sigma}\left(\frac{n}{p}-\frac{n}{q}-\nu\right)<\alpha,
			$
			and hence, condition (\ref{fDecayConstraint}) is always satisfied. 
			Moreover, combination of the Laplace convolution formula (\ref{LaplaceConvolution}) with Laplace inversion gives
			\begin{eqnarray*}
				\Lambda(t)&=&\Gamma\left(1-\frac{\alpha}{\sigma}\left(\frac{n}{p}-\frac{n}{q}-\left(\gamma-\frac{\sigma}{\alpha}\right)\right)\right)\Gamma\left(\frac{\alpha}{\sigma}\left(\frac{n}{p}-\frac{n}{q}-\left(\gamma-\frac{\sigma}{\alpha}\right)\right)\right) \\ \ \\
				&=&\dfrac{\pi}{\sin\left(\frac{\alpha \pi}{\sigma}\left(\frac{n}{p}-\frac{n}{q}-\left(\gamma-\frac{\sigma}{\alpha}\right)\right) \right)},
			\end{eqnarray*}
			which is a positive constant independent of the time variable $t$. 
			
			Thus, for every $t\in [0,T]$ the inequality (\ref{MinkowskiIntIneq}) becomes
			\begin{eqnarray*}
				\left\|~\displaystyle \int_0^t {\bm M}_{\sigma,n}(t-\tau,\cdot|\alpha,\beta,\mu)\ast  f(\tau,\cdot)d\tau~\right \|_q \lesssim \|f\|_{L^\infty_tL^q_x([0,T]\times\BR^n)},
			\end{eqnarray*} as desired.
		\end{proof}
		
		\begin{theorem}\label{Theorem1Strichartz}
			Let $0<\alpha\leq \frac{1}{2}$, $2\alpha\leq \beta< 2\alpha+1$, $\mu>0$ and $\sigma>0$ be given.
			
			Assume further the following:
			\begin{itemize}
				\item {\bf Assumption \ref{AssumptionLambda}} holds for the constants $\lambda_+$ and $\lambda_-$ defined viz (\ref{RootsLambda});
				\item {\bf Assumption \ref{AssumptionParameters}} holds for the $4-$tuple $(\sigma,\gamma,p,q)$;
				\item {\bf Assumption \ref{AssumptionF}} holds for the function $f:[0,T]\times \BR^n\longrightarrow \BR$.
			\end{itemize}
			
			Then, for every datum
			$u_0\in \dot{W}^{\gamma-\frac{\sigma}{\alpha},p}(\BR^n)$ one has the following Strichartz estimates for the solution of the {\bf Cauchy Problem \ref{CP_parabolic}}, $u:[0,T]\times \BR^n\longrightarrow \BR$, obtained in Theorem \ref{Theorem1}:
			\begin{eqnarray*}
				\label{BoundATheorem1Strichartz}
				\|u\|_{L^s_tL^q_x([0,T]\times\BR^n)}&\lesssim& T^{\frac{1}{s}-\frac{\alpha}{\sigma}\left(\frac{n}{p}-\frac{n}{q}-\left(\gamma-\frac{\sigma}{\alpha}\right)\right)}~\left\|~(-\Delta)^{\frac{\gamma}{2}-\frac{\sigma}{2\alpha}} u_0~\right\|_{p} \\
				\nonumber &+&T^{\frac{1}{s}}~\|f\|_{L^\infty_tL^q_x([0,T]\times\BR^n)},
			\end{eqnarray*}
			only if 
			\begin{eqnarray}
				\label{MixedNormConstraintA} ~1\leq s<\frac{1}{\frac{\alpha}{\sigma}\left(\frac{n}{p}-\frac{n}{q}-\left(\gamma-\frac{\sigma}{\alpha}\right)\right)}.
			\end{eqnarray}
		\end{theorem}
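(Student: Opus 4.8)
The plan is to split the solution representation obtained in Theorem \ref{Theorem1},
$$u(t,x)=\left({\bm N}_{\sigma,n}(t,\cdot|\alpha,\mu)*u_0\right)(x)+\int_0^t\left[~{\bm M}_{\sigma,n}(t-\tau,\cdot|\alpha,\beta,\mu)\ast f(\tau,\cdot)~\right](x)\,d\tau,$$
into a homogeneous part $u_{\mathrm{hom}}(t,\cdot):={\bm N}_{\sigma,n}(t,\cdot|\alpha,\mu)*u_0$ and an inhomogeneous part, to estimate the $L^s_tL^q_x$ norm of each, and to recombine them by the triangle inequality. The inhomogeneous part is immediate: Theorem \ref{fBoundedProposition} gives, for every $t\in[0,T]$, a bound $\|u(t,\cdot)-u_{\mathrm{hom}}(t,\cdot)\|_q\lesssim\|f\|_{L^\infty_tL^q_x([0,T]\times\BR^n)}$ whose constant does not depend on $t$, so raising to the power $s$ and integrating over $[0,T]$ as in (\ref{mixedNorm}) produces the summand $T^{1/s}\|f\|_{L^\infty_tL^q_x([0,T]\times\BR^n)}$.

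For $u_{\mathrm{hom}}$ I would expand ${\bm N}_{\sigma,n}$ using Notation \ref{MarquesNelsonKernels}. For $\mu\neq2$ this writes $u_{\mathrm{hom}}(t,\cdot)$ as a fixed linear combination, with the bounded coefficients $(1-\lambda_\pm^2)^{-1}$, of the terms $E_{\alpha,1}(-\lambda_\pm(-\Delta)^{\frac{\sigma}{2}}t^\alpha)u_0$; for $\mu=2$ it involves instead $E_{\alpha,1}(-(-\Delta)^{\frac{\sigma}{2}}t^\alpha)u_0$ together with $\tfrac{t^\alpha}{\alpha}(-\Delta)^{\frac{\sigma}{2}}E_{\alpha,\alpha}(-(-\Delta)^{\frac{\sigma}{2}}t^\alpha)u_0$. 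To each term I would apply Corollary \ref{DispersiveCorollary} with the substitution $\gamma\rightarrow\varepsilon:=\gamma-\frac{\sigma}{\alpha}$ (nonnegative precisely because the regions $\mathcal{S}_0,\mathcal{S}_{1,2},\mathcal{S}_3$ of {\bf Assumption \ref{AssumptionParameters}} force $\sigma\leq\alpha\gamma$), using part {\bf (1)} with $\eta=0,\beta=1$ on the $E_{\alpha,1}$ terms and part {\bf (2)} with $\eta=\sigma,\beta=\alpha$ on the $E_{\alpha,\alpha}$ term; here {\bf Assumption \ref{AssumptionLambda}}, assumed for both $\lambda_+$ and $\lambda_-$, supplies the hypotheses of the corollary. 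The extra decay $t^{-\alpha}$ delivered by part {\bf (2)} is exactly absorbed by the prefactor $t^\alpha$, so both contributions satisfy the same pointwise-in-time estimate
$$\|u_{\mathrm{hom}}(t,\cdot)\|_q\lesssim t^{-\eta}\left\|~(-\Delta)^{\frac{\gamma}{2}-\frac{\sigma}{2\alpha}}u_0~\right\|_p,\qquad\eta:=\frac{\alpha}{\sigma}\left(\frac{n}{p}-\frac{n}{q}-\left(\gamma-\frac{\sigma}{\alpha}\right)\right).$$

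It then remains to carry out the time integration. Raising to the power $s$ and integrating gives $\int_0^T t^{-s\eta}\,dt$, which converges if and only if $s\eta<1$, that is, exactly under the constraint (\ref{MixedNormConstraintA}); in that range $\int_0^T t^{-s\eta}\,dt=\tfrac{T^{1-s\eta}}{1-s\eta}$, so that $\|u_{\mathrm{hom}}\|_{L^s_tL^q_x([0,T]\times\BR^n)}\lesssim T^{1/s-\eta}\|(-\Delta)^{\frac{\gamma}{2}-\frac{\sigma}{2\alpha}}u_0\|_p$, which is the first summand. I expect the main obstacle to be bookkeeping rather than analysis: one must verify that in each of the three regimes of {\bf Assumption \ref{AssumptionParameters}} the substituted tuple $(\sigma,\varepsilon,p,q)$ genuinely lies in the admissible range of Corollary \ref{DispersiveCorollary} (including the boundary case $\varepsilon=0$, where part {\bf (1)} must replace part {\bf (2)}), and that the integrability threshold $s\eta<1$ is identical to the stated condition (\ref{MixedNormConstraintA}).
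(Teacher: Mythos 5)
Your proposal is correct and follows essentially the same route as the paper: estimate the homogeneous part by expanding ${\bm N}_{\sigma,n}$ and applying Corollary \ref{DispersiveCorollary} with the shift $\gamma\rightarrow\gamma-\frac{\sigma}{\alpha}$ (absorbing the $t^{-\alpha}$ from the $E_{\alpha,\alpha}$ term into the prefactor $t^{\alpha}$), invoke Theorem \ref{fBoundedProposition} for the inhomogeneous part, and integrate the resulting pointwise-in-time bound over $[0,T]$ under the constraint (\ref{MixedNormConstraintA}). Your explicit attention to the boundary case $\varepsilon=0$ and to which part of the corollary applies is, if anything, slightly more careful than the paper's own write-up.
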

		
		\begin{proof}
			Starting from the definition of ${\bm N}_{\sigma,n}(t,x|\alpha,\mu)$ (see {\bf Notation \ref{MarquesNelsonKernels}}), one has
			
			\begin{eqnarray*}
				\left\|~\displaystyle {\bm N}_{\sigma,n}(t,\cdot|\alpha,\mu)\ast u_0~\right \|_q \lesssim \\ \ \\
				\lesssim \begin{cases}
					\displaystyle 	 \max_{\lambda \in \{\lambda_+,\lambda_-\}} \left\|~E_{\alpha,1}\left(-\lambda(-\Delta)^{\frac{\sigma}{2}} t^\alpha\right)u_0~\right\|_q  &,~\mu\neq 2 \\ \ \\
					\displaystyle ~\left\|~E_{\alpha,1}\left(-(-\Delta)^{\frac{\sigma}{2}}t^\alpha\right) u_0~\right\|_q+t^\alpha \left\|~(-\Delta)^{\frac{\sigma}{2}}E_{\alpha,\alpha}\left(-(-\Delta)^{\frac{\sigma}{2}}t^\alpha\right) u_0~\right\|_q &,~\mu= 2.
				\end{cases}
			\end{eqnarray*}	
			
			Thereafter, under the {\bf Assumption \ref{AssumptionF}} for the constants $\lambda_+$ and $\lambda_{-}$ defined through eq. (\ref{RootsLambda}), the estimate
			\begin{center}
				$	\left\|~\displaystyle {\bm N}_{\sigma,n}(t,\cdot|\alpha,\mu)\ast u_0~\right \|_q \lesssim  t^{-\frac{\alpha}{\sigma}\left(\frac{n}{q}+\frac{n}{2}-\left(\gamma-\frac{\sigma}{\alpha}\right)\right)}~ \left\|~(-\Delta)^{\frac{\gamma}{2}-\frac{\sigma}{2\alpha}}u_0~\right\|_{p}$
			\end{center}
			follows by direct application of parts {\bf (1)} and {\bf (2)} of Corollary \ref{DispersiveCorollary} to the substitution $\gamma \rightarrow \gamma-\frac{\sigma}{\alpha}$. On the other hand, {\bf Assumption \ref{AssumptionF}} ensures that Theorem \ref{fBoundedProposition} can be applied to the function $f:[0,T]\times \BR^n\longrightarrow \BR$.
			
			Thus, for the solution of the {\bf Cauchy Problem \ref{CP_parabolic}}, determined in Theorem \ref{Theorem1}, we end up with the following dispersive estimate
			\begin{center}
				$\| u(t,\cdot)\|_q \lesssim t^{-\frac{\alpha}{\sigma}\left(\frac{n}{p}-\frac{n}{q}-\left(\gamma-\frac{\sigma}{\alpha}\right)\right)}~ \left\|~(-\Delta)^{\frac{\gamma}{2}-\frac{\sigma}{2\alpha}}u_0~\right\|_{p}+\|f\|_{L^\infty_tL^q_x([0,T]\times\BR^n)}$.
			\end{center}
			
			Moreover, by adopting the mixed-norm (\ref{mixedNorm}) of $L^s_tL^q_x([0,T]\times\BR^n)$, the sequence of inequalities
			\begin{eqnarray*}
				\|u\|_{L^s_tL^q_x([0,T]\times\BR^n)}&\leq& \left(\int_0^T t^{-s\cdot\frac{\alpha}{\sigma}\left(\frac{n}{p}-\frac{n}{q}-\left(\gamma-\frac{\sigma}{\alpha}\right)\right)} \|~(-\Delta)^{\frac{\gamma}{2}-\frac{\sigma}{2\alpha}}u_0~\|_{2}^s~dt\right)^{\frac{1}{s}} \\
				&+& \left(\int_0^T \|f\|_{L^\infty_tL^q_x([0,T]\times\BR^n)^s}^sdt\right)^{\frac{1}{s}} \\
				&\lesssim& T^{\frac{1}{s}-\frac{\alpha}{\sigma}\left(\frac{n}{p}-\frac{n}{q}-\left(\gamma-\frac{\sigma}{\alpha}\right)\right)} \left\|~(-\Delta)^{\frac{\gamma}{2}-\frac{\sigma}{2\alpha}}u_0~\right\|_{p}+T^{\frac{1}{s}}\|f\|_{L^\infty_tL^q_x([0,T]\times\BR^n)} 
			\end{eqnarray*}
			results from the Minkowski inequality and by standard integration techniques, whereby the condition (\ref{MixedNormConstraintA}) is to ensure the monotonicity of the norm of $L^s_tL^q_x([0,T]\times\BR^n)$ and the equality 
			\begin{eqnarray}
				\label{StrichartzIntegralA}\displaystyle \int_0^T t^{-s\cdot \frac{\alpha}{\sigma}\left(\frac{n}{p}-\frac{n}{q}-\left(\gamma-\frac{\sigma}{\alpha}\right)\right)}dt=\frac{T^{1-s\cdot \frac{\alpha}{\sigma}\left(\frac{n}{p}-\frac{n}{q}-\left(\gamma-\frac{\sigma}{\alpha}\right)\right)}}{1-s\cdot\frac{\alpha}{\sigma}\left(\frac{n}{p}-\frac{n}{q}-\left(\gamma-\frac{\sigma}{\alpha}\right)\right)}.
			\end{eqnarray}
			
			This concludes the proof of Theorem \ref{Theorem1Strichartz}.
		\end{proof}
		\begin{theorem}\label{Theorem2Strichartz}
			Let $0<\alpha\leq \frac{1}{2}$, $2\alpha\leq \beta< 2\alpha+1$, $\mu>0$ and $\sigma>0$ be given.
			
			Assume further the following:
			\begin{itemize}
				\item {\bf Assumption \ref{AssumptionLambda}} holds for the constants $\lambda_+$ and $\lambda_-$ defined viz (\ref{RootsLambda});
				\item {\bf Assumption \ref{AssumptionParameters}} holds for the $4-$tuple $(\sigma,\gamma,p,q)$;
				\item {\bf Assumption \ref{AssumptionF}} holds for the function $f:[0,T]\times \BR^n\longrightarrow \BR$.
			\end{itemize}
			
			Then, for every data $(u_0,u_1)\in \dot{W}^{\gamma,p}(\BR^n)\times\dot{W}^{\gamma-\frac{\sigma}{\alpha},p}(\BR^n)$, the solution of the {\bf Cauchy Problem \ref{CP_hyperbolic}}, $u:[0,T]\times \BR^n\longrightarrow \BR$, obtained in Theorem \ref{Theorem2} satisfies the Strichartz estimate
			\begin{eqnarray*}
				\|u\|_{L^s_tL^q_x\left([0,T]\times\BR^n\right)}&\lesssim& T^{\frac{1}{s}-\frac{\alpha}{\sigma}\left(\frac{n}{q}+\frac{n}{2}-\gamma \right)}
				\left(~\|~(-\Delta)^{\frac{\gamma}{2}} u_0~\|_{p}+\|~(-\Delta)^{\frac{\gamma}{2}-\frac{\sigma}{2\alpha}} u_1~\|_{p}~\right)\\
				&+& T^{\frac{1}{s}}~\|f\|_{L^\infty_tL^q_x\left([0,T]\times\BR^n\right)}
			\end{eqnarray*}
			only if
			\begin{eqnarray}
				\label{MixedNormConstraintB}
				1\leq s<\frac{1}{\frac{\alpha}{\sigma}\left(\frac{n}{p}-\frac{n}{q}-\gamma\right)}.
			\end{eqnarray}
		\end{theorem}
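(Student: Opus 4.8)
The plan is to follow \emph{mutatis mutandis} the proof of Theorem \ref{Theorem1Strichartz}; the only genuinely new ingredient is the kernel ${\bm J}_{\sigma,n}(t,\cdot|\alpha,\mu)$ coupling to the second datum $u_1$. By Theorem \ref{Theorem2} the solution splits as
\begin{eqnarray*}
	u(t,\cdot)&=&\left({\bm N}_{\sigma,n}(t,\cdot|\alpha,\mu)\ast u_0\right)+\left({\bm J}_{\sigma,n}(t,\cdot|\alpha,\mu)\ast u_1\right)\\
	&+&\int_0^t {\bm M}_{\sigma,n}(t-\tau,\cdot|\alpha,\beta,\mu)\ast f(\tau,\cdot)\,d\tau,
\end{eqnarray*}
so it suffices to derive a pointwise-in-$t$ dispersive bound for each of the three summands and then assemble them inside the $L^s_tL^q_x\left([0,T]\times \BR^n\right)$ norm.

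First I would treat the two data terms. Since ${\bm N}_{\sigma,n}$ is built from $E_{\alpha,1}$ --- and, in the degenerate case $\mu=2$, from $E_{\alpha,1}$ together with $\frac{t^\alpha}{\alpha}(-\Delta)^{\frac{\sigma}{2}}E_{\alpha,\alpha}$, whose prefactor $t^\alpha$ cancels the $t^{-\alpha}$ produced by the $\eta=\sigma,\ \beta=\alpha$ branch of Corollary \ref{DispersiveCorollary} --- a direct application of parts {\bf (1)} and {\bf (2)} of Corollary \ref{DispersiveCorollary} at Sobolev order $\gamma$ gives
\begin{eqnarray*}
	\left\|~{\bm N}_{\sigma,n}(t,\cdot|\alpha,\mu)\ast u_0~\right\|_q\lesssim t^{-\frac{\alpha}{\sigma}\left(\frac{n}{p}-\frac{n}{q}-\gamma\right)}\left\|~(-\Delta)^{\frac{\gamma}{2}}u_0~\right\|_p.
\end{eqnarray*}
For ${\bm J}_{\sigma,n}\ast u_1$ the kernel carries an overall factor $t$ and is assembled from the $\eta=0$ multipliers $E_{\alpha,2}$ (and, when $\mu=2$, $E_{\alpha,1}$ and $E_{\alpha,2}$). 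Applying Corollary \ref{DispersiveCorollary} to these multipliers at the \emph{shifted} Sobolev order $\gamma-\frac{\sigma}{\alpha}$ --- precisely the regularity of $u_1$ --- produces the decay $t^{-\frac{\alpha}{\sigma}\left(\frac{n}{p}-\frac{n}{q}-(\gamma-\frac{\sigma}{\alpha})\right)}$; multiplying by the prefactor $t$ and using $t\cdot t^{-\frac{\alpha}{\sigma}\cdot\frac{\sigma}{\alpha}}=t^{0}$ collapses the shift, yielding
\begin{eqnarray*}
	\left\|~{\bm J}_{\sigma,n}(t,\cdot|\alpha,\mu)\ast u_1~\right\|_q\lesssim t^{-\frac{\alpha}{\sigma}\left(\frac{n}{p}-\frac{n}{q}-\gamma\right)}\left\|~(-\Delta)^{\frac{\gamma}{2}-\frac{\sigma}{2\alpha}}u_1~\right\|_p,
\end{eqnarray*}
\emph{i.e.} exactly the same power of $t$ as the $u_0$--term.

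This cancellation of the factor $t$ against the $\frac{\sigma}{\alpha}$--shift in regularity is the step I expect to demand the most care: one must verify that the constraints of {\bf Assumption \ref{AssumptionParameters}} (encoded through $\varepsilon=\gamma-\frac{\sigma}{\alpha}$, $\nu=\gamma-\frac{\sigma}{\alpha}\beta$ and the regions $\mathcal{R}_{1,2},\mathcal{R}_3$) simultaneously license Corollary \ref{DispersiveCorollary} at \emph{both} Sobolev orders $\gamma$ and $\gamma-\frac{\sigma}{\alpha}$, so that the two data terms land in $L^q(\BR^n)$ with a common decay rate. The inhomogeneous summand is then disposed of by invoking Theorem \ref{fBoundedProposition} verbatim, whose hypotheses are granted by {\bf Assumption \ref{AssumptionParameters}} and {\bf Assumption \ref{AssumptionF}}, giving the uniform control $\lesssim \|f\|_{L^\infty_tL^q_x([0,T]\times\BR^n)}$.

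Collecting the three bounds yields the pointwise dispersive estimate
\begin{eqnarray*}
	\|u(t,\cdot)\|_q\lesssim t^{-\frac{\alpha}{\sigma}\left(\frac{n}{p}-\frac{n}{q}-\gamma\right)}\left(\left\|(-\Delta)^{\frac{\gamma}{2}}u_0\right\|_p+\left\|(-\Delta)^{\frac{\gamma}{2}-\frac{\sigma}{2\alpha}}u_1\right\|_p\right)+\|f\|_{L^\infty_tL^q_x([0,T]\times\BR^n)}.
\end{eqnarray*}
Finally I would take the $L^s_t$ norm over $[0,T]$, use Minkowski's inequality to split the three contributions, and evaluate $\int_0^T t^{-s\frac{\alpha}{\sigma}\left(\frac{n}{p}-\frac{n}{q}-\gamma\right)}\,dt$ exactly as in (\ref{StrichartzIntegralA}); the mixed-norm constraint (\ref{MixedNormConstraintB}) is precisely what forces the exponent $-s\frac{\alpha}{\sigma}\left(\frac{n}{p}-\frac{n}{q}-\gamma\right)$ to exceed $-1$, so that this integral is finite and contributes the factor $T^{\frac{1}{s}-\frac{\alpha}{\sigma}\left(\frac{n}{p}-\frac{n}{q}-\gamma\right)}$ to the two data terms, while the bounded $f$--term simply contributes $T^{\frac{1}{s}}\|f\|_{L^\infty_tL^q_x([0,T]\times\BR^n)}$, completing the Strichartz estimate.
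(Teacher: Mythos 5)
Your proposal is correct and follows essentially the same route as the paper: the paper likewise recycles the Theorem \ref{Theorem1Strichartz} argument for the ${\bm N}_{\sigma,n}\ast u_0$ and inhomogeneous pieces (the latter via Theorem \ref{fBoundedProposition}), and handles ${\bm J}_{\sigma,n}\ast u_1$ by exactly the cancellation you identify, writing $t^{-\frac{\alpha}{\sigma}\left(\frac{n}{q}+\frac{n}{2}-\gamma\right)}=t\cdot t^{-\frac{\alpha}{\sigma}\left(\frac{n}{q}+\frac{n}{2}-\left(\gamma-\frac{\sigma}{\alpha}\right)\right)}$ so that the prefactor $t$ absorbs the $\frac{\sigma}{\alpha}$ shift in Sobolev order, before integrating in $t$ under the constraint (\ref{MixedNormConstraintB}).
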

		
		\begin{proof}
			From the proof of Theorem \ref{Theorem1Strichartz} we can immediately infer that for the auxiliar function $v:[0,T]\times \BR^n\rightarrow\BR$, defined viz
			\begin{eqnarray*}
				v(t,x):=\left({\bm N}_{\sigma,n}(t,\cdot|\alpha,\mu)*u_0\right)(x)+\int_{0}^t \left[~{\bm M}_{\sigma,n}(t-\tau,\cdot|\alpha,\beta,\mu)\ast f(\tau,\cdot)~\right](x)~d\tau,
			\end{eqnarray*} the following Strichartz estimate
			\begin{eqnarray*}
				\displaystyle \|v\|_{L^s_tL^q_x([0,T]\times\BR^n)}
				&\lesssim& T^{\frac{1}{s}-\frac{\alpha}{\sigma}\left(\frac{n}{p}-\frac{n}{q}-\gamma\right)} \|~(-\Delta)^{\frac{\gamma}{2}}u_0~\|_{p}+T^{\frac{1}{s}}\|f\|_{L^\infty_tL^q_x([0,T]\times\BR^n)} 
			\end{eqnarray*} holds true for values of $\frac{1}{2}<\alpha\leq 1$. 
			Thus, it remains to show that 
			\begin{eqnarray}
				\label{StrichartzJorge}
				\left(\int_0^T\left\|~\displaystyle {\bm J}_{\sigma,n}(t,\cdot|\alpha,\mu)\ast u_1~\right \|_q^s dt\right)^{\frac{1}{s}}  \lesssim T^{\frac{1}{s}-\frac{\alpha}{\sigma}\left(\frac{n}{p}-\frac{n}{q}-\gamma\right)}~\|~(-\Delta)^{\frac{\gamma}{2}-\frac{\sigma}{2\alpha}} u_1~\|_{p},
			\end{eqnarray}
			holds for every $u_1\in \dot{W}^{\gamma-\frac{\sigma}{\alpha},p}(\BR^n)$.
			
			Indeed, from the definition of ${\bm J}_{\sigma,n}(t,x|\alpha,\mu)$ provided by {\bf Notation \ref{JorgeKernel}}, one has
			\begin{eqnarray*}
				\left\|~\displaystyle {\bm J}_{\sigma,n}(t,\cdot|\alpha,\mu)\ast u_1~\right \|_q \lesssim \\ \ \\
				\lesssim \begin{cases}
					\displaystyle 	 \max_{\lambda \in \{\lambda_+,\lambda_-\}} t~ \left\|~E_{\alpha,2}\left(-\lambda(-\Delta)^{\frac{\sigma}{2}} t^\alpha\right)u_1~\right\|_q  &,~\mu\neq 2 \\ \ \\
					\displaystyle ~t\left\|~E_{\alpha,1}\left(-(-\Delta)^{\frac{\sigma}{2}}t^\alpha\right) u_1~\right\|_q+t \left\|~E_{\alpha,2}\left(-(-\Delta)^{\frac{\sigma}{2}}t^\alpha\right) u_1~\right\|_q &,~\mu= 2.
				\end{cases}
			\end{eqnarray*}	
			
			Then, following {\it mutatis mutandis} the scheme of proof employed in Theorem \ref{fBoundedProposition}, the estimate
			\begin{center}
				$	\left\|~\displaystyle {\bm J}_{\sigma,n}(t,\cdot|\alpha,\mu)\ast u_1~\right \|_q \lesssim  t^{-\frac{\alpha}{\sigma}\left(\frac{n}{p}-\frac{n}{p}-\gamma\right)}~ \|~(-\Delta)^{\frac{\gamma}{2}-\frac{\sigma}{2\alpha}} u_1~\|_{2}$
			\end{center}
			results by an algebraic manipulation, based on the equality $$\displaystyle t^{-\frac{\alpha}{\sigma}\left(\frac{n}{q}+\frac{n}{2}-\gamma\right)}=t\times t^{-\frac{\alpha}{\sigma}\left(\frac{n}{q}+\frac{n}{2}-\left(\gamma-\frac{\sigma}{\alpha}\right)\right)}.$$ 
			
			Hence, the estimate (\ref{StrichartzJorge}) follows from the condition (\ref{MixedNormConstraintB}).
		\end{proof}
		\newpage 
		\begin{remark}
			The well-posedness of {\bf Cauchy Problem \ref{CP_parabolic}} \& {\bf Cauchy Problem \ref{CP_hyperbolic}}, provided by Theorem \ref{Theorem1Strichartz} and Theorem \ref{Theorem2Strichartz} respectively, 
			is ensured by the combination dispersive estimates obtained in Corollary \ref{DispersiveCorollary} with the retarded Strichartz estimate, obtained the Theorem \ref{fBoundedProposition}.
		\end{remark}
		
		
		\begin{figure}
			\tikzstyle{level 1}=[level distance=18mm, sibling distance=40mm]
			\tikzstyle{level 2}=[level distance=22mm, sibling distance=30mm]
			\tikzstyle{level 3}=[level distance=28mm]
			\begin{tikzpicture}[grow=right,->,>=angle 60]
				\node {{\bf Assumptions \ref{AssumptionLambda} \& \ref{AssumptionParameters}}}
				child {node {{$\frac{1}{2}<\alpha\leq 1$}}
					child {node {{$f$}}
						child[-] {node{{\bf Assumption \ref{AssumptionF}}}}  
					}
					child {node{{$(u_0,u_1)$}}
						child[-] {node{{$\dot{W}^{\gamma,p}\times \dot{W}^{\gamma-{\frac{\sigma}{\alpha}},p}$} data}}  
					}
				}
				child {node {{$0<\alpha\leq \frac{1}{2}$}}
					child {node{{$f$}}
						child[-] {node{{\bf Assumption \ref{AssumptionF}}}}  
					}
					child {node{{$u_0$}}
						child[-] {node{{$\dot{W}^{\gamma-\frac{\sigma}{\alpha},p}$} datum}}  
					}
				};
			\end{tikzpicture}
			\caption{Binary tree diagram sketch of the set of main assumptions on required on {\bf Theorem \ref{Theorem1Strichartz}} and {\bf Theorem \ref{Theorem2Strichartz}}.}
		\end{figure}
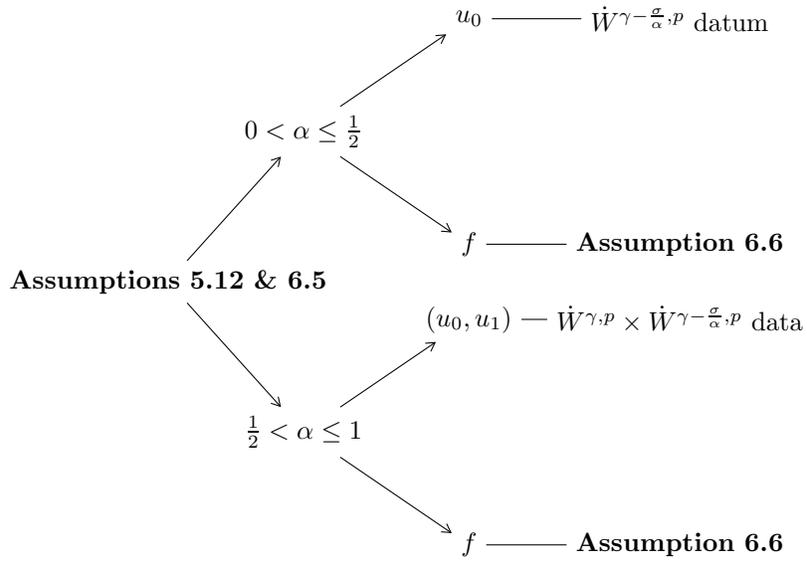
		
		
		\begin{figure}
			\begin{tikzpicture}[scale=8]
				\tkzInit[xmin=0, xmax=.70, ymin=0, ymax=.70]
				\tkzDrawX[noticks, label=\(\sigma\)]
				\tkzDrawY[noticks, label=\( \gamma\)]
				\tkzDefPoints{0/0/O}\tkzLabelPoints[below left](O)
				\tkzDefPoints{	0.5/0/A,
					0.75/.5/B,
					0.75/1/C,
					0.5/0.75/D,
					0/.5/E,
					.5/.5/F
				}
				
				\tkzDefPoint(0,1){M}
				\tkzDefPoint(0.75,0){Q}
				
				\tkzDefPoint(0.5,0.5){N}
				
				\tkzDefPoint(1,0.5){B'}
				\tkzDefPoint(1,1){N'}
				\tkzDefPoint(1,0){P'}
				
				\tkzDefPointWith[colinear=at M](O,C)\tkzGetPoint{aux}

				
				\tkzInterLL(M,aux)(B,C)\tkzGetPoint{M'}
				\tkzInterLL(M,M')(A,D)\tkzGetPoint{I2}		
				
				\tkzInterLL(O,C)(M,P')\tkzGetPoint{I3}
				\tkzInterLL(O,C)(E,N)\tkzGetPoint{I4}
				
				\tkzInterLL(O,N)(E,A)\tkzGetPoint{I5}
				\tkzInterLL(O,B')(E,A)\tkzGetPoint{I6}
				
				\tkzPointShowCoord[xlabel=\(n\alpha\), ylabel=\(n\), xstyle={below=2mm},ystyle={left=1mm}](C)


				
				\tkzPointShowCoord[xlabel=\(\frac{n\alpha}{1+\alpha}\),ylabel=\(\frac{n}{1+\alpha}\), xstyle={below=2mm},ystyle={left=1mm}](I3)
				
				\tkzPointShowCoord[xlabel=\(n\), xstyle={below=2mm},ystyle={left=1mm}](P')
				
				\tkzDrawSegments(I3,C)
				\tkzDrawSegments[dotted](I3,M)
				
				
				\tkzDrawPoints[fill=white](M,C,I3)
				

				\begin{scope}[on background layer]
					\tkzFillPolygon[gray!70](I3,C,M)
				\end{scope}
			\end{tikzpicture}
			\caption{Region $\mathcal{S}_{0}(\alpha)$ ($\frac{1}{2}<\alpha<1$) adopted on {\bf Assumption \ref{AssumptionParameters}}.}\label{StrichartzFigure1} 
		\end{figure}
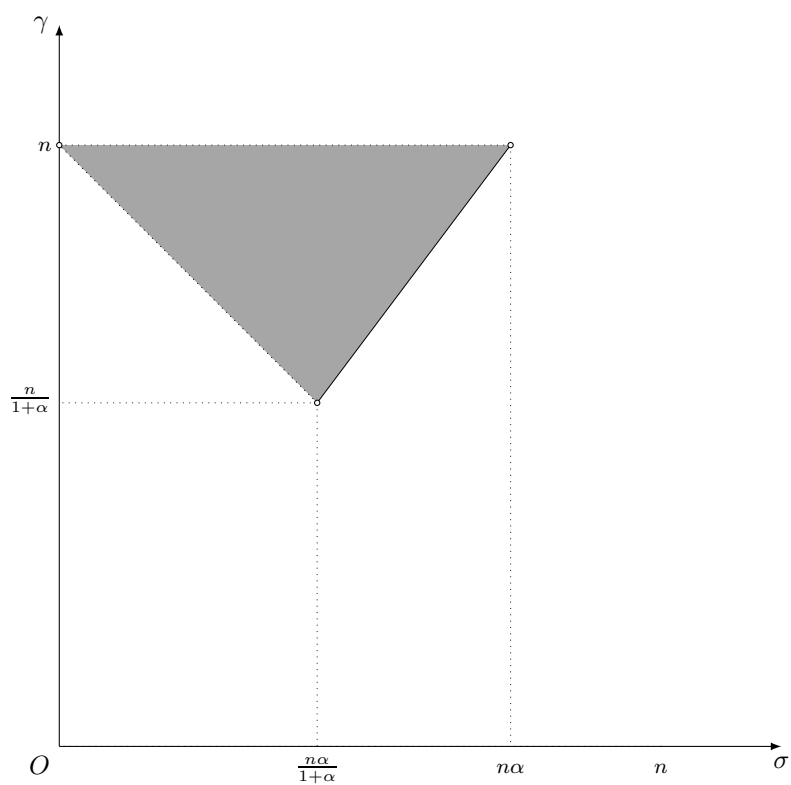

		\begin{figure}
			\begin{tikzpicture}[scale=8]
				\tkzInit[xmin=0, xmax=.70, ymin=0, ymax=.70]
				\tkzDrawX[noticks, label=\(\sigma\)]
				\tkzDrawY[noticks, label=\( \gamma\)]
				\tkzDefPoints{0/0/O}\tkzLabelPoints[below left](O)
				\tkzDefPoints{	0.5/0/A,
					0.75/.5/B,
					0.75/1/C,
					0.5/0.75/D,
					0/.5/E,
					.5/.5/F
				}
				
				\tkzDefPoint(0,1){M}
				\tkzDefPoint(0.75,0){Q}
				
				\tkzDefPoint(0.5,0.5){N}
				
				\tkzDefPoint(1,0.5){B'}
				\tkzDefPoint(1,1){N'}
				\tkzDefPoint(1,0){P'}
				
				\tkzDefPointWith[colinear=at M](O,C)\tkzGetPoint{aux}

				
				\tkzInterLL(M,aux)(B,C)\tkzGetPoint{M'}
				\tkzInterLL(M,M')(A,D)\tkzGetPoint{I2}		
				
				\tkzInterLL(O,C)(E,A)\tkzGetPoint{I3}
				\tkzInterLL(O,C)(E,N)\tkzGetPoint{I4}
				
				\tkzInterLL(O,N)(E,A)\tkzGetPoint{I5}
				\tkzInterLL(O,B')(E,A)\tkzGetPoint{I6}
				
				\tkzPointShowCoord[xlabel=\(n\alpha\), ylabel=\(n\), xstyle={below=2mm},ystyle={left=1mm}](C)

				\tkzPointShowCoord[noxdraw,noydraw,xlabel=\(\frac{n}{2}\), ylabel=\(\frac{n}{2}\), xstyle={below=2mm},ystyle={left=1mm}](N)

				\tkzPointShowCoord[xlabel=\(\frac{n\alpha}{2(1+\alpha)}\), ylabel=\(\frac{n}{2(1+\alpha)}\), xstyle={below=2mm},ystyle={left=1mm}](I3)
				
				\tkzPointShowCoord[xlabel=\(\frac{n\alpha}{2}\), xstyle={below=2mm},ystyle={left=1mm}](I4)
				
				\tkzPointShowCoord[xlabel=\(n\), xstyle={below=2mm},ystyle={left=1mm}](P')
				
				\tkzDrawSegments(I3,C)
				\tkzDrawSegment[dotted](M,N')
				\tkzDrawSegment[dotted](P',N')
				\tkzDrawSegment[dotted](E,I3)
				
				\tkzDrawSegment[dotted](Q,C)
				
				\tkzDrawPoints[fill=white](M,C,I3,E)
				

				\begin{scope}[on background layer]
					\tkzFillPolygon[gray!70](I3,I4,E)
					\tkzFillPolygon[black!20!white](E,I4,C,M)
				\end{scope}
			\end{tikzpicture}
			\caption{Regions $\mathcal{S}_{1,2}(\alpha)$ and $\mathcal{S}_{3}(\alpha)$ ($\frac{1}{2}<\alpha<1$) adopted on {\bf Assumption \ref{AssumptionParameters}}. $\mathcal{S}_{1,2}(\alpha)$ corresponds to the dark grey region whereas $\mathcal{S}_3(\alpha)$ corresponds to union of the light grey region with the straight line $\displaystyle \left\{\left(\sigma,\frac{n}{2}\right)~:~0<\sigma\leq\frac{n\alpha}{2}\right\}.$}\label{StrichartzFigure2} 
		\end{figure}
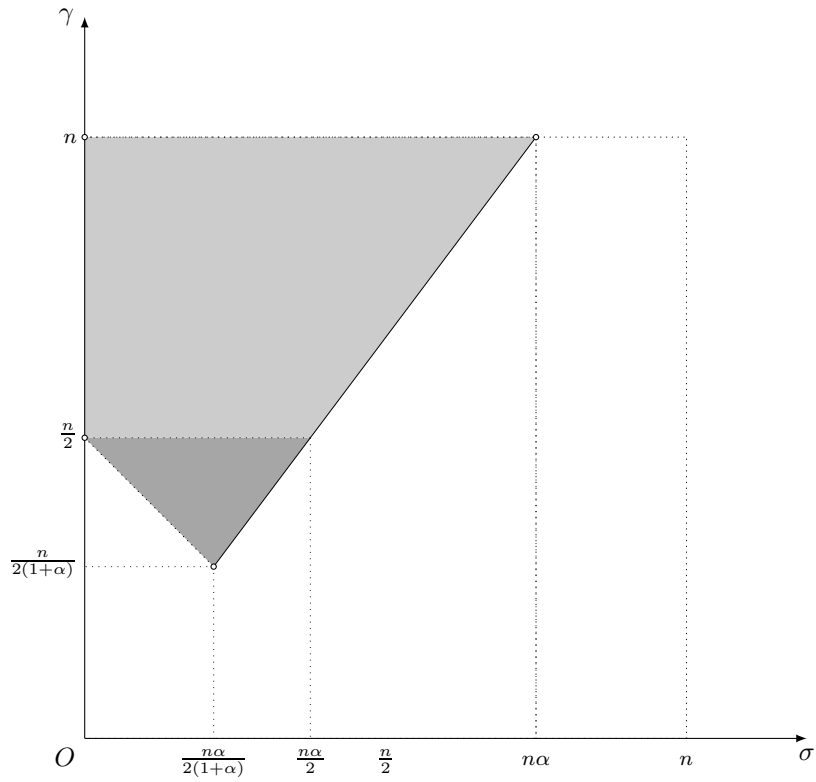

		\newpage
		~
		\newpage
		\section{Conclusions}\label{Conclusions}
		
		\subsection{Summary of results}
		
		We have investigated questions related to dispersive and Strichartz estimates for the {\bf Cauchy Problem \ref{CP_parabolic}} and the {\bf Cauchy Problem \ref{CP_hyperbolic}} from their closed-form representations.
		
		The framework developed in Section \ref{DispersiveSection} constitute a significant part of the paper. Along this we have pinned a general scheme to achieve the optimality of the decay estimates, independently of the dimension of the ambient space $\BR^n$. As a consequence, the main estimates collected in Corollary \ref{DispersiveCorollary}, permits us to treat $L^p-L^q$ and $L^p-\dot{W}^{\sigma,q}$ decay estimates, at least, in lower dimensions with respect to $\sigma$ (see also Proposition \ref{YoungIneqProposition}). That also includes $L^1-L^\infty$ and $L^1-\dot{W}^{\sigma,\infty}$ decay estimates, although we are unable to claim the sharpness of these estimates, for the reasons explained throughout Subsection \ref{ProofStrategy}.
		
		Note also that in case where such estimates are not possible to reach -- such as the wave equation with structural damping highlighted on the beginning of the paper \cite{CCD08} -- our approach offers the possibility to look instead for $\dot{W}^{\gamma,p}-L^q$ and $\dot{W}^{\gamma,p}-\dot{W}^{\sigma,q}$ decay estimates, whose datum is a function with membership on the {\it homogeneous Sobolev space} $\dot{W}^{\gamma,p}(\BR^n)$ (see also Proposition \ref{SobolevYoungProposition}).
		
		In Section \ref{MainSection}, we have used the estimates obtained in Section \ref{DispersiveSection} to prove first, in Theorem \ref{fBoundedProposition}, a {retarded Strichartz estimate} for the inhomogeneous part of the {\bf Cauchy Problem \ref{CP_parabolic}} \& {\bf Cauchy Problem \ref{CP_hyperbolic}}.  Afterwards, by combining Corollary \ref{DispersiveCorollary} and Theorem \ref{fBoundedProposition}, we were able to prove, in {Theorem \ref{Theorem1Strichartz}} and {Theorem \ref{Theorem2Strichartz}, a primer version of Strichartz estimates to both Cauchy problems.
			
			
			
			Regarding {Theorem \ref{fBoundedProposition}}, it is worth to observe that its proof turned out to be rather technical. Arguably, we have adopted in {\bf Assumption \ref{AssumptionF}} the most strongest regularity and decay properties on  $\displaystyle f:[0,T]\times \BR^n\rightarrow\BR$ with the aim achieve a similar version of the so-called {\it Christ-Kiselev lemma} (cf.~\cite[Lemma 2.4]{Tao06}). Amongst the many still open questions, obtaining a well-adapted version of \cite[THEOREM 1.2.]{KTao98}~to our case is indeed an interesting issue to be deepened in forthcoming research papers.

			In comparison with other approaches, our major contribution stems, firstly, on the possibility of proving decay estimates for both Cauchy problems without studying the asymptotic behaviour of the fundamental solution. In particular, we note that the fundamental solutions encoded by higher dimensional fractional differential equations -- that is, the convolution kernels that yield from the {\it Fourier convolution formula} -- are mostly Fox-H functions (cf.~\cite[Subsection 1.12]{KST06}). And, as it was highlighted on the paper \cite{KSZ17}, obtaining optimal decay estimates for it revealed to be a non-trivial task.
			
			Also, we notice that the study of the size and the decay of the solutions of Cauchy problems of heat type, possessing memory terms, has been extensively studied during the last years (see e.g. \cite{KSVZ16,CQW21} and the references given there). And prior to the study of dispersive estimates, there seems that the study of Strichartz estimates for the solutions of Cauchy problems, similar to {\bf Cauchy Problem \ref{CP_parabolic}} \& {\bf Cauchy Problem \ref{CP_hyperbolic}}, has not yet been fully addressed, up to authors's knowledge. The present paper aims also to fill this gap.
			\newpage

			\subsection{Further remarks and open problems}
			
			The wide class of space-time-fractional operators, considered on the formulation of {\bf Cauchy Problem \ref{CP_parabolic}} \& {\bf Cauchy Problem \ref{CP_hyperbolic}}, correspond to a borderline case of the following operator:
			\begin{eqnarray}
				\label{SpaceTimeFractional}	\partial_t^{\gamma}+\mu (-\Delta)^\theta \partial_t^\alpha +(-\Delta)^\sigma,&\mbox{with}~0<\theta\leq \sigma,~~ 0<\alpha\leq 1,~~ \alpha<\gamma \leq 2. 
			\end{eqnarray}
			
			For such general class of operators, Luchko-Gorenflo's operational approach considered in \cite{luchko1999operational} provides us a general framework to determine, in a similar fashion, closed-form representations to the ones obtained in Theorem \ref{SolutionCP_parabolic} and Theorem \ref{SolutionCP_hyperbolic}. On this case, the convolution kernels turned out be described in terms of {\it multi-index Mittag-Leffler functions} (see~\cite[Theorem 4.1]{luchko1999operational} and \cite[eq.~(46)]{luchko1999operational}). 
			
			At this point, it should be stressed that the choice of $\gamma=2\alpha$ on eq. (\ref{SpaceTimeFractional}), already considered e.g. in \cite{OrsingherBeghin04,LVaz20} for the time-fractional telegraph equation, provides us a fairly accurate time-fractional counterpart of the $\sigma-$models investigated in depth on the series of papers \cite{PMR15,AE17,AE_21,AE22}. Up to our knowledge, for such class of multi-index functions the monotonicity and decay properties are not very well-known on the literature (cf.~\cite[Chapter 6]{GKMaiR14}) so that one cannot embody it in our scheme of proof to obtain dispersive estimates and to plan tangible applications towards relaxation theory as well, as tactically explained in Section \ref{Discussion}.
			Therefore, for the general case the following interesting open questions remain:
			\begin{enumerate}
				\item[\bf Q1:] Is it possible to achieve endpoint decay estimates for the {\it multi-index Mittag-Leffler functions}? At least, using an asymptotic scheme similar to the one considered in \cite{KSZ17}?
				\item[\bf Q2:] Under which choices of $\gamma$ and $\alpha$ are possible to achieve dispersive estimates, similar to those obtained in Section \ref{DispersiveSection}?
			\end{enumerate}
			
			For the choice $\gamma=2\alpha$, it seems likely that our results can be extended straightforwardly for general choices of $0<\theta\leq \sigma$, since with the aid of eq.~(\ref{LaplaceIdentityMittagLeffler}) we are able to say, from its solution representation, that one needs to study, in most of the cases, the decay of the Fourier multipliers of the type 
			\begin{eqnarray*}
				\displaystyle \frac{1}{\lambda_+(\xi)-\lambda_-(\xi)}~\frac{E_{\alpha,1}\left(-|\xi|^{\sigma}\lambda_\pm(\xi)~t^{\alpha}\right)}{\lambda_\pm(\xi)},& \displaystyle \frac{\lambda_\pm(\xi)~t^{\beta-1}~E_{\alpha,\beta}\left(-|\xi|^{\sigma}\lambda_\pm(\xi)~t^{\alpha}\right) }{\lambda_+(\xi)-\lambda_-(\xi)},
			\end{eqnarray*} ($\lambda_+(\xi)\neq \lambda_-(\xi)$) and $t^{\beta-1}E_{\alpha,\beta}^2(-|\xi|^\sigma t^\alpha)$ ($\lambda_+(\xi)= \lambda_-(\xi)$), whereby
			\begin{eqnarray*}
				\lambda_{\pm}(\xi) =\begin{cases}
					\displaystyle \frac{\mu}{2}|\xi|^{2\theta-\sigma}\pm i\sqrt{1-\frac{\mu^2}{4}|\xi|^{4\theta-2\sigma}}&, 0< \mu<2|\xi|^{\sigma-2\theta} \\ \ \\
					\displaystyle \frac{\mu}{2}|\xi|^{2\theta-\sigma}\pm \sqrt{\frac{\mu^2}{4}|\xi|^{4\theta-2\sigma}-1} &, \mu\geq 2|\xi|^{\sigma-2\theta}. 
				\end{cases}
			\end{eqnarray*}

			In case where $\theta=\dfrac{\sigma}{2}$ we are in conditions to claim that our results are enough to cover the so-called critical case (i.e. when $\lambda_\pm(\xi)$ are constants), but we cannot yet claim it for the remaining cases 
			$\displaystyle 0<\theta<\frac{\sigma}{2}$ and $\displaystyle \frac{\sigma}{2}<\theta\leq \sigma$,
			already treated by Pham et al. in \cite{PMR15} for values of $\alpha=\beta=1$. The main gap to be to be circumvented is the fact that the terms
			\begin{eqnarray*}
				\dfrac{1}{\lambda_+(\xi)-\lambda_-(\xi)} &\mbox{and}&
				\dfrac{\lambda_\pm(\xi)}{\lambda_+(\xi)-\lambda_-(\xi)}
			\end{eqnarray*}
			and unbounded for values of $\displaystyle  |\xi|^{\sigma-2\theta}\approx\frac{\mu}{2}$. 
			However, based on the identity $\lambda_{+}(\xi)\cdot \lambda_-(\xi)=1$, the following question comes naturally:
			\begin{enumerate}
				\item[\bf Q3:] Can we also obtain estimates, similar to the ones reached in Section \ref{DispersiveSection}, to regions defined implicitly by 
				\begin{center}
					$\displaystyle |\lambda_{+}(\xi)-\lambda_{-}(\xi)|\geq  \varepsilon\cdot \max\left\{ \mu,\mu |\xi|^{2\theta-\sigma}\right\}$,
					for some $\varepsilon>0$?
				\end{center}
			\end{enumerate}
			
			Broadly speaking, we essentially conjecture a perturbation argument, underlying to the asymptotic behaviour of $\varepsilon:=\varepsilon(\tau)$ on small scales $0<\tau<1$: 
			\begin{center}
				$\displaystyle \varepsilon \sim \tan\left(\frac{\pi\tau}{2}\right)$ $\left(~0< \mu<2|\xi|^{\sigma-2\theta}~\right)$~~~$\wedge$~~~$\displaystyle \varepsilon \sim \tanh\left(\tau\right)$ $\left(~\mu>2|\xi|^{\sigma-2\theta}~\right)$.
			\end{center}
			
			From the background of semigroup theory, we already know from the abstract results obtained in \cite[Section VI.3.a]{engel2000one} that the application of perturbation arguments, similar to the above one, are mostly feasible in case of $\displaystyle \frac{\sigma}{2}<\theta\leq\sigma$ -- that is, for the case where the elastic operator is bounded above by the damping operator. But due to the fact that the generalizations of the framework considered in \cite{PMR15}, on which the authors already treat the case $\displaystyle \frac{\sigma}{2}<\theta\leq \sigma$, requires in our case to handle, at most, with subordination formulae for the generalized Mittag-Leffler functions $E_{\alpha,\beta}$ in terms of probability density functions defined through the so-called Wright functions (cf.~\cite{Mainardi2020}), we decided to postpone the treatment of the aforementioned remaining cases for a forthcoming paper.
			
			We conclude this section by noticing that Cordero-Zucco's approach on Strichartz estimates for the vibrating plate equation on Sobolev and modulation spaces (case of $\mu\rightarrow 0^+$; $\sigma=2$) (see~\cite{CZ11,CZ11modulation}) also falls into the framework of the class of function spaces and pseudo-differential operators investigated in depth in Section \ref{DispersiveSection} and Section \ref{MainSection}. This is also an interesting issue to be deepened in a nearly future, as well as the following broadly general questions:
			\begin{enumerate}
				\item[\bf Q4:] Employing the well-established {Littlewood-Paley's theory}, can we relax some of the results enclosed on this paper?
				\item[\bf Q5:] Can we replace the {\it homogeneous Sobolev spaces}, adopted throughout this paper, by homogeneous analogues of Besov spaces?
			\end{enumerate}
			
			\section{Acknowledgments}
			\href{http://orcid.org/0000-0002-9117-2021}{Nelson~Faustino} was supported by The Center for Research and Development in Mathematics and Applications (CIDMA) through the Portuguese Foundation for Science and Technology (FCT), references UIDB/04106/2020 and UIDP/04106/2020; \href{https://orcid.org/0000-0003-3825-6389}{Jorge~Marques} was supported by Centre for Business and Economics Research (CeBER) through the Portuguese Foundation for Science and Technology (FCT), reference UIDB/05037/2020.
			
			The authors would like to thank Antero Neves for his precious advice and help on creating TikZ code used to build some of the graphics throughout the paper. 
			Last but not least, the authors would like to thank the reviewers for carefully reading the preliminary version of the manuscript. Their remarks and suggestions allowed us to clarify several issues as well as improve some of the main results.

			\newpage 
			\appendix

			\section{Proof of Lemma \ref{LrMittagLeffler}}\label{AppendixA}
			
			\begin{proof}
				First, we recall that for $\beta=1$, one has
				$\displaystyle \frac{\Gamma(1)}{\Gamma(1+\alpha)}=\frac{1}{\Gamma(1+\alpha)}$.
				Thus, application of Theorem \ref{EstimatesMittagLeffler} \& Theorem \ref{OptimalEstimatesMittagLeffler} gives rise to  
				\begin{eqnarray}
					\label{IntegralIneq}
					\int_0^\infty\left|~\Gamma(\beta) \rho^{\eta} E_{\alpha,\beta}(-\lambda \rho^\sigma t^{\alpha})~\right|^r\rho^{n-1}d\rho= 	\int_0^\infty\left|~\Gamma(\beta) E_{\alpha,\beta}(-\lambda \rho^\sigma t^{\alpha})~\right|^r\rho^{n+r\eta-1}d\rho  \nonumber \\ \nonumber \\  \leq \begin{cases} \displaystyle \int_0^\infty \frac{\rho^{n+r\eta-1}}{\left(1+\sqrt{\frac{\Gamma(1+\alpha)}{\Gamma(1+2\alpha)}}~\lambda \rho^\sigma t^{\alpha}\right)^{2r}} d\rho &, \lambda\geq 0;~\beta=\alpha
						\\ \ \\
						\displaystyle \int_0^\infty \frac{\rho^{n+r\eta-1}}{\left(1+\frac{\Gamma(\beta)}{\Gamma(\beta+\alpha)}\lambda \rho^\sigma t^{\alpha}\right)^r} d\rho&, \lambda\geq 0;~\beta\in \{1\}\cup (\alpha,+\infty) \\ \ \\
						\displaystyle \int_0^\infty\frac{C^r|\Gamma(\beta)|^r\rho^{n+r\eta-1}}{(1+|\lambda| \rho^\sigma t^{\alpha})^r} d\rho&, \frac{\pi\alpha}{2}<\theta<\pi \alpha;~\theta\leq |\arg(\lambda)|\leq \pi,
					\end{cases}
				\end{eqnarray}
				where $C>0$ is the constant that results from Theorem \ref{EstimatesMittagLeffler}.
				
				For the case of $\eta=-s$ and $a=r$ resp. $a=2r$, the set of constraints {\bf (i)}, {\bf (ii)} and {\bf (iii)}, appearing on the statement of 
				Lemma \ref{LrMittagLeffler}, assures that
				\begin{center}
					$\displaystyle \Re(r)>\Re\left(\frac{n-rs}{\sigma}\right)>0$ resp. $\displaystyle \Re(2r)>\Re\left(\frac{n-rs}{\sigma}\right)>0$  
				\end{center}
				so that one can apply the Mellin integral identity (\ref{MellinId}) to simplify the previous set of inequalities, appearing on the right-hand side of (\ref{IntegralIneq}). Namely, one has 
				\begin{eqnarray*}	\left|~\Gamma(\beta)~\right|^r\int_0^\infty\left|~ E_{\alpha,\beta}(-\lambda \rho^\sigma t^{\alpha})~\right|^r\rho^{n-rs-1}d\rho \leq C_{r,\sigma,n}^{(s)}(\alpha,\beta,\lambda) t^{-\frac{\alpha}{\sigma}\left({n-rs}\right)}
				\end{eqnarray*}
				\begin{eqnarray*}
					=\begin{cases} \displaystyle \frac{\Gamma\left(\frac{ n-rs}{\sigma}\right)\Gamma\left(\frac{r(2\sigma+s)-n}{\sigma}\right)}{\sigma~\Gamma(2r)}\left(\sqrt{\frac{\Gamma(1+\alpha)}{\Gamma(1+2\alpha)}}~\lambda  t^{\alpha}\right)^{-\frac{n-rs}{\sigma}} \displaystyle,~~~~\lambda\geq 0~;~\beta=\alpha
						\\ \ \\
						\displaystyle \frac{\Gamma\left(\frac{ n-rs}{\sigma}\right)\Gamma\left(\frac{r(\sigma+s)-n}{\sigma}\right)}{\sigma~\Gamma(r)}~\left(\frac{\Gamma(\beta)}{\Gamma(\beta+\alpha)}~\lambda t^{\alpha}\right)^{-\frac{n-rs}{\sigma}} ~~~~,\lambda\geq 0~;~\beta\in \{1\}\cup (\alpha,+\infty) \\ \ \\
						\displaystyle \frac{C^r~|\Gamma(\beta)|^r\Gamma\left(\frac{ n-rs}{\sigma}\right)\Gamma\left(\frac{r(\sigma+s)-n}{\sigma}\right)}{\sigma~\Gamma(r)}\left(~|\lambda| t^{\alpha}\right)^{-\frac{n-rs}{\sigma}},~\frac{\pi\alpha}{2}<\theta<\pi \alpha~~;~~\theta\leq |\arg(\lambda)|\leq \pi,
					\end{cases}
				\end{eqnarray*}
				where $C_{r,\sigma,n}^{(s)}(\alpha,\beta,\lambda)$ is the constant defined on the statement of Lemma \ref{LrMittagLeffler}.
				
				Hence, from the monotonicity of the norm underlying to the weighted Lebesgue space $L^r((0,\infty),\rho^{n-1}d\rho)$, there holds that the previous inequality is equivalent to (\ref{IneqC}), as desired.
			\end{proof}

			\section{Proof of Lemma \ref{LrMittagLeffler2}}\label{AppendixB}
			
			\begin{proof}
			We recall first that the set of constraints {\bf (i)}, {\bf (ii)} and {\bf (iii)}, imposed on the statement of Lemma \ref{LrMittagLeffler2}, 
			assures that $$\displaystyle \Re(r)>\Re\left(\frac{n-rs}{\sigma}\right)>0$$
			so that the Mellin integral formula (\ref{MellinId2}) can be indeed applied.
			
			Thereby, by setting $\eta=\sigma-s$ on both sides of eq.~(\ref{IntegralIneq}), the inequality 
			\begin{eqnarray*}
				\label{IntegralIneq2}
				\int_0^\infty\left|~\Gamma(\alpha) \rho^{\sigma-s} E_{\alpha,\alpha}(-\lambda \rho^\sigma t^{\alpha})~\right|^r\rho^{n-1}d\rho ~\leq \nonumber \\ \nonumber \\  \leq  \displaystyle \int_0^\infty \frac{\rho^{n+r(\sigma-s)-1}}{\left(1+\sqrt{\frac{\Gamma(1+\alpha)}{\Gamma(1+2\alpha)}}~\lambda \rho^\sigma t^{\alpha}\right)^{2r}} d\rho~~  (  \lambda\geq 0).
			\end{eqnarray*}
			is thus immediate.
			
			Furthermore, by direct application of eq.~(\ref{MellinId2}) we end up with
			\begin{eqnarray*}	\left(~\Gamma(\alpha)~\right)^r\int_0^\infty\left|~\rho^{\sigma-s} E_{\alpha,\alpha}(-\lambda \rho^\sigma t^{\alpha})~\right|^r\rho^{n-1}d\rho \leq D_{r,\sigma,n}^{(s)}(\alpha,\lambda)~ t^{-\frac{\alpha}{\sigma}\left({n+r(\sigma-s)}\right)}
			\end{eqnarray*}
			\begin{eqnarray*}
				= \displaystyle \frac{\Gamma\left(\frac{n+r(\sigma-s)
					}{\sigma}\right)\Gamma\left(\frac{r(\sigma+s)-n}{\sigma}\right)}{\sigma~\Gamma(2r)}\left(\sqrt{\frac{\Gamma(1+\alpha)}{\Gamma(1+2\alpha)}}~\lambda  t^{\alpha}\right)^{-\frac{n+r(\sigma-s)}{\sigma}} \displaystyle~~(\lambda\geq 0)
			\end{eqnarray*}
			and hence with (\ref{IneqD}), after few algebraic manipulations.
		\end{proof}
			
			\section{Proof of Corollary \ref{DispersiveCorollary}}\label{AppendixC}
			
			Firstly, we recall that the {\bf Assumption \ref{AssumptionF}} for $\lambda\in \mathbb{C}$ results from the closed formulae for $C_{r,\sigma,n}^{(s)}(\alpha,\beta,\lambda)$ and $D_{r,\sigma,n}^{(s)}(\alpha,\lambda)$ ($s=0$ or $s=\gamma$), obtained in Lemma \ref{LrMittagLeffler} and Lemma \ref{LrMittagLeffler2}.
			From now on, let us assume that $\displaystyle \frac{1}{p}+\frac{1}{r'}=\frac{1}{q}+1$ and $\displaystyle \frac{1}{r}+\frac{1}{r'}=1$ hold for every $1\leq p,q,r,r'\leq \infty$.

			\paragraph{\bf Proof of (1)}
			Note that $\displaystyle \frac{1}{r}=\frac{1}{p}-\frac{1}{q}$. Then, under the conditions of Proposition \ref{YoungIneqProposition}, the set of estimates (\ref{CaseIEstimates}) are thus immediate from the substitution $\displaystyle \frac{n}{r}=\frac{n}{p}-\frac{n}{q}$ on the right hand of the inequalities (\ref{YoungIneqHankel}) and (\ref{YoungIneqHankel2}), respectively.
			
			Moreover, the condition $(p,q)=(1,\infty)$ is fulfilled for $r=1$, whereas the condition $(p,q)\in \mathcal{R}_{1,2}(\sigma)$ -- see eq.~(\ref{Ogamma12}) -- results from the set of equivalences
			\begin{eqnarray*}
				\max\left\{1,\frac{n}{\sigma}\right\}< r\leq 2 &\Longleftrightarrow & \displaystyle \frac{n}{2} \leq  \frac{n}{r}< {\min\left\{n,\sigma\right\}} \\ &\Longleftrightarrow&\displaystyle \frac{n}{2} \leq \frac{n}{p} -\frac{n}{q} < {\min\left\{n,\sigma\right\}}   \\
				&\Longleftrightarrow&\frac{n}{p}-{\min\left\{n,\sigma\right\}}\displaystyle  < \frac{n}{q} \leq \frac{n}{p}-\frac{n}{2}
			\end{eqnarray*}
			and from the constraints $\displaystyle 0\leq \frac{n}{p}\leq n$ and $\displaystyle 0\leq \frac{n}{q}\leq n$.

			\paragraph{\bf Proof of  (2)}
			The proof of the set of estimates (\ref{CaseIVIIIEstimates}) results from the substitution $\displaystyle \frac{n}{r}-\gamma=\frac{n}{p}-\frac{n}{q}-\gamma$ on the right hand of the inequalities (\ref{NoSobolevYoungIneqHankel}) and (\ref{NoSobolevYoungIneqHankel2}), respectively, whereas the conditions {\bf (i)}, {\bf (ii)} and {\bf (iii)} follow straightforwardly by the set of equivalences:
			\begin{itemize}
				\item[\bf (i)] {\bf $\dot{W}^{\gamma,1}-L^\infty$ \& $\dot{W}^{\gamma,1}-\dot{W}^{\sigma,\infty}$ estimates (first case):} 
				\begin{eqnarray*}
					r=1 &\Longleftrightarrow&(p,q)=(1,\infty);
				\end{eqnarray*}
				\begin{eqnarray*}
					\sigma+\gamma>n~~\wedge~~0<\gamma<n&\Longleftrightarrow&  n-\sigma<\gamma<n.
				\end{eqnarray*}
				\item[\bf (ii)] {\bf $\dot{W}^{\gamma,p}-L^p$ \& $\dot{W}^{\gamma,p}-\dot{W}^{\sigma,q}$ estimates (second case):} 
				\begin{eqnarray*}			
					\max\left\{1,\frac{n}{\sigma+\gamma}\right\}< r\leq 2 \Longleftrightarrow & \displaystyle \frac{n}{p}-{\min\left\{n,\sigma+\gamma\right\}}\displaystyle  < \frac{n}{q} \leq \frac{n}{p}-\frac{n}{2}.
				\end{eqnarray*}
				\begin{eqnarray*}
					\sigma+\gamma>\frac{n}{2}~~\wedge~~0<\gamma<\frac{n}{2}&\Longleftrightarrow&  \frac{n}{2}-\sigma<\gamma<\frac{n}{2}.
				\end{eqnarray*}
				
				\item[\bf (iii)] {\bf $\dot{W}^{\gamma,p}-L^p$ \& $\dot{W}^{\gamma,p}-\dot{W}^{\sigma,q}$ estimates (third case):} 
				\begin{eqnarray*}
					\max\left\{1,\frac{n}{\sigma+\gamma}\right\}< r\leq\frac{n}{\gamma} \Longleftrightarrow &\displaystyle \frac{n}{p}-{\min\left\{n,{\sigma+\gamma}\right\}}\displaystyle  < \frac{n}{q} \leq \frac{n}{p}-\gamma.
				\end{eqnarray*}
				\begin{eqnarray*}
					\sigma+\gamma>\frac{n}{2}~~\wedge~~\frac{n}{2}\leq\gamma<n&\Longleftrightarrow&  \frac{n}{2}\leq\gamma<n.
				\end{eqnarray*}
			\end{itemize}

\newpage

\end{document}